\documentclass[11pt]{article}
\usepackage{csquotes}
\usepackage{amsmath,amsthm,mathtools}
\usepackage{amsfonts}
\usepackage{amssymb}
\usepackage{graphicx}
\theoremstyle{plain} 
\newtheorem{theorem}{Theorem}[section]
\newtheorem{lemma}{Lemma}[section]
\newtheorem{coro}{Corollary}[section]
\newtheorem{prop}{Proposition}[section]

\theoremstyle{definition} 
\newtheorem{definition}{Definition}[section]
\newtheorem{assumption}{Assumption}[section]

\theoremstyle{remark} 
\newtheorem{remark}{Remark}[section]
\newtheorem{example}{Example}[section]
\numberwithin{equation}{section}

\usepackage{tikz}
\usepackage{tikz-3dplot}
\usepackage{amsmath}
\usepackage{graphicx}
\usepackage{caption}
\usepackage{subfig}

\usetikzlibrary{calc}

\usepackage{algorithm}
\usepackage{algorithmicx}
\usepackage{algpseudocode}


\usepackage{booktabs, float, multicol, multirow, diagbox} 
\usepackage{color} 

\numberwithin{table}{section}
\numberwithin{figure}{section}
\date{}

\begin{document}
	\makeatother
	\title{The Geometry of Admissible Short Selling in Discrete-Time Stochastic Portfolio Theory
		\thanks{Corresponding author.Email: xcui@nju.edu.cn\\
			Xiaojun Cui and Jilong Xu are supported by the National Natural Science Foundation
			of China (Grant No. 12171234), the Project Funded by the Priority Academic Program
			Development of Jiangsu Higher Education Institutions (PAPD) and the Fundamental
			Research Funds for the Central Universities.}
		\author{Jilong Xu$^a$, Xiaojun Cui$^{b,*}$\\
			\small\em $^a$ School of Mathematics, Nanjing University, Nanjing {\rm 210093}, P. R. China\\
			\small\em $^b$ School of Mathematics, Nanjing University, Nanjing {\rm 210093}, P. R. China\\
			}}
	\maketitle
	\begin{abstract}
		While discrete-time Stochastic Portfolio Theory (SPT) provides a robust framework for market analysis, existing work on functional generation has predominantly focused on long-only portfolios defined on the entire unit simplex. This paper extends the geometric framework of functional generation to the broader class of \emph{bankruptcy-proof long-short portfolios} defined on local market state spaces. We establish that, within this admissible setting, pseudo-arbitrage is fully characterized by the concavity of the generating function on the market state space, thereby relaxing the usual global domain requirement.
		
		A central contribution of this work is a geometric characterization of the short-selling mechanism. We prove that the presence of short selling is equivalent to the negativity of the \emph{maximal concave extension} of the generating potential. This phenomenon is linked to the steepness of the logarithmic gradient as the market approaches a zero boundary nested inside the simplex. To systematically exploit this mechanism, we introduce the \emph{barycentric scaling transformation}, a constructive methodology that maps classical long-only generating functions onto restricted domains to engineer admissible strategies with controlled short-selling exposure.
		
		Finally, through the analysis of specific shrunken portfolios, we identify a \emph{geometric phase transition}: under suitable boundary conditions, admissible strategies exhibit a long-only core and a short-selling region in a qualitative sense (without asserting an exact partition of the state space). This provides a unified geometric perspective on relative arbitrage beyond the long-only constraint.
		
		\noindent \textbf{Keywords:} Stochastic Portfolio Theory, Discrete-time Analysis, Short Selling, Relative Arbitrage, Maximal Concave Extension, Barycentric Scaling
	\end{abstract} 
	
	\bigskip
	\section{Introduction}
	
	\subsection{Background}
	In an equity market with $n\ge 2$ stocks, we consider self-financing trading strategies that are fully invested in the sense that the portfolio weights sum to one at all times. Short selling is permitted, while borrowing from or lending to a money market account is prohibited. The natural state space for (long-only) market weights is the open unit simplex in $\mathbb{R}^n$,
	\[
	\Delta^{(n)}=\left\{p=(p_1,\ldots,p_n)\in\mathbb{R}^n:\ p_i>0,\ \sum_{i=1}^n p_i=1\right\}.
	\]
	Let $H=\left\{p\in\mathbb{R}^n:\ \sum_{i=1}^n p_i=1\right\}$ be the affine hyperplane containing $\Delta^{(n)}$.
	
	\paragraph{Convention.}
	Throughout, all topological and convex-analytic notions on $\Delta^{(n)}$ (e.g., open/closed sets, closure, boundary, continuity, concavity, directional derivatives, and superdifferentials) are understood \emph{relative to the affine space} $H$. Equivalently, we work in the translated linear space
	\[
	T:=\left\{x\in\mathbb{R}^n:\ \sum_{i=1}^n x_i=0\right\},
	\]
	and whenever a supergradient is used, we identify it with its representative in $T$ (i.e., we impose the normalization $\sum_{i=1}^n x_i=0$).
	
	In this context, a \emph{long-short portfolio} is represented by a vector in $H$. If the portfolio vector lies in the closure  $\overline{\Delta^{(n)}}$ of $\Delta^{(n)}$, then it is \emph{long-only}. A portfolio with at least one negative weight (that is, a vector in $H\setminus\overline{\Delta^{(n)}}$) is said to involve \emph{short selling}. The main purpose of this paper is to investigate the geometric structure and the relative arbitrage properties of admissible strategies that allow short positions.
	
	Following the discrete-time framework of \cite{Wong2015OptimizationofRelativeArbitrage}, we model the market by a sequence of market weight vectors $\{\mu(t)\}_{t=0}^{\infty}\subset \Delta^{(n)}$. For each $t\in\mathbb{N}$ and each stock $i\in\{1,\ldots,n\}$, the market weight $\mu_i(t)$ is defined in terms of the capitalization process $X_i(t)>0$ as
	\begin{equation}
		\label{eq:MarketPortfolio}
		\mu_i(t)=\frac{X_i(t)}{\sum_{j=1}^n X_j(t)}.
	\end{equation}
	The vector $\mu(t)=(\mu_1(t),\ldots,\mu_n(t))$ corresponds to the \emph{market portfolio}, which serves as the benchmark strategy.
	
	Stochastic Portfolio Theory (SPT), as consolidated in the monograph of Fernholz \cite{Fernholz1998PortfoliogeneratingFunctions,Fernholz2002SPT}, was originally developed as a descriptive framework for studying the structure and macroscopic properties of equity markets; see, for example, \cite{Campbell2025MacroscopicProperties}. Much of the early literature therefore focused on long-only portfolios, reflecting the capitalization-weighted construction of standard equity indices. However, long-only constraints are not intrinsic to the functional generation methodology. In particular, the ``decomposition equation'', which links relative portfolio performance to a drift term associated with a generating function, holds under more general conditions.
	
	Karatzas and Ruf \cite{Karatzas2017Lyapunov} introduced \emph{additive functional generation} of trading strategies by interpreting generating functions as Lyapunov functions of the market weight process. This approach accommodates strategies with short positions; to preclude bankruptcy, one typically imposes a positivity constraint on the (relative) value process . 
	Building on this Lyapunov framework, Fernholz, Karatzas and Ruf \cite{Fernholz2018VolatilityandArbitrage} characterized the relationship between market volatility and relative arbitrage.
	In conjunction with the additive generation framework of Karatzas and Ruf \cite{Karatzas2017Lyapunov}, which allows trading strategies with short positions, this provides a natural setting for understanding how volatility can be converted into relative performance beyond the long-only class.
	More recently, functional generation has been extended to certain path-dependent functionals via signature methods \cite{Cuchiero2023SignatureMethods}, further illustrating the breadth of the framework.
	
	In discrete time, Wong \cite{Wong2019InfromationGeometryinPortfolio} generalized the additive generation framework of Karatzas and Ruf to a geometric setting that permits both long and short positions. Without additional constraints, the corresponding portfolio value process need not remain strictly positive. Despite this general formulation, much of the subsequent literature continues to impose long-only constraints when studying optimization and relative arbitrage. 
	For instance, in their work on functional portfolio optimization, Campbell and Wong \cite{Wong2023FunctionalPortfolioOptimization} focus on multiplicatively generated long-only portfolios. This formulation leaves open how the functional portfolio optimization framework might be extended to allow short positions. In this regard, long-only frameworks, such as the geometric characterization of relative arbitrage by Pal and Wong \cite{Wong2016TheGeometryofRelativeArbitrage}, may provide useful starting points for developing admissible long-short portfolios.
	
	In the long-only setting, the optimal transport formulation in \cite{Wong2016TheGeometryofRelativeArbitrage} can be viewed as inducing a dual (primal/dual) coordinate system on the simplex. Pal and Wong \cite{Pal2018ExponentiallyCF} developed the associated geometric framework; see also the \emph{$L$-divergence} \cite{Wong2021ProjectionswithLogarithmicDivergences,Wong2019LogarithmicDivergencesCurvature}, which together lead to a geometry in which portfolio rebalancing can be described via geodesic structures and their geometric relationships. This perspective also underpins applications to information geometry \cite{Wong2018LogarithmicDivergencesOptimalTransport} and the development of conformal mirror descent for continuous-time optimization \cite{Kainth2024ConformalMirrorDescentwithLogarithmicDivergences}. Extending this line of work, Kainth et al.\ \cite{Kainth2025BregmanWassersteinDivergence} established the Bregman--Wasserstein geometry, which lifts finite-dimensional Bregman divergence geometry to the space of probability distributions, thereby connecting finite-dimensional divergence geometry with infinite-dimensional optimal transport.
	
	While more advanced differential geometric aspects (such as dual connections and infinite-dimensional manifold structures) are beyond the scope of this paper, extending the basic \emph{functional generation} mechanism to admissible strategies that allow short selling remains an important open direction. The goal of this paper is to generalize the \emph{portfolio construction} mechanism and the \emph{relative arbitrage characterization} \cite[Theorem~1]{Wong2016TheGeometryofRelativeArbitrage} from the long-only setting to a broader class of admissible long-short portfolios.
	
	\subsection{Diversity Assumption}
	To preclude degenerate market behaviors, we adopt the standard concept of \emph{market diversity} from Stochastic Portfolio Theory \cite{Fernholz2002SPT}. Specifically, we assume that the market weight process stays uniformly away from the boundary of the simplex.
	\begin{assumption}[Diversity]
		\label{ass:Diversity}
		There exists a nonempty open convex subset $K \subset \Delta^{(n)}$, referred to as the \emph{market state space}, with  $\partial K\cap \partial \Delta^{(n)}=\varnothing$, such that $\mu(t)\in K$ for all $t\ge 0$.
	\end{assumption}
	\begin{remark}
		Assumption \ref{ass:Diversity} holds, for instance, if the set of market weights has compact closure in the simplex, that is, if $\overline{\{\mu(t): t\ge 0\}}\subset \Delta^{(n)}$. In financial terms, this excludes the possibility that a stock's market weight vanishes, and it also rules out market concentration where a single stock captures the entire market.
	\end{remark}
	
	This boundedness condition is structurally essential for the existence of admissible short-selling strategies, which we formalize as \emph{bankruptcy-proof portfolios} in Definition \ref{def:Bankruptcy-ProofPortfolio}. As we will establish in Proposition \ref{prop:ConvergenceToLongOnly}, if the market state space is allowed to approach the entire open simplex, the set of admissible long-short strategies reduces to the set of long-only portfolios, which has been studied comprehensively.
	
	Formally, a \emph{portfolio function} is a map $\pi : D \rightarrow H$, where $D$ is a convex subset of $\Delta^{(n)}$. Given a current market weight $p\in D$, the investor holds the portfolio vector $\pi(p)\in H$. Under Assumption \ref{ass:Diversity}, we focus on the bankruptcy-proof portfolios $\pi : K \rightarrow H$ in this paper. A fundamental class of such strategies is the \emph{constant-weighted portfolio} (CWP), generated by a fixed vector $p \in H$.
	
	When $p \in \overline{\Delta^{(n)}}$, this corresponds to the well-studied long-only CWP. Following Cover and Ordentlich's seminal work on universal portfolios \cite{Cover1996UniversalPortfolios}, which leveraged the asymptotic properties of CWPs to achieve optimal growth, this class of strategies has inspired extensive research extensions \cite{He2022UniversalPortfolio, Walk2013Cover'sAlgorithmModified}. Moreover, CWPs continue to play a central role in modern theoretical developments, serving as a bridge between SPT, model-free finance \cite{Cuchiero2019Cover'sUniversalPortfolioSPT}, and information geometry \cite{Campbell2025MathematicalStudyExcessGrowth}. Given their foundational status in long-only theory, it is essential to rigorously examine the behavior of CWPs within our broader long-short framework. We will address this investigation in detail in Section \ref{subsec:CWP_MCM}.
	
	\subsection{Geometric Motivation: The Long-Only Case}
	Before presenting our main results, we briefly recall the functional generation framework. A functionally generated portfolio is specified by a positive concave generating function. Let $\Phi: \Delta^{(n)} \rightarrow (0, \infty)$ be a positive concave function. We define the associated logarithmic potential as $\varphi \coloneqq \log \Phi$. If $\Phi$ is differentiable, the portfolio weights $\pi$ generated by $\Phi$ are given by the gradient map:
	\begin{equation}
		\pi_i(p) = p_i \left( 1 + \langle \nabla \varphi(p), e_i - p \rangle \right), \quad i = 1, \ldots, n,
		\label{eq:FGDifferential}
	\end{equation}
	The mapping $p\mapsto \pi(p)$ may be viewed as an affine adjustment of the gradient field that enforces the budget constraint $\sum_{i=1}^n \pi_i(p)=1$, that is, $\pi(p)\in H$.
	In the information geometry literature \cite{Pal2018ExponentiallyCF, Wong2021ProjectionswithLogarithmicDivergences, Wong2019LogarithmicDivergencesCurvature}, authors work directly with the logarithmic potential $\varphi$. Such functions are termed \emph{exponentially concave}, a concept intimately linked to the \emph{$L$-divergence} (formally defined in Definition \ref{def:L_divergence}) which generates a dually flat geometry on the simplex. However, since the finer differential geometric structures (for example, dual connections) are beyond the scope of this paper, we primarily work with $\Phi$ and only invoke $\varphi$ when discussing divergence properties.
	
	Equation \eqref{eq:FGDifferential} serves as the primary motivation for our work. It reveals that the construction of a portfolio $\pi$ does not structurally require the generating function to be defined on the entire simplex. It suffices that $\Phi$ is defined on a convex set containing the practical market weights.
	In the classical setting of \cite{Wong2016TheGeometryofRelativeArbitrage}, Pal and Wong specifically study positive concave functions $\Phi$ that are defined on the \emph{entire} simplex $\Delta^{(n)}$. Under this global assumption, it is a known result (Proposition 5 in \cite{Wong2016TheGeometryofRelativeArbitrage}) that the portfolio $\pi$ defined by equation \eqref{eq:FGDifferential} is guaranteed to be long-only.
	
	Consequently, the restriction to long-only strategies is not an intrinsic feature of the functional generation framework; rather, it arises naturally from the requirement that the generating function be defined globally on the entire simplex $\Delta^{(n)}$. Relaxing the global assumption implies that the market must satisfy local constraints, but simultaneously opens the possibility for the strategy to exhibit negative weights (short positions), which is the focus of this paper.
	
	The performance of such portfolios is governed by the \emph{$L$-divergence} (Definition \ref{def:L_divergence}), $T_{\varphi}(q\mid p)$, which measures the convexity of the generating potential. This leads to the canonical pathwise decomposition of the \emph{relative wealth process} $V_\pi(t)$:
	\begin{equation}
		\log V_\pi(t) = \log \frac{\Phi(\mu(t))}{\Phi(\mu(0))} + \sum_{k=0}^{t-1} T_\varphi(\mu(k+1) \mid \mu(k)).
		\label{eq:Decomposition}
	\end{equation}
	Here $T_{\varphi}$ is nonnegative and captures the effect often referred to as ``volatility harvesting''.
	To characterize the growth capability of these strategies, we adopt the standard notion of \emph{pseudo-arbitrage} (see Definition \ref{def:pseudo-arbitrage} in Section \ref{subsec:Bankruptcy-proofPortfolios} for the formal definition), which requires bounded downside and unbounded upside potential.
	
	Connecting the decomposition \eqref{eq:Decomposition} with this definition yields the fundamental characterization for the long-only case. We restate Pal and Wong's theorem in our notation to highlight the role played by the domain of the generating function.
	
	\begin{theorem}[Characterization of Long-Only Pseudo-Arbitrage, \cite{Wong2016TheGeometryofRelativeArbitrage}]
		\label{thm:LongOnlyArbitrage}
		A long-only portfolio map $\pi:\Delta^{(n)} \rightarrow \overline{\Delta^{(n)}}$ constitutes a pseudo-arbitrage on a nonempty open convex subset $K \subset \Delta^{(n)}$ if and only if it is generated by a positive concave function $\Phi$ such that $\Phi$ extends to a non-negative concave function on the \emph{entire} simplex $\Delta^{(n)}$.
	\end{theorem}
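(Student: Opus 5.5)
The plan is to follow the optimal-transport/cyclical-monotonicity argument of Pal and Wong, adapted to the state space $K$. I take pseudo-arbitrage on $K$ to mean two things. First, there is a constant $C\ge 0$ with $\log V_\pi(t)\ge -C$ for every market path in $K$; as in \cite{Wong2016TheGeometryofRelativeArbitrage}, this is the operative requirement. Second, the upside is unbounded along suitable paths.

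\emph{Sufficiency.} Suppose $\pi$ is generated by a positive concave $\Phi$ on $K$, and $\Phi$ extends to a nonnegative concave function on $\Delta^{(n)}$. Concavity gives $T_\varphi(q\mid p)\ge 0$, and the decomposition \eqref{eq:Decomposition} holds. For long-only $\pi$ with multiplicative generation, the pathwise inequality
\[
\log\frac{V_\pi(t+1)}{V_\pi(t)}\ \ge\ \log\frac{\Phi(\mu(t+1))}{\Phi(\mu(t))}
\]
follows from the supergradient inequality. To get this, I write $\frac{V_\pi(t+1)}{V_\pi(t)}=\sum_i \pi_i(\mu(t))\frac{\mu_i(t+1)}{\mu_i(t)}=1+\langle \xi,\mu(t+1)-\mu(t)\rangle/\Phi(\mu(t))$, with $\xi$ a supergradient. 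Then $V_\pi(t)\ge \Phi(\mu(t))/\Phi(\mu(0))$. On $K$ the function $\Phi$ is bounded below by a positive constant, because $\overline K$ is a compact subset of $\Delta^{(n)}$ and $\Phi>0$ there. This gives the lower bound. For the upside, I take a two-point oscillation $p\leftrightarrow q$ in $K$ with $T_\varphi(q\mid p)+T_\varphi(p\mid q)>0$, and I argue that $\Phi$ cannot be affine on all of $K$ unless $\pi$ is the market portfolio, which is the trivial case that is excluded. The sum in \eqref{eq:Decomposition} then diverges.

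\emph{Necessity.} Let $\pi$ be long-only and suppose $\log V_\pi(t)\ge -C$ on all $K$-valued paths. I consider cycles $p_0,p_1,\ldots,p_m=p_0$ in $K$. The multiplicative cyclical monotonicity condition
\[
\prod_{k=0}^{m-1}\Bigl(1+\bigl\langle \pi(p_k)/p_k,\,p_{k+1}-p_k\bigr\rangle\Bigr)\ \ge\ 1
\]
must hold: otherwise, repeating a cycle with ratio below one drives $V_\pi\to 0$. By a Rockafellar-type construction, I define
\[
\Phi(q)=\inf\prod_{k}\Bigl(1+\bigl\langle \pi(p_k)/p_k,\,p_{k+1}-p_k\bigr\rangle\Bigr),
\]
the infimum taken over chains from a base point $p_0\in K$ to $p_m=q$. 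This function is concave and positive on $K$, and $\pi$ is generated by it, as in \cite[Theorem~1]{Wong2016TheGeometryofRelativeArbitrage}.

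\emph{Extension to the whole simplex, the main obstacle.} Here the long-only hypothesis on $\pi$ is used. Each factor $1+\langle \pi(p)/p,\,q-p\rangle=\sum_i \pi_i(p)q_i/p_i$ is nonnegative for every $q\in\overline{\Delta^{(n)}}$ when $\pi(p)\ge 0$. So each affine majorant $\ell_p(q)=\Phi(p)\sum_i\pi_i(p)q_i/p_i$ is nonnegative on the whole simplex. I therefore define the extension as $\widehat\Phi=\inf_{p\in K}\ell_p$. This is concave on $\Delta^{(n)}$, being an infimum of affine functions, and nonnegative. It agrees with $\Phi$ on $K$, because the $\ell_p$ are supporting hyperplanes of the concave $\Phi$. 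The delicate point is confirming that these supergradients are exactly the ones induced by $\pi$ via \eqref{eq:FGDifferential}, in the nondifferentiable case. I would handle this by working with superdifferentials in $T$, following the paper's Convention, and checking the identity $\pi_i(p)/p_i=1+\langle\xi,e_i-p\rangle/\Phi(p)$ for a supergradient $\xi$.
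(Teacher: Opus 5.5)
Your sufficiency argument breaks at the upside step. You claim that $\Phi$ cannot be affine on $K$ unless $\pi$ is the market portfolio, and that this case is excluded. Both halves of that claim are false.

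Take any $\Phi(p)=\sum_j a_jp_j$ with $a_j\ge 0$, not all zero. It is positive, affine (hence concave) and nonnegative on all of $\Delta^{(n)}$. It generates the long-only buy-and-hold portfolio $\pi_i(p)=a_ip_i/\sum_j a_jp_j$. For this portfolio \eqref{eq:MCMIneq} holds with equality, so $T_\varphi\equiv 0$ and $V_\pi(t)=\Phi(\mu(t))/\Phi(\mu(0))\le \max_j a_j/\Phi(\mu(0))$ along every path. For example, $\Phi(p)=p_1$ generates $\pi\equiv e_1$ with $V_\pi(t)=\mu_1(t)/\mu_1(0)$. The statement as written rules out none of these, not even the market portfolio ($\Phi\equiv 1$). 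So its ``if'' direction is literally false, and no argument can close this step.

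What is missing is a hypothesis: $\Phi|_K$ must be assumed not affine on $K$. This is condition (ii) of Theorem~\ref{thm:AdmissibleArbitrage}, to which this result reduces when $D=\Delta^{(n)}$ by the remark after it, and it is part of Pal and Wong's original formulation. With that hypothesis, your two-point oscillation works exactly as in the paper's proof of Theorem~\ref{thm:AdmissibleArbitrage}. Your necessity proof must then also produce non-affinity, which is easy: if $\Phi|_K$ were affine, Lemma~\ref{lem:ValueDecomposition} gives $A(t)\equiv 0$, so $V_\pi$ stays bounded, contradicting unbounded upside.

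A milder version of the same problem affects your lower bound. You use that $\overline K$ is a compact subset of $\Delta^{(n)}$. That is Assumption~\ref{ass:Diversity}, not a hypothesis of the statement, and without it the bound can fail. For $K=\Delta^{(n)}$, the geometric mean is a non-affine nonnegative concave function on the simplex, yet the equal-weighted portfolio it generates loses value geometrically along a path whose first coordinate halves each step. So in general you also need condition (i), that $\Phi|_K$ is bounded away from zero.

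Under the diversity assumption your argument is fine once you justify two things:
\begin{itemize}
\item The extension is positive on $\overline K\setminus K$. This holds because a nonnegative concave function on $\Delta^{(n)}$ that is positive somewhere is positive throughout $\Delta^{(n)}$, by the line argument in the proof of Proposition~\ref{prop:ZeroBoundary}.
\item You need $\sup_K\Phi<\infty$ to make $C$ independent of $\mu(0)$.
\end{itemize}

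The rest of your necessity argument is sound and matches the paper's machinery. MCM comes from Lemma~\ref{lem:Pseudo-arbitrageMCM}, and the Rockafellar-type construction is that of Proposition~\ref{prop:MCMConcaveFunction}. The extension $\inf_{p\in K}\ell_p$ is the maximal concave extension of Definition~\ref{def:MaximalExtension}. It is nonnegative precisely because $\pi\ge 0$, as in the first half of the proof of Theorem~\ref{thm:ShortSellingEquivalence}.

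The ``delicate point'' you flag is not an issue. \eqref{eq:MCMIneq} itself says $\ell_p\ge\Phi$ on $K$, and $\ell_p(p)=\Phi(p)\sum_i\pi_i(p)=\Phi(p)$. Hence $\inf_{p\in K}\ell_p=\Phi$ on $K$, with no differentiability or identification of supergradients needed.
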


	\subsection{Summary of the main results}
	This paper extends the geometric framework of functional generation to the broader class of \emph{bankruptcy-proof} portfolios (Definition \ref{def:Bankruptcy-ProofPortfolio}). Our main contributions are summarized as follows.
	
	\begin{enumerate}
		\item \textbf{Characterization of Admissible Arbitrage.}
		We show that, for bankruptcy-proof portfolios, pseudo-arbitrage on $K$ is characterized by concavity of the generating function on $K$.
		
		\begin{theorem}[Characterization of Admissible Pseudo-Arbitrage]
				\label{thm:AdmissibleArbitrage}
				Let $K$ be the market state space in Assumption~\ref{ass:Diversity}, and let $D\subset\Delta^{(n)}$ be a nonempty open convex set with $K\subset D$. A bankruptcy-proof portfolio $\pi:D\to H$ is a pseudo-arbitrage on $K$ if and only if there exists a function $\Phi:D\to[0,\infty)$ such that:
				\begin{enumerate}
					\item[(i)] $\Phi|_K$ is bounded away from zero on $K$;
					\item[(ii)] $\Phi|_K$ is not affine on $K$;
					\item[(iii)] $\Phi|_K$ is concave on $K$;
					\item[(iv)] $\pi|_K$ is generated by $\Phi|_K$ in the sense of Definition~\ref{def:FGP_Admissible}.
				\end{enumerate}
		\end{theorem}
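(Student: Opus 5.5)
The plan is to adapt the argument of Pal and Wong \cite[Theorem~1]{Wong2016TheGeometryofRelativeArbitrage} to the local state space $K$. The global-domain requirement there enters only through the long-only constraint, which we now replace by the bankruptcy-proof condition. Throughout, I assume that Definition~\ref{def:FGP_Admissible} means the following: for every $p\in K$ there is a supergradient of $\log\Phi$ at $p$ (normalized in $T$) such that $\pi(p)$ is given by \eqref{eq:FGDifferential}. Equivalently, the multiplicative inequality
\[
\frac{\Phi(q)}{\Phi(p)}\le \sum_{i}\pi_i(p)\frac{q_i}{p_i},\qquad p,q\in K,
\]
holds. This is exactly the pathwise relation behind \eqref{eq:Decomposition}, with $T_\varphi\ge 0$.

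\textbf{Sufficiency.} Assume (i)--(iv) hold.
\begin{enumerate}
\item By (iii), $\Phi|_K$ is concave and positive. By (iv), $\pi$ is generated by a supergradient of $\Phi$. Concavity then gives the inequality above, with the ratio $\sum_i\pi_i(p)q_i/p_i$ equal to the one-period relative return. Telescoping yields \eqref{eq:Decomposition} with nonnegative $T_\varphi$, so
\[
\log V_\pi(t)\ \ge\ \log\Phi(\mu(t))-\log\Phi(\mu(0)).
\]
\item Since $\Phi$ is concave on $K$, it is bounded above there: $K$ is relatively compact in $\Delta^{(n)}$, and a concave function is bounded above near the boundary by its values on a simplex containing $K$. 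By (i), $\Phi$ is bounded below by a positive constant. Together these give the bounded-downside property $\inf_t\log V_\pi(t)>-\infty$, uniformly over market paths in $K$.
\item For unbounded upside I use (ii). Since $\Phi$ is not affine on $K$, there exist $p\ne q\in K$ with $T_\varphi(q\mid p)>0$. Because $\log V_\pi$ has bounded downside, it suffices to exhibit a path in $K$ along which the sum of divergences diverges. The oscillating path $\mu(t)$ alternating between $p$ and $q$ does this. The quantity $T_\varphi(q\mid p)+T_\varphi(p\mid q)$ equals the log of a product of two ratios, and it is strictly positive unless $\Phi$ is affine on the segment $[p,q]$. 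Choosing $p,q$ in a direction of non-affinity, the potential term stays bounded while the sum grows linearly. This verifies Definition~\ref{def:pseudo-arbitrage}.
\end{enumerate}

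\textbf{Necessity.} Assume $\pi$ is bankruptcy-proof and a pseudo-arbitrage on $K$. Following Pal and Wong, I construct $\Phi$ via a cyclical-monotonicity/optimal-transport argument.
\begin{enumerate}
\item Bounded downside over all cycles $p_0\to p_1\to\cdots\to p_m=p_0$ in $K$ forces
\[
\prod_k \sum_i \pi_i(p_k)\frac{p_{k+1,i}}{p_{k,i}}\ \ge 1.
\]
Otherwise, repeating the cycle drives wealth to zero. Bankruptcy-proofness guarantees that each factor is positive, which is exactly where it replaces the long-only assumption.
\item This multiplicative cyclical monotonicity lets me define
\[
\Phi(q):=\inf\prod_k \sum_i\pi_i(p_k)\frac{p_{k+1,i}}{p_{k,i}},
\]
the infimum taken over chains from a base point to $q$ (a Rockafellar-type construction). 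As an infimum of functions linear in $q$, $\Phi$ is concave on $K$. It generates $\pi$ in the sense of Definition~\ref{def:FGP_Admissible}, and it is finite and positive.
\item Positivity bounded away from zero, i.e.\ (i), follows from the bounded downside constant, which bounds every chain product below uniformly.
\item $\Phi$ is not affine, i.e.\ (ii): if it were affine, every divergence would vanish and $V_\pi$ would equal $\Phi(\mu(t))/\Phi(\mu(0))$, which is bounded, contradicting unbounded upside.
\item Finally, extend $\Phi$ to $D$ by any nonnegative function, e.g.\ the concave envelope or even $0$ outside $K$, since only $\Phi|_K$ matters.
\end{enumerate}

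\textbf{Main obstacle.} The hardest step will be the necessity construction. I must check that the infimum is strictly positive and finite using only bankruptcy-proofness (the long-only proof uses $\pi\ge0$ to get positivity of the linear factors). I must also show that the resulting supergradient reproduces $\pi$ exactly, including making the normalization in $T$ consistent.
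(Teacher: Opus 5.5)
Your proof is correct and, apart from one step, follows the paper. Sufficiency uses the value decomposition of Lemma~\ref{lem:ValueDecomposition} and the alternating path $p\to q\to p\to\cdots$. Necessity uses the chain-infimum construction of Proposition~\ref{prop:MCMConcaveFunction}, together with the observation that an affine $\Phi|_K$ forces every divergence to vanish.

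The real difference is how you obtain (i) in the necessity direction. The paper argues by contradiction along the lines of Pal and Wong: if $\Phi_K$ were not bounded away from zero, one approaches a point of $\overline{K}$ where it vanishes. It then uses the gradient representation and $\log(1+x)\le x$ to build a path in $K$ with $V_\pi(t)\to0$. You instead observe that every chain product from $p_0$ to $q$ with all points in $K$ is exactly a realized relative value $V_\pi(t+1)$ along a market path in $K$ (extended by staying at $q$). The bounded-downside constant therefore gives $\Phi\ge e^{-C}$ on $K$ at once.

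Your argument is shorter and needs neither differentiability nor boundary values of $\Phi_K$. It also makes your MCM step unnecessary: MCM only supplies the normalization $\Phi(p_0)=1$, while positivity already follows from $\Phi\ge e^{-C}$. For the same reason, the ``main obstacle'' you flag is already resolved by your own step~3, since bankruptcy-proofness alone only yields $\Phi\ge0$. The generation inequality is likewise immediate by appending the step $p\to q$ to any chain ending at $p$. The paper's argument applies to an arbitrary generating function of $\pi|_K$ rather than to your particular infimum. Since the theorem only asks for existence, your version suffices.

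Two points of precision. First, the chains in your infimum must stay in $K$, because the downside bound is only available for paths in $K$; this is also why extending by $0$ on $D\setminus K$ is harmless. Second, your justification of $\sup_K\Phi<\infty$ via values on a simplex containing $K$ does not work, because $\Phi$ is only known to be concave on $K$. Use instead $\Phi(q)\le\Phi(p_0)\langle\pi(p_0),q/p_0\rangle$, which is an affine function of $q$ and hence bounded on the bounded set $K$.
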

		
		\begin{remark}
			If one takes $D=\Delta^{(n)}$, then Proposition \ref{prop:ConvergenceToLongOnly} implies that any bankruptcy-proof portfolio $\pi:\Delta^{(n)}\to H$ must in fact be long-only. In this case, Theorem \ref{thm:AdmissibleArbitrage} reduces to the long-only characterization of pseudo-arbitrage, and its statement agrees with \cite[Theorem~1]{Wong2016TheGeometryofRelativeArbitrage}; see Theorem \ref{thm:LongOnlyArbitrage}.
		\end{remark}

		\item \textbf{Geometric Criterion for Short Selling.}
		We solve the identification problem for short-selling strategies by introducing the \emph{maximal concave extension} $\hat{\Phi}$ of the generating function from $K$ to $\Delta^{(n)}$. We show that $\pi$ involves short selling on $K$ if and only if $\hat{\Phi}$ takes a negative value somewhere on $\Delta^{(n)}$.
		
		\begin{theorem}[Equivalence of Short Selling and Negative Extension]
				\label{thm:ShortSellingEquivalence}
				Let $K$ be the market state space in Assumption~\ref{ass:Diversity}$,$ and let $\Phi:K\to(0,\infty)$ be a differentiable concave function generating a portfolio $\pi:K\to H$. Then $\pi$ involves short selling on $K$ if and only if its maximal concave extension $\hat{\Phi}$ satisfies $\hat{\Phi}(x)<0$
				for some $x\in\Delta^{(n)}$.
		\end{theorem}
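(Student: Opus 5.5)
The plan is to work from an explicit formula for the maximal concave extension and compare it with the portfolio weights coordinate by coordinate. I take $\hat{\Phi}$ to be the infimum of the affine majorants of $\Phi$ on $K$. Since $\Phi$ is differentiable and concave on $K$, these majorants are generated by tangent planes, so for $x\in\Delta^{(n)}$
\[
\hat{\Phi}(x)=\inf_{p\in K}\Bigl(\Phi(p)+\langle \nabla\Phi(p),x-p\rangle\Bigr),
\]
with gradients normalized in $T$ as in the Convention. The central observation is that for each fixed $p$, the tangent plane $\ell_p(x):=\Phi(p)+\langle\nabla\Phi(p),x-p\rangle$ is affine on $H$. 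Evaluated at the vertices $e_i$, the generating formula \eqref{eq:FGDifferential} gives
\[
\ell_p(e_i)=\Phi(p)\bigl(1+\langle\nabla\varphi(p),e_i-p\rangle\bigr)=\Phi(p)\,\frac{\pi_i(p)}{p_i}.
\]
Because $\Phi(p)>0$ and $p_i>0$, the sign of $\pi_i(p)$ is the sign of $\ell_p(e_i)$. I will first check that Definition~\ref{def:FGP_Admissible} reduces to \eqref{eq:FGDifferential} in the differentiable case, which I take as given from the paper's setup.

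($\Rightarrow$) Suppose $\pi_i(p_0)<0$ for some $p_0\in K$ and some $i$. Then $\ell_{p_0}(e_i)<0$. The vertex $e_i$ lies outside $\Delta^{(n)}$, but $\ell_{p_0}$ is continuous, so $\ell_{p_0}(x)<0$ for $x\in\Delta^{(n)}$ close to $e_i$. Since $\hat{\Phi}\le \ell_{p_0}$, this gives $\hat{\Phi}(x)<0$ for such $x$.

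($\Leftarrow$) Suppose $\pi$ is long-only on $K$, so $\pi_i(p)\ge 0$ for all $p$ and $i$. Then $\ell_p(e_i)\ge0$ for every $i$. Any $x\in\Delta^{(n)}$ is a convex combination $\sum x_ie_i$, and $\ell_p$ is affine, so $\ell_p(x)=\sum_i x_i\ell_p(e_i)\ge 0$. Taking the infimum over $p$ gives $\hat{\Phi}(x)\ge0$ on $\Delta^{(n)}$. By contraposition, $\hat{\Phi}(x)<0$ somewhere forces $\pi_i(p)<0$ for some $p,i$, which is short selling.

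The main obstacle is justifying the formula for $\hat{\Phi}$. I need to confirm that the paper's definition of the maximal concave extension (the largest concave function on $\Delta^{(n)}$ agreeing with $\Phi$ on $K$, or the infimum of affine majorants) coincides with the infimum of the tangent planes over $K$. The plan here is as follows.

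First, every concave extension $g$ is dominated by each $\ell_p$. Take $x\in\Delta^{(n)}$ and points $p+t(x-p)$ with small $t>0$, which lie in $K$. Concavity of $g$ along the segment gives
\[
g(x)\le \Phi(p)+\frac{g(p+t(x-p))-\Phi(p)}{t}\to \ell_p(x)
\]
by differentiability of $\Phi$ at $p$. Hence $g\le\inf_p\ell_p$.

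Second, the infimum of the tangent planes is concave, being an infimum of affine functions. It also equals $\Phi$ on $K$, because $\Phi\le\ell_p$ on $K$ by concavity, with equality at $p$. So it is itself an extension and attains the maximum.

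If the paper's $\hat{\Phi}$ is allowed to take the value $-\infty$, the argument is unchanged: in that case $\hat{\Phi}<0$ somewhere, and the ($\Leftarrow$) direction already shows $\hat\Phi\ge0$ on $\Delta^{(n)}$ whenever $\pi$ is long-only.
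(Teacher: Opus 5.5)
Your proof is correct and is essentially the paper's argument. Since $\ell_p$ is affine on $H$, $\ell_p(x)=\sum_i x_i\ell_p(e_i)=\Phi(p)\langle\pi(p),x/p\rangle=L_p(x)$, so your tangent-plane infimum is exactly the paper's definition of $\hat{\Phi}$; your contrapositive for ($\Leftarrow$) is the paper's sign argument using $x_j/p_j>0$, and your ($\Rightarrow$) is the paper's approach to $e_i$ from inside $\Delta^{(n)}$.

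One caution about your framing: the alternative reading you mention, the infimum of \emph{all} affine majorants of $\Phi$ on $K$, is not the same object and would make the theorem false. Adding $t\,h$ with $h$ affine, $h\ge 0$ on $\overline{K}$ and $h(x)<0$, drives that infimum to $-\infty$ at every $x\notin\overline{K}$, even for long-only $\pi$. Only the ``largest concave extension'' identification you actually prove is valid, and the paper needs no identification at all because it defines $\hat{\Phi}$ directly as the tangent-plane infimum.
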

		
		\item \textbf{Geometric Phase Transition of Shrunken Portfolios.}
		We provide a rigorous description of the spatial structure of functionally generated portfolios. By analyzing the generating potential's boundary behavior, we identify a long-only core and, under a boundary condition, a region near the boundary where short selling occurs. These regions are described qualitatively and are not claimed to form a partition of the domain.
		
		\begin{theorem}[Geometric Phase Transition of Shrunken Portfolios]
			\label{thm:GeometricPhaseTransition}
			Let $D\subset\Delta^{(n)}$ be a symmetric open convex set, and let $\Phi:\overline{D}\to[0,\infty)$ be a symmetric continuous concave function that is strictly positive and continuously differentiable on $D$. Assume that there exists a boundary point $q\in\partial D\cap\Delta^{(n)}$ such that $\Phi(q)=0$. Let $\pi:D\rightarrow H$ be the portfolio generated by $\Phi$, equivalently represented by \eqref{eq:FGDifferential_Local}. Then:
			\begin{enumerate}
				\item[(i)] There exists an open neighborhood $U\subset D$ of the barycenter $\bar e$ such that $\pi_i(p)>0$ for all $i=1,\ldots,n$ and all $p\in U$.
				\item[(ii)] If, for some $i$, the point $q$ lies in the relative interior of the segment joining a point $p_0\in D$ and the vertex $e_i$, then there exists a point $p^\ast\in[p_0,q)$ such that
				\[
				\pi_i(p)<0,\quad p\in[p^\ast,q)\cap D.
				\]
			\end{enumerate}
		\end{theorem}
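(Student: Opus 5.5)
Both parts follow from rewriting the weights in the form \eqref{eq:FGDifferential_Local},
\[
\pi_i(p)=p_i\Bigl(1+\frac{\langle\nabla\Phi(p),e_i-p\rangle}{\Phi(p)}\Bigr),
\]
and controlling the sign of the bracket. Part (i) is a symmetry-and-continuity argument at the barycenter. Part (ii) is a one-dimensional concavity argument along the segment towards $e_i$.

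For (i), first show that $\bar e\in D$: since $D$ is a nonempty symmetric convex set, averaging any $p\in D$ over all coordinate permutations gives a convex combination of points of $D$, and this average equals $\bar e$. Because $\Phi$ is symmetric and differentiable at $\bar e$, its gradient there is permutation invariant. Hence its representative in $T$ (with the normalization $\sum_i x_i=0$) is $0$, and $\pi(\bar e)=\bar e$, whose coordinates all equal $1/n>0$. The map $p\mapsto\pi(p)$ is continuous on $D$, because $\Phi$ is $C^1$ and strictly positive there. So positivity of each coordinate persists on an open neighborhood $U\subset D$ of $\bar e$.

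For (ii), set $g(t)=\Phi(p_0+t(e_i-p_0))$. Since $q=p_0+t_q(e_i-p_0)$ with $t_q\in(0,1)$, the function $g$ is defined, concave and continuous on $[0,t_q]$, $C^1$ on $[0,t_q)$, with $g>0$ on $[0,t_q)$ and $g(t_q)=0$. For $p=p_0+t(e_i-p_0)$ we have $e_i-p=(1-t)(e_i-p_0)$, so
\[
\langle\nabla\Phi(p),e_i-p\rangle=(1-t)g'(t).
\]
Substituting,
\[
\frac{\pi_i(p)}{p_i}=1+(1-t)\frac{g'(t)}{g(t)},
\]
where $p_i>0$ since $p\in\Delta^{(n)}$. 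Concavity gives $g'(t)\le \frac{g(t_q)-g(t)}{t_q-t}=-\frac{g(t)}{t_q-t}$, so $g'(t)/g(t)\le -1/(t_q-t)$. Therefore
\[
\frac{\pi_i(p)}{p_i}\le 1-\frac{1-t}{t_q-t}=\frac{t_q-1}{t_q-t}<0,
\]
because $t_q<1$. This holds for every $t\in[0,t_q)$, so we may take $p^\ast=p_0$, and a fortiori some $p^\ast\in[p_0,q)$ works.

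The main obstacle is the clean use of the concavity chord inequality at the endpoint $t_q$, where $\Phi$ may fail to be differentiable. I would handle it through the one-sided chord bound $g'(t)\le (g(t_q)-g(t))/(t_q-t)$, which requires only concavity on $[t,t_q]$ and continuity at $t_q$, both given by the hypothesis on $\overline D$. A minor point is confirming that $q\in\overline D$ implies $[p_0,q)\subset D$; this holds since $p_0\in D$ is interior and $D$ is convex.
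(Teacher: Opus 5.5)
Your part (i) is correct. It differs only slightly from the paper, which uses symmetry plus concavity to make $\bar e$ a maximizer and then invokes Proposition~\ref{prop:LongOnlyRegion}. You instead get $\nabla\Phi(\bar e)=0$ directly from permutation invariance, and your averaging argument for $\bar e\in D$ supplies a step the paper leaves implicit.

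Part (ii), however, rests on a reversed inequality. For a concave $g$ differentiable at $t$, the tangent line lies \emph{above} the graph, so $g(t_q)\le g(t)+g'(t)(t_q-t)$, that is,
\[
g'(t)\ \ge\ \frac{g(t_q)-g(t)}{t_q-t}=-\frac{g(t)}{t_q-t}.
\]
This is the opposite of what you wrote; for example, $g(t)=1-t^2$ with $t_q=1$ gives $g'(0)=0>-1$. With the correct direction, your computation only yields the lower bound $\pi_i(p)/p_i\ge (t_q-1)/(t_q-t)$, which says nothing about the sign. Your conclusion that $\pi_i<0$ on all of $[p_0,q)$, so that $p^\ast=p_0$ works, is actually false. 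Take the SEWP with $p_0=\bar e$ and $q=S_\lambda(e_i)$: this satisfies every hypothesis of (ii), yet $\pi_i(\bar e)=1/n>0$ by part (i). So the real difficulty is not endpoint regularity but obtaining an \emph{upper} bound on $g'$, and $p^\ast$ must genuinely be taken close to $q$.

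The repair is to use the chord from the left. Setting $s=0$ in $g(s)\le g(t)+g'(t)(s-t)$ gives
\[
g'(t)\le \frac{g(t)-g(0)}{t},\qquad t\in(0,t_q).
\]
By continuity at $t_q$, the right-hand side tends to $-g(0)/t_q<0$. Hence $g'(t)\le -c$ with $c:=g(0)/(2t_q)$ on some interval $[t_1,t_q)$. Since $1-t\ge 1-t_q>0$ and $g(t)\to0^+$,
\[
\frac{\pi_i(p)}{p_i}=1+(1-t)\frac{g'(t)}{g(t)}\le 1-\frac{(1-t_q)\,c}{g(t)}\longrightarrow-\infty\qquad (t\uparrow t_q),
\]
which produces $p^\ast$. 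This is the paper's mechanism from Proposition~\ref{prop:LogGradientDivergence} and Corollary~\ref{coro:ShortSellingNearBoundary}: a directional derivative bounded away from zero, divided by a vanishing $\Phi$. Carried out on the line as above, monotonicity of $g'$ replaces the paper's mean value theorem for convex functions and supergradient selection. In the $C^1$ setting of the theorem, your one-dimensional reduction, once corrected, is therefore a cleaner route than the paper's.
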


		\item \textbf{Applications: The SEWP and SEP.}
		We bridge theory and practice by introducing the \emph{Barycentric Scaling Transformation} (Definition \ref{def:ScalingTransformation}). Through the construction of the Shrunken Equal-Weighted Portfolio (SEWP) and Shrunken Entropy Portfolio (SEP), we demonstrate how this methodology allows for the systematic design of admissible strategies with controlled short selling exposure. In particular, for the SEWP we further apply the analytical framework of Theorem \ref{thm:GeometricPhaseTransition} to its specific domain and provide qualitative characterizations of the long-only core and the shell where short selling occurs; see subsection \ref{subsubsec:SEWP}, in particular Propositions \ref{prop:EWPAllLongArea} and \ref{prop:ShortCondition}.
	\end{enumerate}
	
	\section{Admissible Portfolios and Market Stability} 
	\label{sec:AdmissiblePortfolios}
	
	The generalization of the geometric framework to the entire hyperplane $H$ aligns with fundamental economic insights regarding market efficiency. Miller \cite{Miller1997RiskUncertaintyDivergence} famously argued that when short selling is restricted, asset prices tend to reflect only the views of optimistic investors, leading to potential overvaluation. Furthermore, Diamond and Verrecchia \cite{Diamond1987ConstraintsShort-selling} demonstrated that short-sale constraints reduce the speed at which prices adjust to private information. Therefore, a comprehensive theory of arbitrage must explicitly account for strategies that permit short positions in order to correct such mispricing and exploit a broader range of market dynamics.
	
	However, extending the geometric framework to include short positions introduces significant risks. Unlike long-only strategies, a portfolio with negative weights faces the possibility of unlimited liability if the shorted assets experience sharp appreciation. To maintain the viability of the strategy, we restrict attention to \emph{admissible strategies}, a concept rooted in the work of Harrison and Pliska \cite{Harrison1981MartingalesandStochasticIntegralsContinuousTrading}. In our setting, admissibility means that the (relative) portfolio value remains strictly positive at all times. This implies that any short position must be sufficiently collateralized by the long positions. The imposition of market assumptions, such as Assumption \ref{ass:Diversity}, provides the necessary regularity to analyze these admissible strategies without relying on specific probabilistic models.
	
	\subsection{Bankruptcy-proof Long-Short Portfolios} 
	\label{subsec:Bankruptcy-proofPortfolios}
	
	To formalize the set of admissible portfolios, we first recall the dynamics of the relative wealth process. Let $V_\pi(t)$ denote the ratio of the wealth of a portfolio $\pi$ to that of the market portfolio $\mu$ at time $t$. In a discrete-time setting, the evolution of this relative value is governed by the portfolio's one-period relative return. Specifically, as noted in \cite[Lemma 2.1]{Pal2016energyentropyarbitrage}, the dynamics of a long-only portfolio $\pi$ are given by:
	\begin{equation}
		V_\pi(0) = 1, \quad \frac{V_\pi(t+1)}{V_\pi(t)} = \left\langle \pi(\mu(t)), \frac{\mu(t+1)}{\mu(t)} \right\rangle,
		\label{eq:RelativeValue}
	\end{equation}
	where $\frac{\mu(t+1)}{\mu(t)}$ denotes the element-wise ratio vector of market weights.
	
	Unlike long-only strategies, which inherently satisfy $\langle \pi, q/p \rangle > 0$ due to the positivity of weights, a long-short portfolio faces the risk of bankruptcy. If the inner product in \eqref{eq:RelativeValue} becomes non-positive, the relative wealth drops to zero or becomes negative, signifying ruin. Therefore, to ensure the strategy remains solvent ($V_\pi(t) > 0$) for all possible market trajectories allowed by the diversity assumption, we impose the following strict positivity condition.
	
	\begin{definition}[Bankruptcy-proof Portfolio]
		\label{def:Bankruptcy-ProofPortfolio}
		Let $D$ be a nonempty open convex subset of $\Delta^{(n)}$. A portfolio map $\pi: D \rightarrow H$ is termed \emph{bankruptcy-proof} if, for all current market weights $p \in D$ and all possible next-period weights $q \in D$, it satisfies
		\begin{equation}
			\left\langle \pi(p), \frac{q}{p} \right\rangle > 0.
			\label{eq:NoBankruptcyCondition}
		\end{equation}
		The set of all such bankruptcy-proof portfolios is denoted by
		\begin{equation}
			\mathcal{BP}(D) = \left\{ \pi: D \rightarrow H \ \big| \ \forall p, q \in D, \ \left\langle \pi(p), \frac{q}{p} \right\rangle > 0 \right\}.
		\end{equation}
		
		Furthermore, for a fixed asset $i$, we define the class of \emph{$i$-bankruptcy-proof} portfolios, denoted $\mathcal{BP}_i(D)$, as the subset of $\mathcal{BP}(D)$ consisting of strategies that maintain long positions in all assets except possibly asset $i$:
		\begin{equation}
			\mathcal{BP}_i(D) = \left\{ \pi \in \mathcal{BP}(D) : \pi_j(p) \ge 0, \ \forall j \neq i, \ \forall p \in D \right\}.
		\end{equation}
	\end{definition}
	The condition \eqref{eq:NoBankruptcyCondition} is the geometric guarantee of solvency. In this paper, we use the terms \emph{bankruptcy-proof} and \emph{admissible} interchangeably.
	
	Under the bankruptcy-proof condition, equation \eqref{eq:RelativeValue} holds for long-short portfolios, and the definition of \emph{pseudo-arbitrage} \cite[Definition 1]{Wong2016TheGeometryofRelativeArbitrage} is naturally inherited.
	
	\begin{definition}[\emph{Pseudo-arbitrage}]
		\label{def:pseudo-arbitrage}
		Let $K$ be a market state space satisfying Assumption \ref{ass:Diversity}. Let $D\subset \Delta^{(n)}$ be an open convex set with $K\subset D$. A bankruptcy-proof portfolio $\pi: D \rightarrow H$ is called a pseudo-arbitrage on $K$ if:
		\begin{enumerate}
			\item There exists a non-negative constant $C = C(K, \pi)$ such that for every sequence of market weights $\{\mu(s)\}_{s=0}^{\infty} \subset K$, we have $\log V_\pi(t) \ge -C$ for all $t \ge 0$.
			\item There exists at least one market weight sequence $\{\mu(t)\}_{t=0}^{\infty}\subset K$ such that $\lim\limits_{t\to\infty}V_{\pi}(t)=\infty$.
		\end{enumerate}
	\end{definition}
	
	\subsection{Properties of Bankruptcy-proof Portfolios}
	\label{subsec:PropertiesofBankruptcy-proofPortfolios}
	The definition of bankruptcy-proof portfolios is not merely a technical constraint; it induces a rich geometric structure. The following three propositions characterize this structure, highlighting the trade-off between market uncertainty and the ability to maintain short positions.
	
	Proposition \ref{prop:Existence} establishes that the set of admissible strategies is convex and, crucially, strictly larger than the long-only set under Assumption \ref{ass:Diversity}. Proposition \ref{prop:Monotonicity} reveals that as market uncertainty (the size of the domain $D$) increases, the set of admissible strategies shrinks. Finally, Proposition \ref{prop:ConvergenceToLongOnly} provides the theoretical justification for Assumption \ref{ass:Diversity}: it proves that in the absence of a restricted state space (i.e., as uncertainty approaches the entire simplex), short selling becomes mathematically impossible without risking bankruptcy.
	
	\begin{prop}[Convexity and Existence]
		\label{prop:Existence}
		For any nonempty open convex subset $D$ of $\Delta^{(n)}$ and market state space $K$ satisfying Assumption \ref{ass:Diversity}:
		\begin{enumerate}
			\item[(i)] The sets $\mathcal{BP}(D)$ and $\mathcal{BP}_i(D)$ are nonempty convex sets.
			\item[(ii)] $\mathcal{BP}_i(K)$ contains portfolios that involve short selling in asset $i$.
		\end{enumerate}
	\end{prop}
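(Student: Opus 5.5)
For part (i), nonemptiness follows once we exhibit one element of $\mathcal{BP}_i(D)$. The market portfolio $\pi(p)=p$ works: $\langle p, q/p\rangle=\sum_j q_j=1>0$ for all $p,q\in D$, and $p_j>0$ for every $j$, so $\pi\in\mathcal{BP}_i(D)\subset\mathcal{BP}(D)$. More generally any long-only map into $\overline{\Delta^{(n)}}$ works, because $q/p$ has strictly positive entries and a nonzero nonnegative vector summing to one pairs positively with it.

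Convexity is checked pointwise. Let $\pi,\pi'\in\mathcal{BP}(D)$ and $\lambda\in[0,1]$. Then $\lambda\pi+(1-\lambda)\pi'$ still takes values in $H$, since $H$ is affine and the weights sum to one. For every $p,q\in D$,
\[
\bigl\langle \lambda\pi(p)+(1-\lambda)\pi'(p),\, q/p\bigr\rangle=\lambda\langle\pi(p),q/p\rangle+(1-\lambda)\langle\pi'(p),q/p\rangle>0 .
\]
The extra constraints $\pi_j\ge 0$ for $j\ne i$ are linear inequalities, so they are also preserved under convex combinations. Hence $\mathcal{BP}_i(D)$ is convex too.

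For part (ii) I will construct an explicit constant-weight portfolio that shorts asset $i$ and use the diversity assumption to verify admissibility. Since $\partial K\cap\partial\Delta^{(n)}=\varnothing$ and $K$ is a nonempty open convex subset of the open simplex, $\overline{K}$ is a closed subset of $\overline{\Delta^{(n)}}$ disjoint from $\partial\Delta^{(n)}$, hence compact in $\Delta^{(n)}$. (Closure points of $K$ lie either in $K$ or in $\partial K$, and neither meets $\partial\Delta^{(n)}$.) So there are constants $0<m\le M<1$ with $m\le p_j\le M$ for all $p\in K$ and all $j$. It follows that every ratio satisfies $q_j/p_j\in[m/M,\,M/m]$ for $p,q\in K$; write $r=m/M$ and $R=M/m$.

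Fix $\varepsilon>0$ and set $\pi=w$, where $w_i=-\varepsilon$ and $w_j=(1+\varepsilon)/(n-1)$ for $j\ne i$. Then $w\in H$, $w_j\ge0$ for $j\ne i$, and $w_i<0$, so $\pi$ shorts asset $i$. For $p,q\in K$,
\[
\langle w,q/p\rangle\ \ge\ (1+\varepsilon)\,r-\varepsilon R ,
\]
which is strictly positive as soon as $\varepsilon<r/(R-r)$. If $R=r$ the inequality holds for every $\varepsilon$, but that case cannot occur for $n\ge2$ anyway. Choosing such an $\varepsilon$ gives $\pi\in\mathcal{BP}_i(K)$ with short selling in asset $i$.

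The only step requiring care is the compactness claim that gives uniform bounds $m,M$ from Assumption~\ref{ass:Diversity}. I will argue it directly as above: a limit point of $K$ lying on $\partial\Delta^{(n)}$ would belong to $\partial K\cap\partial\Delta^{(n)}$, which is empty. The positivity of $p_j$ on the compact set $\overline{K}$ then yields $m>0$.
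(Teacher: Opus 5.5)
Your proof is correct and follows the paper's approach. Part (i) uses linearity of the inner product, with long-only maps as witnesses of nonemptiness. Part (ii) uses the same explicit constant portfolio as the paper's Example~\ref{ex:ShortSellingConstruction}: short asset $i$, spread the proceeds over the other assets, and secure positivity from uniform bounds on $q_j/p_j$. You also derive those bounds from $\partial K\cap\partial\Delta^{(n)}=\varnothing$ via compactness of $\overline{K}$, a step the paper only asserts.
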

	\begin{proof}
		\begin{enumerate}
			\item[(i)] Convexity follows immediately from the linearity of the inner product. It is immediate that the set of long-only portfolios is contained in $\mathcal{BP}_i(D)$, as the inner product of non-negative vectors with strictly positive relative prices is strictly positive.
			\item[(ii)] The bankruptcy-proof condition $\langle \pi(p), q/p \rangle > 0$ imposes a strict linear inequality on $\pi(p)$ for each pair $(p,q)$. Consequently, a long-only portfolio with $\pi_i(p)=0$ can be perturbed, for sufficiently small $\varepsilon>0$, into a portfolio with $\pi_i(p)=-\varepsilon$ while preserving strict positivity. This is possible because Assumption \ref{ass:Diversity} implies that the family of market weight ratios $\{q/p:\ p,q\in K\}$ is bounded in $\mathbb{R}^n$.
		\end{enumerate}
	\end{proof}
	
	\begin{example}[Construction of a Short-Selling Strategy]
		\label{ex:ShortSellingConstruction}
		To explicitly demonstrate the existence of short-selling strategies in $\mathcal{BP}_1(K)$ (as guaranteed by Proposition \ref{prop:Existence}), we utilize the bounds on market transitions. By Assumption \ref{ass:Diversity}, there exists a constant $M > 1$ such that the market weight ratios are bounded by $1/M \le q_j/p_j \le M$ for all $1\le j\le n$.
		
		Consider a constant portfolio $\pi$ defined by:
		\[
		\pi_1 = -\frac{1}{2M(M - 1/M)}, \quad \text{and} \quad \pi_j = \frac{1 - \pi_1}{n - 1} \quad \forall j \ge 2.
		\]
		Here, asset 1 is shorted ($\pi_1 < 0$) and the proceeds are distributed among the remaining assets. To verify this is bankruptcy-proof, we analyze the worst-case scenario where the shorted asset appreciates maximally ($q_1/p_1 = M$) and the long assets depreciate maximally ($q_j/p_j = 1/M$):
		\[
		\begin{aligned}
			\left\langle \pi, \frac{q}{p} \right\rangle &= \pi_1 \frac{q_1}{p_1} + \sum_{j=2}^{n} \pi_j \frac{q_j}{p_j} \\
			&\ge \pi_1 M + (1-\pi_1)\frac{1}{M} \\
			&= \pi_1 \left( M - \frac{1}{M} \right) + \frac{1}{M}.
		\end{aligned}
		\]
		Substituting the value of $\pi_1$:
		\[
		\left\langle \pi, \frac{q}{p} \right\rangle \ge -\frac{1}{2M(M - 1/M)} \cdot \left( M - \frac{1}{M} \right) + \frac{1}{M} = -\frac{1}{2M} + \frac{1}{M} = \frac{1}{2M} > 0.
		\]
		Thus, $\left\langle \pi, \frac{q}{p}\right\rangle$ remains strictly positive for all valid market weight ratios, confirming $\pi\in\mathcal{BP}_1(K)$.
	\end{example}
	
	\begin{prop}[Monotonicity with respect to Market Uncertainty]
		\label{prop:Monotonicity}
		Let $D_1$, $D_2$ be two nonempty open convex subsets of $\Delta^{(n)}$. If $D_1 \subset D_{2}$, then the corresponding sets of bankruptcy-proof portfolios are decreasing:
		\begin{equation}
			\mathcal{BP}(D_2) \subset \mathcal{BP}(D_1).
		\end{equation}
	\end{prop}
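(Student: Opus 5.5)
Since $\mathcal{BP}(D)$ consists of maps $\pi:D\to H$ and $\mathcal{BP}(D_1)$ of maps $D_1\to H$, the inclusion $\mathcal{BP}(D_2)\subset\mathcal{BP}(D_1)$ is read through restriction: every $\pi\in\mathcal{BP}(D_2)$ restricts to an element $\pi|_{D_1}$ of $\mathcal{BP}(D_1)$. The proof is a direct verification from Definition~\ref{def:Bankruptcy-ProofPortfolio}, using only the fact that the defining condition quantifies universally over pairs of points of the domain.

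Fix $\pi\in\mathcal{BP}(D_2)$. By definition, $\langle \pi(p), q/p\rangle>0$ for every pair $(p,q)\in D_2\times D_2$. Consider the restriction $\pi|_{D_1}:D_1\to H$. It is well defined, and it takes values in $H$ because $\pi$ does. Take arbitrary $p,q\in D_1$. Since $D_1\subset D_2$, we have $p,q\in D_2$. Hence
\[
\left\langle \pi|_{D_1}(p),\frac{q}{p}\right\rangle=\left\langle \pi(p),\frac{q}{p}\right\rangle>0 .
\]
This is exactly the condition \eqref{eq:NoBankruptcyCondition} on $D_1$, so $\pi|_{D_1}\in\mathcal{BP}(D_1)$.

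There is no real obstacle. The one point to address explicitly is that the two sets are formally sets of maps on different domains, which is why the inclusion has to be read via restriction. Everything else follows because the bankruptcy-proof constraint is a family of strict linear inequalities indexed by $D\times D$, and shrinking the domain can only remove constraints. The same argument also gives $\mathcal{BP}_i(D_2)\subset\mathcal{BP}_i(D_1)$: the extra sign conditions $\pi_j(p)\ge 0$ for $j\ne i$ hold for all $p\in D_2$, so in particular for all $p\in D_1$.
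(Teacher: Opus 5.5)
Your proof is correct and is the same direct verification the paper gives: the defining inequality holds for all pairs in $D_2\times D_2$, hence in particular for all pairs in $D_1\times D_1$. Your explicit remark that the inclusion has to be read via restriction $\pi\mapsto\pi|_{D_1}$ is a useful clarification of a point the paper leaves implicit.
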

	
	\begin{proof}
		Let $\pi \in \mathcal{BP}(D_2)$. By definition, $\langle \pi(p), q/p \rangle > 0$ for all pairs $p, q \in D_2$. Since $D_1 \subset D_2$, the inequality holds in particular for all $p,q\in D_1$. Thus, $\pi \in \mathcal{BP}(D_1)$.
	\end{proof}
	
	\begin{prop}[Convergence to Long-Only]
		\label{prop:ConvergenceToLongOnly}
		Let $\{D_m\}_{m=1}^{\infty}$ be a sequence of strictly increasing nonempty open convex subsets of $\Delta^{(n)}$, such that $\lim_{m\to\infty}D_m=\cup_{m=1}^{\infty}D_m=\Delta^{(n)}$. Then, a portfolio map $\pi:\Delta^{(n)}\to H$ is bankruptcy-proof on $\Delta^{(n)}$ if and only if it is long-only. Equivalently,
		the set of bankruptcy-proof portfolios on $\Delta^{(n)}$ coincides exactly with the set of long-only portfolios on the simplex, denoted by
		\[
		\mathcal{LO}(\Delta^{(n)}) \coloneqq \left\{ \pi : \Delta^{(n)} \rightarrow \overline{\Delta^{(n)}} \right\}.
		\]
		That is,
		\begin{equation}
			\mathcal{BP}(\Delta^{(n)}) = \bigcap_{m=1}^{\infty} \mathcal{BP}(D_m)= \mathcal{LO}(\Delta^{(n)}).
		\end{equation}
	\end{prop}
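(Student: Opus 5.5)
The plan is to prove two inclusions: $\mathcal{LO}(\Delta^{(n)})\subset\mathcal{BP}(\Delta^{(n)})$ and $\mathcal{BP}(\Delta^{(n)})\subset\mathcal{LO}(\Delta^{(n)})$. The intersection identity will then come from Proposition~\ref{prop:Monotonicity} together with a pointwise reformulation of the bankruptcy-proof condition. Throughout, elements of $\mathcal{BP}(D_m)$ are understood as restrictions of maps to $D_m$. In this sense, $\pi\in\bigcap_m\mathcal{BP}(D_m)$ means that $\pi:\Delta^{(n)}\to H$ satisfies \eqref{eq:NoBankruptcyCondition} for every pair $p,q$ lying in a common $D_m$.

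\textbf{Step 1: $\mathcal{BP}(\Delta^{(n)})=\bigcap_m \mathcal{BP}(D_m)$.} The inclusion ``$\subset$'' is Proposition~\ref{prop:Monotonicity}. For ``$\supset$'', fix $p,q\in\Delta^{(n)}$. Since the $D_m$ are increasing and exhaust the simplex, there are indices $m_1,m_2$ with $p\in D_{m_1}$ and $q\in D_{m_2}$. Both points then lie in $D_{\max(m_1,m_2)}$, so \eqref{eq:NoBankruptcyCondition} holds for the pair $(p,q)$. Here monotonicity of the sequence is essential; strictness is not used.

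\textbf{Step 2: long-only implies bankruptcy-proof.} If $\pi(p)\in\overline{\Delta^{(n)}}$, then all weights are nonnegative and at least one is positive, because the weights sum to one. Since $q/p$ has strictly positive entries, $\langle \pi(p),q/p\rangle>0$. This is the argument already used in Proposition~\ref{prop:Existence}(i).

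\textbf{Step 3: bankruptcy-proof implies long-only.} This is the main step. Suppose that for some $p\in\Delta^{(n)}$ and some index $i$ we have $\pi_i(p)=-\delta<0$. I will construct $q\in\Delta^{(n)}$ concentrated near the vertex $e_i$. Take
\[
q^{\varepsilon}=(1-\varepsilon)e_i+\varepsilon\bar e,\qquad \bar e=\tfrac1n(1,\dots,1).
\]
For $\varepsilon\in(0,1)$ this point lies in the open simplex, with $q^\varepsilon_i=1-\varepsilon+\varepsilon/n$ and $q^\varepsilon_j=\varepsilon/n$ for $j\ne i$. Then
\[
\Bigl\langle \pi(p),\frac{q^\varepsilon}{p}\Bigr\rangle
=-\delta\,\frac{1-\varepsilon+\varepsilon/n}{p_i}+\frac{\varepsilon}{n}\sum_{j\neq i}\frac{\pi_j(p)}{p_j}.
\]
As $\varepsilon\to0$, the right-hand side tends to $-\delta/p_i<0$. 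So for small $\varepsilon$ the pair $(p,q^\varepsilon)$ violates \eqref{eq:NoBankruptcyCondition}, which is a contradiction. Hence every weight $\pi_i(p)$ is nonnegative, and $\pi(p)\in\overline{\Delta^{(n)}}$.

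The only delicate point is interpretive: making sure that membership in the intersection is read as a condition on a single global map $\pi$, as fixed at the start. Once that is settled, the argument needs nothing beyond the fact that the exhausting sequence eventually contains points arbitrarily close to each vertex. The conceptual message is that short selling fails exactly because the next-period ratio $q_i/p_i$ is unbounded on the full simplex, in contrast to the bound $M$ available under Assumption~\ref{ass:Diversity} in Example~\ref{ex:ShortSellingConstruction}.
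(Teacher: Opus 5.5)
Your proposal is correct and follows the paper's proof in substance. Both arguments rest on four ingredients: Proposition~\ref{prop:Monotonicity} for one inclusion, a next-period weight $q$ pushed toward the vertex $e_i$ to contradict a negative weight, the fact that any pair $p,q$ lies in a single $D_M$, and strict positivity of $q/p$ for long-only portfolios. The only difference is organizational: you prove $\mathcal{BP}(\Delta^{(n)})=\bigcap_m\mathcal{BP}(D_m)$ directly and then argue on the full simplex, whereas the paper closes a cycle of three inclusions with the covering step folded into the contradiction argument.
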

	
	\begin{proof}
		We aim to show the equality between the limit set and the long-only set.
		
		(i) $\mathcal{BP}(\Delta^{(n)}) \subset \bigcap_{m=1}^{\infty} \mathcal{BP}(D_m)$: Proposition \ref{prop:Monotonicity} implies $\mathcal{BP}(\Delta^{(n)}) \subset \mathcal{BP}(D_m)$ for any $m\in \mathbb{N}$. Hence $\mathcal{BP}(\Delta^{(n)}) \subset \bigcap_{m=1}^{\infty} \mathcal{BP}(D_m)$.
		
		(ii) $\bigcap_{m=1}^{\infty} \mathcal{BP}(D_m) \subset \mathcal{LO}(\Delta^{(n)})$: Since $\pi\in \mathcal{BP}(D_m)$ means $\pi$ is defined on $D_m$, combined with $\cup_{m=1}^{\infty}D_m=\Delta^{(n)}$, $\pi\in\bigcap_{m=1}^{\infty} \mathcal{BP}(D_m)$ implies $\pi$ is defined on the entire simplex. Now we complete the proof by contradiction. Assume $\pi \in \bigcap_{m=1}^{\infty} \mathcal{BP}(D_m)$ but $\pi \notin \mathcal{LO}(\Delta^{(n)})$. Then there exist a market weight $p^* \in \Delta^{(n)}$ and an index $k$ such that $\pi_k(p^*) < 0$.
		
		Consider a sequence $\{q^{(l)}\}_{l=1}^{\infty} \subset \Delta^{(n)}$ converging to the vertex $e_k$. As $l \to \infty$, the ratio vector components satisfy $q^{(l)}_k/p^*_k \to 1/p^*_k$ and $q^{(l)}_j/p^*_j \to 0$ for $j \neq k$. Consequently,
		\[
		\lim_{l \to \infty} \left\langle \pi(p^*), \frac{q^{(l)}}{p^*} \right\rangle = \frac{\pi_k(p^*)}{p^*_k} < 0.
		\]
		Therefore, there exists an index $L$ such that $\langle \pi(p^*), q^{(L)}/p^* \rangle < 0$.
		
		Since $\{D_m\}$ covers $\Delta^{(n)}$, there exists an integer $M$ such that both $p^*$ and $q^{(L)}$ belong to $D_M$. This implies that $\pi \notin \mathcal{BP}(D_M)$, contradicting the assumption that $\pi \in \bigcap_{m=1}^{\infty} \mathcal{BP}(D_m)$.
		
		Thus, we must have $\pi_k(p) \ge 0$ for all $k$ and all $p\in \Delta^{(n)}$, proving $\pi \in \mathcal{LO}(\Delta^{(n)})$.
		
		(iii) $\mathcal{LO}(\Delta^{(n)}) \subset \mathcal{BP}(\Delta^{(n)})$: This follows immediately from \eqref{eq:NoBankruptcyCondition}, since $\pi(p)\in\overline{\Delta^{(n)}}$ and $q/p$ has strictly positive components for all $p,q\in \Delta^{(n)}$.
	\end{proof}
	\subsection{The Necessity of Dynamic Rebalancing: Failure of Static Weights}
	\label{subsec:CWP_MCM}
	
	Having established the structural properties of bankruptcy-proof portfolios, we now address the conditions required for arbitrage. Before analyzing complex strategies, it is instructive to examine the simplest class of portfolios: the \emph{constant-weighted portfolios} (CWPs). To rigorously analyze their potential for arbitrage in a market with short selling, we first recall a fundamental geometric condition formalized by Pal and Wong \cite{Wong2016TheGeometryofRelativeArbitrage}: \emph{multiplicative cyclical monotonicity} (MCM).
	
	\subsubsection{Multiplicative Cyclical Monotonicity as a Necessary Condition}
	
	In the context of relative arbitrage, a robust strategy should not systematically lose value over market cycles. This intuition is mathematically captured by the MCM property. We adapt the definition from \cite{Wong2016TheGeometryofRelativeArbitrage} to our admissible framework.
	
	\begin{definition}[Multiplicative cyclical monotonicity]
			\label{def:MCM}
			Let $K\subset\Delta^{(n)}$ be a nonempty set, and let $\pi:K\to H$ be a bankruptcy-proof portfolio. We say that $\pi$ satisfies \emph{multiplicative cyclical monotonicity} (MCM) on $K$ if for every cycle $\{\mu(s)\}_{s=0}^{m+1}\subset K$ with $\mu(m+1)=\mu(0)$,
			\begin{equation}
				\prod_{s=0}^{m}\left\langle \pi(\mu(s)),\frac{\mu(s+1)}{\mu(s)}\right\rangle \ge 1.
			\end{equation}
	\end{definition}
	
	The significance of MCM lies in its role as a prerequisite for pseudo-arbitrage. As shown in \cite{Wong2016TheGeometryofRelativeArbitrage}, violating MCM implies a systematic decay in wealth over repeated cycles.
	
	\begin{lemma}[Pseudo-arbitrage and MCM, \cite{Wong2016TheGeometryofRelativeArbitrage}]
		\label{lem:Pseudo-arbitrageMCM}
		Let $K$ be the market state space satisfying Assumption \ref{ass:Diversity}. If a bankruptcy-proof portfolio $\pi:K \rightarrow H$ fails to satisfy the MCM property on $K$, then there exists a market weight sequence taking values in a finite subset of $K$ such that $V_\pi(t) \to 0$ as $t \to \infty$. Consequently, $\pi$ can be a pseudo-arbitrage on $K$ only if it satisfies the MCM property on $K$.
	\end{lemma}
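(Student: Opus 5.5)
We argue by contraposition: if MCM fails on $K$, we build a market path along which the relative wealth decays geometrically.

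\textbf{Step 1: a losing cycle.} If $\pi$ violates MCM on $K$, Definition~\ref{def:MCM} gives a cycle $\mu(0),\mu(1),\ldots,\mu(m),\mu(m+1)=\mu(0)$ in $K$ with
\[
\rho:=\prod_{s=0}^{m}\left\langle \pi(\mu(s)),\frac{\mu(s+1)}{\mu(s)}\right\rangle<1 .
\]
Since $\pi$ is bankruptcy-proof and all points of the cycle lie in $K$, condition \eqref{eq:NoBankruptcyCondition} makes every factor strictly positive. Hence $0<\rho<1$, and $\log\rho$ is a finite negative number. This positivity is the one point where the admissibility assumption is genuinely needed. Without it, $\rho$ could be nonpositive, and the wealth recursion \eqref{eq:RelativeValue} would lose its meaning as a positive multiplicative process.

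\textbf{Step 2: the periodic path.} Define the market weight sequence by repeating the cycle forever: $\mu(t):=\mu(t \bmod (m+1))$ for all $t\ge 0$. It takes values in the finite set $\{\mu(0),\ldots,\mu(m)\}\subset K$. Because $\mu(m+1)=\mu(0)$, the transition out of index $m$ returns to $\mu(0)$, so consecutive pairs repeat with period $m+1$.

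\textbf{Step 3: the wealth estimate.} By \eqref{eq:RelativeValue}, for $t=k(m+1)+r$ with $0\le r\le m$,
\[
V_\pi(t)=\rho^{k}\prod_{s=0}^{r-1}\left\langle \pi(\mu(s)),\frac{\mu(s+1)}{\mu(s)}\right\rangle .
\]
The remaining partial product takes only finitely many values. It is therefore bounded above by some constant $B$ (which is also positive). This gives $V_\pi(t)\le B\rho^{\lfloor t/(m+1)\rfloor}\to 0$.

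\textbf{Step 4: the consequence for pseudo-arbitrage.} Along this sequence in $K$ we have $\log V_\pi(t)\to-\infty$. So no constant $C$ can satisfy condition 1 of Definition~\ref{def:pseudo-arbitrage}, and $\pi$ cannot be a pseudo-arbitrage on $K$. Contrapositively, every pseudo-arbitrage on $K$ satisfies MCM.

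The argument relies on only two facts. First, Step 1 needs the strict positivity of each factor, which admissibility supplies. Second, Step 2 needs the periodic path to stay in $K$, which holds because the cycle was taken inside $K$.
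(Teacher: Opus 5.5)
Your proof is correct and follows the same route as the paper's sketch: take a cycle in $K$ whose one-cycle return $\rho$ is strictly between $0$ and $1$ (positive by bankruptcy-proofness), then repeat it periodically so that $V_\pi(k(m+1))=\rho^k\to 0$. Your Step 3, which bounds the partial products at intermediate times, also upgrades the paper's subsequence computation to the full limit $V_\pi(t)\to 0$ that the lemma asserts.
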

	
	\begin{proof}[Sketch of Proof]
		The argument follows \cite[Lemma~3]{Wong2016TheGeometryofRelativeArbitrage}. If MCM fails, there exists a cycle over which the one-cycle relative return $\eta<1$. Constructing a market sequence that periodically repeats this cycle yields $V_\pi(k(m+1))=\eta^k\to 0$ as $k\to\infty$. This violates the bounded downside condition required for pseudo-arbitrage.
	\end{proof}
	
	\subsubsection{Failure of CWPs under Short Selling}
	Before presenting the geometric formalism, it is instructive to explain why constant-weighted portfolios (CWPs) with short positions are unlikely to provide a robust mechanism for arbitrage-like growth. In continuous-time models of equity markets, the logarithmic growth rate of a self-financing portfolio admits a decomposition into a weighted average of the individual stock growth rates and an additional correction term that depends on the covariation structure of returns; see, for example, Fernholz, Karatzas and Kardaras \cite{Fernholz2005Diversity}. In the long-only case, this correction term is nonnegative and represents a volatility-capture effect. Once negative weights are allowed, however, the same correction term need not remain nonnegative and can instead act as a systematic volatility drag on long-run compounded performance.
	
	A particularly transparent manifestation of this effect appears in constant-proportion strategies with negative exposure. Avellaneda and Zhang \cite{AvellanedaZhang2010LETF} derive an explicit formula for leveraged and inverse funds, in which the value process contains a multiplicative factor of the form $\exp\{(\beta-\beta^2)\int_0^t \sigma_s^2\,ds/2\}$. For $\beta\notin[0,1]$, and in particular for $\beta<0$, the coefficient $(\beta-\beta^2)$ is strictly negative, resulting in an accumulating penalty proportional to realized variance. This illustrates that maintaining a static short (negative-weight) component typically requires sufficiently large favorable cumulative returns of the underlying (e.g., a large directional move) to offset this variance-related drag. Consequently, this variance-drag mechanism cautions against expecting CWPs with short positions to deliver pseudo-arbitrage in a structurally robust way, which motivates the need for dynamically rebalanced admissible strategies.
	
	Equipped with this intuition and the MCM criterion, we now formally examine these strategies. Recall that in the functional generation framework, a CWP is constructed \cite{Fernholz2002SPT} by $\Phi(p) = \prod_{i=1}^n p_i^{\pi_i}$, leading to constant weights $\pi(p) \equiv \pi$.
	
	While long-only CWPs are known to satisfy MCM, the introduction of short positions fundamentally alters the geometry. In particular, along directions associated with shorted assets, the generating potential may become locally convex, which is consistent with the above ``volatility drag'' intuition in the sense that fluctuations can reduce, rather than increase, the relative value. We formalize this by showing that CWPs with short positions violate the MCM condition over simple cycles.
	
	\begin{prop}[Failure of MCM for Short-Selling CWPs]
		\label{prop:MCM_Failure}
		Let $K$ be the market state space satisfying Assumption \ref{ass:Diversity}, and let $\pi\in \mathcal{BP}(K)$ be a constant-weighted portfolio (i.e., $\pi(p)\equiv \pi$ for all $p\in K$). Consider a cycle $p \to q \to p$ in $K$, where
		$q = p + t(e_1 - p)$ for some $t \in (0,1)$.
		If the portfolio holds a short position in asset $1$ (i.e., $\pi_1 < 0$), then the relative value over this cycle satisfies $V_\pi < 1$. Consequently, $\pi$ violates MCM.
	\end{prop}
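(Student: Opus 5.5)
We compute the two one-period relative returns along the cycle $p\to q\to p$ in closed form and show that their product is strictly less than $1$. The key observation is that along the ray $q=p+t(e_1-p)=(1-t)p+te_1$, the ratio vector $q/p$ has only two distinct values. Indeed, $q_1/p_1=(1-t)+t/p_1=:a$ and $q_j/p_j=1-t=:b$ for all $j\ge 2$. Since $p_1\in(0,1)$ and $t\in(0,1)$, we have $a>1>b>0$, so in particular $a\neq b$. On the return leg the ratios are simply reciprocals: $p_1/q_1=1/a$ and $p_j/q_j=1/b$ for $j\ge 2$.

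Writing $w:=\pi_1$ and using $\sum_{j\ge2}\pi_j=1-w$, the two factors in \eqref{eq:RelativeValue} become
\[
R_1=\left\langle \pi,\frac{q}{p}\right\rangle = wa+(1-w)b,\qquad R_2=\left\langle \pi,\frac{p}{q}\right\rangle=\frac{w}{a}+\frac{1-w}{b}.
\]
Both factors are strictly positive because $\pi\in\mathcal{BP}(K)$ and $p,q\in K$; here I take the cycle to lie in $K$, as the statement stipulates. Hence $V_\pi=R_1R_2$ is a well-defined positive number. Expanding the product gives
\[
R_1R_2=w^2+(1-w)^2+w(1-w)\left(\frac{a}{b}+\frac{b}{a}\right)=1+w(1-w)\left(\frac{a}{b}+\frac{b}{a}-2\right),
\]
where I used the identity $w^2+(1-w)^2+2w(1-w)=1$.

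The sign analysis then finishes the computation. By AM--GM, $\frac{a}{b}+\frac{b}{a}-2=\frac{(a-b)^2}{ab}>0$, since $a\neq b$. Since $w=\pi_1<0$, we have $1-w>1$ and therefore $w(1-w)<0$. Thus $V_\pi=R_1R_2<1$. This exhibits a cycle, of length $m=1$ in Definition~\ref{def:MCM}, on which the product of one-period returns is less than $1$, so $\pi$ violates MCM. By Lemma~\ref{lem:Pseudo-arbitrageMCM}, such a $\pi$ therefore cannot be a pseudo-arbitrage.

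The only delicate point is bookkeeping rather than analysis. One must check that the two-valued structure of $q/p$ really collapses each inner product to a two-term expression, and one must invoke bankruptcy-proofness so that the individual factors are positive and $V_\pi$ is meaningful. The essential mechanism is the convex function $x\mapsto x+1/x$ evaluated at $a/b$. This mirrors the volatility-drag coefficient $w-w^2<0$ for $w\notin[0,1]$ discussed before the proposition.
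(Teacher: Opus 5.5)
Your proof is correct and follows essentially the paper's argument: both compute $V_\pi=\langle \pi,q/p\rangle\langle \pi,p/q\rangle$ in closed form and read off the sign from $\pi_1(1-\pi_1)<0$. Substituting $a-b=t/p_1$ and $ab=(1-t)\,[p_1(1-t)+t]/p_1$ into your expression $1+w(1-w)(a-b)^2/(ab)$ gives exactly the paper's formula $1+\frac{t^2\pi_1(1-\pi_1)}{p_1(1-t)[p_1(1-t)+t]}$, and your two-valued ratio structure with $(a-b)^2/(ab)$ makes it transparent why the factor multiplying $\pi_1(1-\pi_1)$ is positive.
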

	
	\begin{proof}
		The relative value over the cycle $p \to q \to p$ is
		\[
		V_\pi = \left\langle \pi, \frac{q}{p} \right\rangle \left\langle \pi, \frac{p}{q} \right\rangle.
		\]
		A direct computation using $q=p+t(e_1-p)$ yields
		\begin{equation}
			V_\pi
			= 1 + \frac{t^2 \pi_1 (1 - \pi_1)}{p_1(1-t)\,[p_1(1-t) + t]}.
		\end{equation}
		Since $\pi_1 < 0$, we have $\pi_1(1-\pi_1) < 0$. The denominator is strictly positive for $p \in K$. Therefore, the second term is strictly negative, implying $V_\pi < 1$.
		This violates the MCM condition in Definition \ref{def:MCM}.
	\end{proof}
	
	\begin{coro}[Incompatibility of CWP and Arbitrage]
		\label{coro:CWP_Arbitrage}
		Let $K$ be the market state space satisfying Assumption \ref{ass:Diversity} and let $\pi$ be a constant-weighted bankruptcy-proof portfolio. If $\pi$ involves a short position in any asset, then $\pi$ cannot be a pseudo-arbitrage on $K$.
	\end{coro}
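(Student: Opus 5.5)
The plan is to chain Proposition~\ref{prop:MCM_Failure} with Lemma~\ref{lem:Pseudo-arbitrageMCM}. Let $\pi\in\mathcal{BP}(K)$ be constant and suppose $\pi_k<0$ for some index $k$. After relabeling assets we may take $k=1$. Proposition~\ref{prop:MCM_Failure} is stated for asset $1$, but the computation is symmetric under permutations of coordinates, so this costs nothing.

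\textbf{Step 1: build an admissible cycle inside $K$.} Fix any $p\in K$. Because $K$ is open relative to $H$ and the direction $e_1-p$ lies in $T$, we have $q=p+t(e_1-p)\in K$ for all sufficiently small $t\in(0,1)$. We fix one such $t$. The two-step cycle $p\to q\to p$ then lies in $K$, and it satisfies $\mu(2)=\mu(0)$.

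\textbf{Step 2: show MCM fails.} By Proposition~\ref{prop:MCM_Failure}, the cycle product
\[
V_\pi=\left\langle \pi,\frac{q}{p}\right\rangle\left\langle \pi,\frac{p}{q}\right\rangle
\]
is strictly less than $1$. Both factors are positive because $\pi$ is bankruptcy-proof on $K$. Hence $\pi$ violates the MCM condition of Definition~\ref{def:MCM} on $K$.

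\textbf{Step 3: conclude.} By Lemma~\ref{lem:Pseudo-arbitrageMCM}, a bankruptcy-proof portfolio that violates MCM on $K$ cannot be a pseudo-arbitrage on $K$. Concretely, the market sequence alternating $p,q,p,q,\dots$ stays in $K$, and along it $V_\pi(2k)=V_\pi^k\to 0$. So $\log V_\pi(t)\to-\infty$, and no constant $C$ can satisfy condition~1 of Definition~\ref{def:pseudo-arbitrage}.

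\textbf{Main obstacle.} The only point needing care is domain compatibility. Definition~\ref{def:pseudo-arbitrage} considers $\pi$ defined on some $D\supset K$, whereas the lemma works on $K$. Restricting $\pi$ to $K$ keeps it constant and bankruptcy-proof by Proposition~\ref{prop:Monotonicity}, and the bounded-downside condition only involves sequences in $K$, so the restriction is harmless. Everything else is a direct citation.
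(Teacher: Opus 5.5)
Your proposal is correct and follows the paper's intended route: the corollary has no separate proof in the paper and is meant to follow directly from Proposition~\ref{prop:MCM_Failure}, where a short position in asset $1$ makes the cycle $p\to q\to p$ have product $<1$, combined with Lemma~\ref{lem:Pseudo-arbitrageMCM}. Your extra care in choosing $t$ small enough that $q=p+t(e_1-p)\in K$, and in restricting $\pi$ to $K$, makes explicit points the paper leaves implicit.
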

	
	\paragraph{Transition to Functional Generation.}
	Corollary \ref{coro:CWP_Arbitrage} reveals a fundamental limitation: static short positions are geometrically incompatible with cyclical market stability. Unlike long-only strategies where constant weights can capture growth, admissible strategies that involve short selling require dynamic rebalancing in order to satisfy the MCM condition. To systematically construct such dynamic strategies, we now turn to the framework of \emph{functionally generated portfolios}, which provides the geometric machinery to engineer bankruptcy-proof portfolios with short-selling positions.
	
	\section{Geometric Characterization of Admissible Arbitrage}
	\label{sec:GeometricCharacterization}
	This section generalizes the functional generation framework \cite{Wong2016TheGeometryofRelativeArbitrage} to admissible long-short portfolios. While retaining the classical algebraic structure (gradient maps and divergence decomposition), we relax the domain of generation from the global simplex $\Delta^{(n)}$ to its open convex subset $D$. This shift allows us to characterize a richer class of strategies that exploit short positions while maintaining solvency.
	
	\subsection{Functionally Generated Portfolios and Pseudo-Arbitrage}
	\label{subsec:FGP_PseudoArbitrage}
	We begin by defining the generation mechanism on a general domain.
	
	\begin{definition}[Functionally Generated Portfolio]
		\label{def:FGP_Admissible}
		Let $D \subset \Delta^{(n)}$ be a nonempty open convex set. A portfolio map $\pi: D \to H$ is said to be \emph{generated} by a positive concave function $\Phi: D \rightarrow (0, \infty)$ if it satisfies the supergradient inequality:
		\begin{equation}
			\left\langle \pi(p), \frac{q}{p} \right\rangle = 1 + \left\langle \frac{\pi(p)}{p}, q-p \right\rangle \ge \frac{\Phi(q)}{\Phi(p)}, \quad \forall p, q \in D.
			\label{eq:MCMIneq}
		\end{equation}
		The function $\Phi$ is termed the \emph{generating function} of $\pi$.
	\end{definition}
	
	Standard convex analysis guarantees that the superdifferential of a concave function is nonempty on the relative interior of its domain \cite[Theorem 23.4]{Rockafellar1970ConvexAnalysis}. In our context, we typically assume $K \subset D$ so that the strategy is well defined on the market state space.
	
	\begin{remark}
		Functionally generated portfolios are inherently \emph{bankruptcy-proof} on their domain. Since $\Phi$ is strictly positive, the inequality \eqref{eq:MCMIneq} implies $\langle \pi(p), q/p \rangle \ge \Phi(q)/\Phi(p) > 0$ for all $p, q \in D$.
	\end{remark}
	
	To understand the geometric mechanics of these strategies, particularly how they generate short positions, we restrict our attention to differentiable generating functions. In this setting, the portfolio weights admit an explicit representation in terms of the logarithmic gradient.
	
	\begin{prop}[Gradient representation]
			\label{prop:FGExpression}
			Let $D$ be a nonempty open convex subset of $\Delta^{(n)}$, and let $\Phi:D\to(0,\infty)$ be a differentiable concave function. If $\pi:D\to H$ is generated by $\Phi$ in the sense of Definition~\ref{def:FGP_Admissible}, then for every $p\in D$ and each $i=1,\ldots,n$,
			\begin{equation}
				\pi_i(p)=p_i\left(1+\left\langle\nabla\varphi(p),\,e_i-p\right\rangle\right),
				\label{eq:FGDifferential_Local}
			\end{equation}
			where $\varphi:=\log\Phi$, and $\nabla\varphi(p)\in T$ denotes the gradient of $\varphi$ relative to the affine space $H$.
	\end{prop}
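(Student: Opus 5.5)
The plan is to show that the supergradient inequality \eqref{eq:MCMIneq} forces $\pi(p)/p$ to coincide, up to the appropriate normalization, with a supergradient of $\varphi=\log\Phi$ at $p$. When $\Phi$ is differentiable, that supergradient is unique, and the formula then follows by algebra.

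\textbf{Step 1 (log-linearization).} Fix $p\in D$ and set $v:=\pi(p)/p\in\mathbb{R}^n$. For $q\in D$ write $q=p+h$ with $h\in T$ small. Then \eqref{eq:MCMIneq} reads
\[
1+\langle v,h\rangle\ \ge\ \frac{\Phi(p+h)}{\Phi(p)} .
\]
The relation $\langle \pi(p),q/p\rangle = 1+\langle v,q-p\rangle$ holds because $\sum_i\pi_i(p)=1$. Choose $h=s\,d$ with $d\in T$ fixed and $s\to 0^{\pm}$. Differentiability of $\Phi$ gives $\Phi(p+sd)/\Phi(p)=1+s\langle\nabla\varphi(p),d\rangle+o(s)$, since $\nabla\Phi/\Phi=\nabla\varphi$. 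Dividing by $s>0$ and letting $s\downarrow 0$ yields $\langle v,d\rangle\ge\langle\nabla\varphi(p),d\rangle$. Doing the same with $s<0$ yields the reverse inequality. Hence $\langle v-\nabla\varphi(p),d\rangle=0$ for all $d\in T$, which means $v=\nabla\varphi(p)+c\mathbf 1$ for some scalar $c=c(p)$. Openness of $D$ relative to $H$ is what allows two-sided directions.

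\textbf{Step 2 (fixing the constant).} From $\pi_i(p)=p_i(\nabla_i\varphi(p)+c)$ and $\sum_i\pi_i(p)=1$, together with $\sum_i p_i=1$, we get
\[
c=1-\langle\nabla\varphi(p),p\rangle .
\]
Since $\nabla\varphi(p)\in T$, we have $\nabla_i\varphi(p)=\langle\nabla\varphi(p),e_i\rangle$. Therefore
\[
\pi_i(p)=p_i\bigl(1+\langle\nabla\varphi(p),e_i-p\rangle\bigr),
\]
which is \eqref{eq:FGDifferential_Local}.

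The main point needing care is Step 1: identifying the relative gradient on $H$ and ensuring that only the component of $v$ in $T$ is determined. This is why the free constant $c$ appears and is then pinned down by the budget constraint. Concavity is not actually needed for this direction; it only guarantees, via \cite[Theorem 23.4]{Rockafellar1970ConvexAnalysis}, that such a $\pi$ exists. If desired, one can remark conversely that the formula does satisfy \eqref{eq:MCMIneq}: concavity of $\Phi$ gives $\Phi(q)\le\Phi(p)+\langle\nabla\Phi(p),q-p\rangle$, and dividing by $\Phi(p)$ gives the inequality.
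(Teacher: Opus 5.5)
Your proof is correct and is essentially the paper's argument: both take two-sided difference quotients of \eqref{eq:MCMIneq} along tangent directions at $p$, which turns the inequality into an equality with the directional derivative of $\varphi$. The differences are cosmetic: the paper tests only the directions $e_i-p$ and takes logarithms, with the budget constraint entering through $\langle \pi(p)/p,\,e_i-p\rangle=\pi_i(p)/p_i-1$ so that no free constant appears, whereas you test all $d\in T$, avoid the logarithm, and fix the normal component $c$ afterwards from $\sum_i\pi_i(p)=1$.
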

	
	\begin{proof}
			Fix $p\in D$ and $i\in\{1,\ldots,n\}$. Since $D$ is open relative to $H$, for all $h$ sufficiently close to $0$ we have $p+h(e_i-p)\in D$. Applying \eqref{eq:MCMIneq} with $q=p+h(e_i-p)$ gives
			\[
			1+h\left(\frac{\pi_i(p)}{p_i}-1\right)
			=1+\left\langle \frac{\pi(p)}{p},\,h(e_i-p)\right\rangle
			\ge \frac{\Phi(p+h(e_i-p))}{\Phi(p)},
			\]
			where we used $\sum_{j=1}^n \pi_j(p)=1$. Taking logarithms, dividing by $h$, and letting $h\to 0^\pm$, we obtain
			\[
			\frac{\pi_i(p)}{p_i}-1=D_{e_i-p}\log\Phi(p).
			\]
			Since $\varphi=\log\Phi$ is differentiable on $H$,
			\[
			D_{e_i-p}\varphi(p)=\langle \nabla\varphi(p),e_i-p\rangle.
			\]
			Hence
			\[
			\pi_i(p)=p_i\left(1+\langle \nabla\varphi(p),e_i-p\rangle\right).
			\]
	\end{proof}
	
	Equation \eqref{eq:FGDifferential_Local} is not merely an algebraic formula; it admits a precise geometric interpretation in terms of the supporting hyperplane of $\Phi$ \cite{Wong2019InfromationGeometryinPortfolio}. Let $L_p(x)$ denote the tangent plane of $\Phi$ at the market state $p$. The geometric intercept of this plane at the vertex $e_i$ is given by:
	\begin{equation}
		c_i \coloneqq L_p(e_i) = \Phi(p) + \langle \nabla \Phi(p), e_i - p \rangle.
	\end{equation}
	Using the relation $\nabla \varphi = \nabla \Phi / \Phi$, equation \eqref{eq:FGDifferential_Local} can be rewritten as $\pi_i(p) = \frac{p_i c_i}{\Phi(p)}$.
	
	This formulation explicitly reveals the mechanism of short selling: the sign of the portfolio weight $\pi_i(p)$ is determined solely by the sign of the geometric intercept $c_i$. If the tangent plane becomes sufficiently steep, typically due to a gradient blow-up near a boundary where $\Phi$ approaches zero, the intercept at a distant vertex may become negative ($c_i < 0$), thereby structurally enforcing a short position. This geometric intuition is visualized in the simplest case ($n=2$) in Figure \ref{fig:TangentLineGeometry}.
	
	\begin{figure}[htbp]
		\centering
		\begin{tikzpicture}[x=8cm, y=3cm]
			
			\def\xp{0.8}
			
			\pgfmathsetmacro{\yp}{1 - (\xp/0.9)^2}
			\pgfmathsetmacro{\slope}{-2 * \xp / (0.9*0.9)}
			
			\pgfmathsetmacro{\cOne}{\slope*(1-\xp) + \yp}
			\pgfmathsetmacro{\cTwo}{\slope*(0-\xp) + \yp}
			
			
			\draw[->, thick] (-0.1, 0) -- (1.15, 0) node[right] {$p_1$};
			\draw[->, thick] (0, -0.6) -- (0, 2.2) node[left] {$\Phi(p)$};
			
			\draw[thick] (0, 0.05) -- (0, -0.05) node[above] {$e_2 (p_1=0)$};
			\draw[thick] (1, 0.05) -- (1, -0.05) node[above] {$e_1 (p_1=1)$};
			\draw[dashed, gray] (1, -0.6) -- (1, 2.0);
			
			\draw[blue, very thick, domain=0:0.9, samples=100] plot (\x, {1 - (\x/0.9)^2});
			\node[blue] at (0.4, 0.3) {Concave Potential $\Phi$};
			
			\draw[red, thick] (-0.05, {\slope*(-0.05-\xp) + \yp}) -- (1.05, {\slope*(1.05-\xp) + \yp});
			\node[red, font=\small] at (0.5, 1.2) {Tangent Line $L_p$};
			
			\draw[dotted, thick] (\xp, \yp) -- (\xp, 0);
			\fill[black] (\xp, \yp) circle (2pt) node[above right] {$p$};
			
			
			\fill[purple] (0, \cTwo) circle (2.5pt);
			\node[purple, left, yshift=-5pt] at (0, \cTwo) {$c_2 > 0$};
			
			\fill[purple] (1, \cOne) circle (2.5pt);
			\node[purple, right, yshift=-12pt] at (1, \cOne) {$c_1 < 0$};
			
			\draw[->, thick, orange] (0.75, -0.4) -- (0.98, \cOne);
			\node[orange, left, align=right, font=\small] at (0.75, -0.4) {Intercept $c_1 < 0$\\$\implies$ short selling};
			
		\end{tikzpicture}
		\caption{Geometric intuition of portfolio weights in the 2D case ($n=2$). The potential $\Phi$ (blue curve) vanishes at $p_1=0.9$. The tangent line (red) at point $p=0.8$ yields a positive intercept $c_2$ at $e_2$ but a strictly negative intercept $c_1$ at $e_1$, structurally enforcing a short position in asset 1.}
		\label{fig:TangentLineGeometry}
	\end{figure}
	
	It is important to note that the differentiability of $\Phi$ is not a strict requirement for strategy generation. In the general case where $\Phi$ is merely concave but not necessarily smooth, the gradient in \eqref{eq:FGDifferential_Local} can be replaced by a \emph{supergradient} vector $g \in \partial \varphi(p)$. While the specific choice of supergradient at points of non-differentiability may lead to non-unique portfolio weights locally, the strategy remains well-defined in an almost-everywhere sense. Since a concave function on an open set is differentiable almost everywhere, the generating function $\Phi$ determines a unique portfolio strategy $\pi(p)$ for almost all $p \in D$. This is a direct consequence of the classical result of Pal and Wong \cite[Proposition 5]{Wong2016TheGeometryofRelativeArbitrage}.
	
	Crucially, because we do not require $\Phi$ to extend to a non-negative concave function on the entire simplex in Definition \ref{def:FGP_Admissible}, the gradient term in \eqref{eq:FGDifferential_Local} can be negative enough to yield $\pi_i < 0$, thus enabling short positions.
	
	The equivalence between the MCM property and functional generation, proved in \cite{Wong2016TheGeometryofRelativeArbitrage} on the whole simplex, carries over to the present local setting.
	
	\begin{prop}[Equivalence of MCM and functional generation]
			\label{prop:MCMConcaveFunction}
			Let $K\subset\Delta^{(n)}$ be the nonempty open convex market state space in Assumption~\ref{ass:Diversity}, and let $\pi:K\to H$ be a bankruptcy-proof portfolio. Then $\pi$ satisfies MCM on $K$ if and only if there exists a positive concave function $\Phi:K\to(0,\infty)$ such that $\pi$ is generated by $\Phi$ in the sense of Definition~\ref{def:FGP_Admissible}.
	\end{prop}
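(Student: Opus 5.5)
The plan is to follow the Rockafellar-type construction of \cite[Proposition~4]{Wong2016TheGeometryofRelativeArbitrage}, checking at each step that only the local domain $K$ and the positivity $\langle\pi(p),q/p\rangle>0$ from Definition~\ref{def:Bankruptcy-ProofPortfolio} are used, and never nonnegativity of $\pi$. This matters because the bankruptcy-proof condition is exactly what makes $\log\langle\pi(p),q/p\rangle$ well defined for $p,q\in K$.

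\emph{Sufficiency.} Suppose $\pi$ is generated by a positive concave $\Phi$ on $K$. I take an arbitrary cycle $\mu(0),\dots,\mu(m+1)=\mu(0)$ in $K$ and apply \eqref{eq:MCMIneq} with $p=\mu(s)$ and $q=\mu(s+1)$. Multiplying these inequalities over $s$ gives
\[
\prod_{s=0}^m\Bigl\langle \pi(\mu(s)),\tfrac{\mu(s+1)}{\mu(s)}\Bigr\rangle\ \ge\ \prod_{s=0}^m\frac{\Phi(\mu(s+1))}{\Phi(\mu(s))}=1 .
\]
Multiplying is legitimate because every factor on both sides is strictly positive, which follows from bankruptcy-proofness and $\Phi>0$.

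\emph{Necessity.} Suppose $\pi$ satisfies MCM. I fix a base point $p_0\in K$ and define
\[
\varphi(q):=\inf\Bigl\{\sum_{s=0}^{m}\log\Bigl\langle\pi(\mu(s)),\tfrac{\mu(s+1)}{\mu(s)}\Bigr\rangle:\ m\ge 0,\ \mu(0)=p_0,\ \mu(m+1)=q,\ \mu(s)\in K\Bigr\},
\]
and set $\Phi:=e^{\varphi}$. The argument then runs in four steps.

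\begin{enumerate}
\item \emph{$\varphi(p_0)=0$ and $\varphi$ is finite.} MCM says every closed chain has a nonnegative log-sum, so $\varphi(p_0)\ge 0$, while the trivial chain gives $\varphi(p_0)\le 0$. For finiteness, take any chain from $p_0$ to $q$ and close it with a return step $q\to p_0$. MCM then gives the lower bound $\varphi(q)\ge -\log\langle\pi(q),p_0/q\rangle>-\infty$. The one-step chain gives the upper bound $\varphi(q)\le\log\langle\pi(p_0),q/p_0\rangle<\infty$.
\item \emph{Supergradient inequality.} Appending the step $p\to q$ to any chain ending at $p$ gives $\varphi(q)\le\varphi(p)+\log\langle\pi(p),q/p\rangle$. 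This is $\Phi(q)/\Phi(p)\le\langle\pi(p),q/p\rangle$, i.e.\ \eqref{eq:MCMIneq}.
\item \emph{$\Phi$ is positive and finite.} This is immediate from step 1.
\item \emph{$\Phi$ is concave.} Since $q\mapsto\langle\pi(p),q/p\rangle$ is affine, each chain contributes a function $q\mapsto A\,\langle\pi(p_m),q/p_m\rangle$ to the infimum defining $\Phi$, where $A>0$ is the exponential of the log-sum up to the last step. Each such function is affine in $q$ and positive on $K$. Hence $\Phi$ is an infimum of affine functions, so it is concave on $K$, and it is finite and positive by step 1.
\end{enumerate}

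The main obstacle is concavity, and the point needing care is that the infimum is taken of $e^{(\cdot)}$ of log-sums rather than of the log-sums themselves. The trick is to write
\[
\Phi(q)=\inf_{\text{chains to }p}\ \Phi_{\text{chain}}(p)\,\langle\pi(p),q/p\rangle,
\]
which exhibits $\Phi$ directly as an infimum of affine functions of $q$.

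Positivity must be checked separately, because an infimum of positive affine functions could vanish. Here it is guaranteed by the MCM lower bound in step 1, applied at each fixed $q$, which uses only that $p_0,q\in K$. Nothing in the argument requires Assumption~\ref{ass:Diversity} beyond $K$ being convex, so that $\Phi$ is defined on a convex set.
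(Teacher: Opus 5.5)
Your proof is correct and follows the paper's argument. Sufficiency is obtained by multiplying the supergradient inequality around a cycle. For necessity you use the same infimum-over-chains construction from a base point $p_0$, get concavity from the infimum of affine functions of the endpoint, and get the supergradient inequality by appending one step.

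The only variation is in how strict positivity is obtained. You get it directly by closing each chain back to $p_0$ and invoking MCM. The paper instead deduces it from $\Phi(p_0)=1$, which tacitly uses concavity and nonnegativity on the open set $K$.
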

	
	\begin{proof}
			The implication ``$\Leftarrow$'' follows immediately by multiplying \eqref{eq:MCMIneq} along any cycle in $K$.
			
			For ``$\Rightarrow$'', assume that $\pi$ satisfies MCM on $K$. Fix $p_0\in K$ and define
			\begin{equation}
				\Phi(p):=\inf_{\substack{t\ge0,\ \{\mu(s)\}_{s=0}^{t+1}\subset K,\ \mu(0)=p_0,\ \mu(t+1)=p}}  \prod_{s=0}^{t}\left(1+\left\langle \frac{\pi(\mu(s))}{\mu(s)},\,\mu(s+1)-\mu(s)\right\rangle\right).
				\label{eq:Phi_Construction}
			\end{equation}
			By bankruptcy-proofness, $\Phi\ge0$. As in the proof of \cite[Proposition 4]{Wong2016TheGeometryofRelativeArbitrage}, $\Phi$ is concave as the pointwise infimum of affine functions of $p$. Moreover, MCM implies $\Phi(p_0)=1$, hence $\Phi$ is strictly positive on $K$. Finally, extending any admissible path from $p$ to $q$ by one step yields
			\[
			\Phi(q)\le \Phi(p)\left(1+\left\langle \frac{\pi(p)}{p},\,q-p\right\rangle\right)
			=\Phi(p)\left\langle \pi(p),\frac{q}{p}\right\rangle,
			\]
			which is exactly \eqref{eq:MCMIneq}. Therefore, $\pi$ is generated by the positive concave function $\Phi$ on $K$.
	\end{proof}
	
	We now adapt the fundamental geometric tools of SPT, the $L$-divergence and wealth decomposition, to our restricted domain setting. These definitions parallel those established in \cite{Wong2016TheGeometryofRelativeArbitrage}, with the crucial distinction that they are valid locally on $D$ rather than requiring the global structure of the entire simplex.
	
	\begin{definition}[$L$-divergence, {\cite{Wong2016TheGeometryofRelativeArbitrage}}]
			\label{def:L_divergence}
			Let $D$ be a nonempty open convex subset of $\Delta^{(n)}$, and let $\pi$ be a portfolio generated by a positive concave function $\Phi:D\to(0,\infty)$. Write $\varphi:=\log\Phi$. The \emph{$L$-divergence} of the pair $(\Phi,\pi)$ is defined by
			\begin{equation}
				T_\varphi(q\mid p)
				:=\log\left(1+\left\langle \frac{\pi(p)}{p},\,q-p\right\rangle\right)-\bigl(\varphi(q)-\varphi(p)\bigr),
				\qquad p,q\in D.
				\label{eq:LDivergence}
			\end{equation}
	\end{definition}
	
	We use the notation $T_\varphi$ to emphasize the dependence on the logarithmic generating function $\varphi=\log\Phi$, while keeping in mind that, in general, the divergence is attached to the pair $(\Phi,\pi)$. The quantity $T_\varphi$ is a logarithmic analogue of the classical Bregman divergence \cite{Amari2010InformationGeometry}. Unlike the global formulation in \cite{Wong2016TheGeometryofRelativeArbitrage}, our definition is purely local: it is defined only for $(p,q)\in D\times D$, and no extension beyond $D$ is assumed.
	From the definition, it follows immediately that $T_\varphi(q \mid p) \geq 0$ due to the concavity of $\Phi$, with equality if and only if $\Phi$ is affine on the segment connecting $p$ and $q$.
	
	Based on this local divergence, we establish that the canonical pathwise decomposition of relative wealth remains valid for trajectories confined within the domain.
	
	\begin{lemma}[Value decomposition]
			\label{lem:ValueDecomposition}
			Let $D$ be a nonempty open convex subset of $\Delta^{(n)}$, and let $\pi$ be a portfolio generated by a positive concave function $\Phi:D\to(0,\infty)$. Write $\varphi:=\log\Phi$. Then, for any market weight sequence $\{\mu(k)\}_{k=0}^{t}\subset D$, the relative wealth process satisfies
			\begin{equation}
				\log V_\pi(t)=\log\frac{\Phi(\mu(t))}{\Phi(\mu(0))}+A(t),
			\end{equation}
			where
			\[
			A(t):=\sum_{k=0}^{t-1}T_\varphi(\mu(k+1)\mid\mu(k))
			\]
			is non-decreasing. Moreover, $A(t)\equiv0$ for every market weight sequence in $D$ if and only if $\Phi$ is affine on $D$.
	\end{lemma}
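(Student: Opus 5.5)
The decomposition is a telescoping identity, so I will start there. By the relative value recursion \eqref{eq:RelativeValue}, which remains valid since $\pi$ is bankruptcy-proof by the remark after Definition~\ref{def:FGP_Admissible}, we have
\[
\log V_\pi(t)=\sum_{k=0}^{t-1}\log\left\langle \pi(\mu(k)),\frac{\mu(k+1)}{\mu(k)}\right\rangle .
\]
Each logarithm is well defined because the inner product is strictly positive. Using the identity in \eqref{eq:MCMIneq}, each summand equals $\log\bigl(1+\langle \pi(\mu(k))/\mu(k),\,\mu(k+1)-\mu(k)\rangle\bigr)$. By Definition~\ref{def:L_divergence}, this is $T_\varphi(\mu(k+1)\mid\mu(k))+\varphi(\mu(k+1))-\varphi(\mu(k))$. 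Summing over $k$, the $\varphi$-differences telescope to $\varphi(\mu(t))-\varphi(\mu(0))=\log\bigl(\Phi(\mu(t))/\Phi(\mu(0))\bigr)$, and the decomposition follows.

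Next, $A(t)$ is non-decreasing because each increment $T_\varphi(\mu(k+1)\mid\mu(k))$ is nonnegative. To see this, take logarithms in the supergradient inequality \eqref{eq:MCMIneq} with $p=\mu(k)$ and $q=\mu(k+1)$; this gives exactly $T_\varphi\ge 0$.

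For the characterization of $A\equiv0$, the direction ``$\Leftarrow$'' goes as follows. If $\Phi$ is affine on $D$, then $\Phi(q)=\Phi(p)+\langle g,q-p\rangle$ for a fixed $g\in T$, and this affine function is its own unique supergradient (in $T$) at every point of the open set $D$. The inequality \eqref{eq:MCMIneq} forces
\[
1+\langle \pi(p)/p,\,q-p\rangle\ge \Phi(q)/\Phi(p)\quad\text{for all } q \text{ near } p, \text{ with equality at } q=p .
\]
Hence the linear functional $q\mapsto \Phi(p)\langle \pi(p)/p,\,q-p\rangle$ dominates $\langle g,q-p\rangle$ on a neighborhood of $p$ in $H$. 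A linear functional on $T$ that is $\ge$ another one on a neighborhood of $0$ must coincide with it, so equality holds in \eqref{eq:MCMIneq} for all $q\in D$. Therefore $T_\varphi\equiv0$ on $D\times D$, and $A(t)\equiv 0$ for every sequence.

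For ``$\Rightarrow$'', suppose $A\equiv0$ for every sequence. Taking two-step sequences $\mu(0)=p$, $\mu(1)=q$ gives $T_\varphi(q\mid p)=0$ for all $p,q\in D$. This means
\[
\Phi(q)=\Phi(p)\bigl(1+\langle \pi(p)/p,\,q-p\rangle\bigr)\quad\text{for all } q\in D .
\]
Fixing $p$, the right-hand side is an affine function of $q$, so $\Phi$ is affine on $D$.

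The main obstacle is the ``$\Leftarrow$'' direction: the portfolio is only defined via a supergradient inequality, so we must argue that an affine $\Phi$ pins down the supergradient uniquely. The local domination argument on the open set $D$ (working in $T$, as the Convention stipulates) handles this, and no differentiability assumption is needed beyond the affinity of $\Phi$ itself.
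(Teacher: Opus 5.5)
Your proof is correct and follows the paper's route: telescope $\log V_\pi$ using the self-financing recursion and Definition~\ref{def:L_divergence}, and get monotonicity of $A(t)$ from $T_\varphi\ge 0$. The paper only asserts that $T_\varphi\equiv 0$ on $D\times D$ if and only if $\Phi$ is affine on $D$, whereas you prove both directions, including the step that an affine $\Phi$ on the open set $D$ forces equality in the supergradient inequality \eqref{eq:MCMIneq}.
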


	\begin{proof}
			By the self-financing identity and Definition~\ref{def:L_divergence},
			\[
			\log\frac{V_\pi(k+1)}{V_\pi(k)}
			=\log\frac{\Phi(\mu(k+1))}{\Phi(\mu(k))}+T_\varphi(\mu(k+1)\mid\mu(k)).
			\]
			Summing over $k$ gives the decomposition. The monotonicity of $A(t)$ follows from $T_\varphi\ge0$, and the final assertion follows from the fact that $T_\varphi\equiv0$ on $D\times D$ if and only if $\Phi$ is affine on $D$.
	\end{proof}
	
	With these geometric tools in place, we now provide the proofs for our main result stated in the Introduction.
	
	\subsubsection*{Proof of Characterization of Admissible Pseudo-Arbitrage}

	\begin{proof}[Proof of Theorem~\ref{thm:AdmissibleArbitrage}]
			(\emph{Sufficiency}) Suppose that such a function $\Phi:D\to[0,\infty)$ exists. Since $\mu(t)\in K$ for all $t\ge0$ and $\pi|_K$ is generated by $\Phi|_K$, Lemma~\ref{lem:ValueDecomposition} yields
			\[
			\log V_\pi(t)=\log \Phi(\mu(t))-\log \Phi(\mu(0))+A(t).
			\]
			By condition (i), $\Phi|_K$ is bounded away from zero on $K$. Since $\Phi|_K$ is concave on the bounded convex set $K$, it is bounded above on $K$. Hence the potential term is uniformly bounded below, and therefore $\log V_\pi(t)$ is bounded below uniformly over all market weight sequences in $K$. This proves condition (i) in the definition of pseudo-arbitrage.
			
			To verify condition (ii), note that $\Phi|_K$ is not affine by assumption. Hence there exist $p,q\in K$ such that $\Phi|_K$ is not affine on the line segment $[p,q]\subset K$, and therefore
			\[
			T_\varphi(q\mid p)+T_\varphi(p\mid q)>0.
			\]
			Repeating the cycle $p\to q\to p\to q\to\cdots$ yields $A(t)\to\infty$. Since the potential term remains uniformly bounded, it follows that $V_\pi(t)\to\infty$ along this sequence. Thus $\pi$ is a pseudo-arbitrage on $K$.
			
			(\emph{Necessity}) Conversely, suppose that $\pi:D\to H$ is a pseudo-arbitrage on $K$. By Lemma~\ref{lem:Pseudo-arbitrageMCM}, the restriction $\pi|_K$ satisfies MCM on $K$. Hence, by Proposition~\ref{prop:MCMConcaveFunction}, there exists a positive concave function $\Phi_K:K\to(0,\infty)$ such that $\pi|_K$ is generated by $\Phi_K$.
			
			We now extend $\Phi_K$ to a function $\Phi:D\to[0,\infty)$. Fix $p_0\in K$ and define
		
			\[
			\Phi(p):=
			\begin{cases}
				\Phi_K(p), & p\in K,\\[1ex]
				\displaystyle
				\inf_{\substack{
						t\ge0,\ \{\mu(s)\}_{s=0}^{t+1}\subset D\\
						\mu(0)=p_0,\ \mu(t+1)=p
				}}
				\prod_{s=0}^{t}\left\langle \pi(\mu(s)),\frac{\mu(s+1)}{\mu(s)}\right\rangle,
				& p\in D\setminus K.
			\end{cases}
			\]
			
			Since $\pi$ is bankruptcy-proof on $D$, every factor in the above product is strictly positive, so $\Phi(p)\ge0$ for all $p\in D$. Moreover, $\Phi|_K=\Phi_K$, and therefore $\Phi|_K$ is positive and concave on $K$, and $\pi|_K$ is generated by $\Phi|_K$.
			
			It remains to verify conditions (i) and (ii). If $\Phi|_K$ were affine, then Lemma~\ref{lem:ValueDecomposition} would give $A(t)\equiv0$ for every market weight sequence in $K$, and since $\Phi|_K$ is bounded above and bounded away from zero on $K$, the relative wealth process would remain uniformly bounded above along every such sequence, contradicting pseudo-arbitrage. Thus $\Phi|_K$ is not affine on $K$.
			
			Finally, suppose that $\Phi|_K$ is not bounded away from zero on $K$. Then $0$ is a limit point of $\Phi_K(K)$. As in the proof of \cite[Theorem~1]{Wong2016TheGeometryofRelativeArbitrage}, one may choose a sequence along a line segment in $K$ approaching a boundary point $q\in\overline{K}$ with $\Phi_K(q)=0$, and use the gradient representation together with the inequality $\log(1+x)\le x$ for $x>-1$ to construct a market weight sequence in $K$ along which $V_\pi(t)\to0$. In the present setting, the required positivity of the one-step wealth factors is guaranteed by bankruptcy-proofness. This contradicts the pseudo-arbitrage property. Therefore $\Phi|_K$ is bounded away from zero on $K$.
	\end{proof}

	\section{The Geometry of the Short-Selling Mechanism}
	\label{sec:GeometryShortSelling}
	While Theorem \ref{thm:LongOnlyArbitrage} characterizes globally long-only strategies via the existence of a non-negative concave extension to the entire simplex, this global condition can be overly restrictive for portfolios defined only on a local domain $D$. In particular, a strategy may be long-only on $D$ even though no non-negative concave extension exists on $\Delta^{(n)}$.
	
	This section establishes geometric criteria for short selling in this admissible setting. We first solve the identification problem by introducing the \emph{maximal concave extension}, proving that its negativity is a necessary and sufficient condition for the presence of short positions (Theorem \ref{thm:ShortSellingEquivalence}). Building on this, we analyze generating functions satisfying a \emph{zero-boundary condition}. We demonstrate that such potentials can lead to a \emph{geometric phase transition} (Theorem \ref{thm:GeometricPhaseTransition}), in the sense that one can identify a long-only core and, under an additional geometric condition, a short-selling shell driven by the steepness of the logarithmic potential near the boundary.
	
	\subsection{The Maximal Concave Extension and the Short-Selling Condition}
	
	To discriminate between long-only and short-selling strategies, we rely on the property that a proper upper semicontinuous concave function is the pointwise infimum of its supporting hyperplanes. For a portfolio $\pi$ generated by a differentiable concave function $\Phi$ on $K$, the supporting hyperplane at $p\in K$ can be written as
	$L_p(x) \coloneqq \Phi(p)\left\langle \pi(p), x/p \right\rangle$, where $x/p$ denotes the vector of coordinatewise ratios. We define the maximal extension by intersecting these hyperplanes.
	
	\begin{definition}[Maximal Concave Extension]
		\label{def:MaximalExtension}
		Let $K$ be a market state space satisfying Assumption \ref{ass:Diversity}. Let $\Phi: K \to (0, \infty)$ be a differentiable concave function generating a portfolio $\pi:K \rightarrow H$ via equation \eqref{eq:FGDifferential_Local}. The \emph{maximal concave extension} of $\Phi$ to the simplex $\Delta^{(n)}$ is defined by
		\begin{equation}
			\hat{\Phi}(x) \coloneqq \inf_{p \in K} L_p(x)
			= \inf_{p \in K} \left\{ \Phi(p)\left\langle \pi(p), \frac{x}{p} \right\rangle \right\},
			\quad x \in \Delta^{(n)}.
			\label{eq:MaximalExtension}
		\end{equation}
	\end{definition}
	
	\begin{remark}
		The terminology ``maximal'' is justified as follows. The function $\hat{\Phi}$ is concave on $\Delta^{(n)}$, and one has $\hat{\Phi}(x)=\Phi(x)$ for all $x\in K$. Moreover, if $\Psi:\Delta^{(n)}\to\mathbb{R}$ is any concave extension of $\Phi$ (that is, $\Psi|_K=\Phi$), then $\Psi(x)\le L_p(x)$ for all $p\in K$ and all $x\in \Delta^{(n)}$, hence $\Psi(x)\le \hat{\Phi}(x)$ for all $x$. In particular, if $\hat{\Phi}$ takes a negative value, then no non-negative concave extension of $\Phi$ to $\Delta^{(n)}$ can exist.
	\end{remark}
	
	We now establish the main result of this subsection: the behavior of $\hat{\Phi}$ provides a necessary and sufficient condition for the existence of short positions within the market state space.
	
	\subsubsection*{Proof of the Equivalence of Short Selling and Negative Extension}
	
	\begin{proof}[Proof of Theorem \ref{thm:ShortSellingEquivalence}]
			First suppose that $\hat{\Phi}(x)<0$ for some $x\in\Delta^{(n)}$. Since
			\[
			\hat{\Phi}(x)=\inf_{p\in K}L_p(x),
			\]
			there exists $p\in K$ such that $L_p(x)<0$, that is,
			\[
			\Phi(p)\sum_{j=1}^n \pi_j(p)\frac{x_j}{p_j}<0.
			\]
			Because $\Phi(p)>0$ and $x_j/p_j>0$ for all $j$, at least one coefficient $\pi_j(p)$ must be negative. Hence $\pi$ involves short selling on $K$.
			
			Conversely, suppose that $\pi_i(p)<0$ for some $p\in K$ and some index $i$. Choose any sequence $\varepsilon_m\downarrow0$ and define
			\[
			x^{(m)}:=(1-\varepsilon_m)e_i+\varepsilon_m p\in\Delta^{(n)}.
			\]
			Then $x^{(m)}\to e_i$ and
			\[
			L_p(x^{(m)})=\Phi(p)\left\langle \pi(p),\frac{x^{(m)}}{p}\right\rangle
			\longrightarrow \Phi(p)\frac{\pi_i(p)}{p_i}<0.
			\]
			Therefore $L_p(x^{(m)})<0$ for all sufficiently large $m$. By definition of $\hat{\Phi}$,
			\[
			\hat{\Phi}(x^{(m)})\le L_p(x^{(m)})<0
			\]
			for all such $m$. Hence $\hat{\Phi}$ is negative at some point of $\Delta^{(n)}$.
	\end{proof}
	
	\subsection{Geometric Construction via the Zero-Boundary Condition}
	\label{subsec:ZeroBoundary}
	Theorem \ref{thm:ShortSellingEquivalence} establishes that short selling arises precisely when the generating function fails to admit a non-negative concave extension to the entire simplex. To systematically construct such strategies, we seek functions that are guaranteed to fail this extension property.
	
	We next introduce a simple geometric criterion for non-extendability. If a positive concave function vanishes at a boundary point of its domain that still lies in the interior of the simplex, then it cannot admit a non-negative concave extension to $\Delta^{(n)}$.
	\begin{prop}[Zero-Boundary Implies Non-Extendability]
			\label{prop:ZeroBoundary}
			Let $D\subset\Delta^{(n)}$ be a nonempty open convex set, and let $\Phi:\overline{D}\to[0,\infty)$ be a concave function that is strictly positive on $D$. If there exists $q\in\partial D\cap\Delta^{(n)}$ such that $\Phi(q)=0$, then $\Phi$ admits no non-negative concave extension to $\Delta^{(n)}$.
	\end{prop}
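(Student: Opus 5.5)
We argue by contradiction, using a one-dimensional restriction along a segment that passes through $q$ and extends beyond it into the simplex. Suppose $\Psi:\Delta^{(n)}\to[0,\infty)$ is a concave function with $\Psi|_{\overline{D}\cap\Delta^{(n)}}=\Phi$; in particular $\Psi=\Phi$ on $D$ and $\Psi(q)=0$. Here we read ``extension'' as agreeing with $\Phi$ on the part of $\overline{D}$ lying in $\Delta^{(n)}$, which is the natural reading since $\Phi$ is given on $\overline{D}$. If one only insists on agreement on $D$, we note below how the argument adapts.

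\textbf{Construction of the segment.} Fix any $p_0\in D$. Because $q\in\Delta^{(n)}$ and $\Delta^{(n)}$ is open relative to $H$, there is $\delta>0$ such that $r:=q+\delta(q-p_0)\in\Delta^{(n)}$. Then $q$ lies in the relative interior of the segment $[p_0,r]$:
\[
q=\lambda p_0+(1-\lambda)r,\qquad \lambda=\frac{\delta}{1+\delta}\in(0,1).
\]

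\textbf{Concavity step.} Concavity of $\Psi$ on $\Delta^{(n)}$ gives
\[
0=\Psi(q)\ge \lambda\Psi(p_0)+(1-\lambda)\Psi(r)=\lambda\Phi(p_0)+(1-\lambda)\Psi(r).
\]
Since $\Phi(p_0)>0$ and $\lambda>0$, this forces $\Psi(r)\le -\lambda\Phi(p_0)/(1-\lambda)<0$. That contradicts the non-negativity of $\Psi$.

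\textbf{Variant with agreement only on $D$.} In this case we only know $\Psi=\Phi$ on $D$, so the value $\Psi(q)$ must be controlled by a limit. Take points $q_t:=(1-t)p_0+tq\in D$ for $t\in[0,1)$; they lie in $D$ because $D$ is convex and open and $q\in\overline{D}$. Concavity of $\Phi$ on the closed segment gives $\Phi(q_t)\ge (1-t)\Phi(p_0)$. Because $q\in\Delta^{(n)}$, the segment $[p_0,r]$ lies in the open set $\Delta^{(n)}$, and the restriction of $\Psi$ to it is a finite concave function of one variable. Such a function is continuous at the interior point $q$. It follows that $\Psi(q)=\lim_{t\to1}\Phi(q_t)$.

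\textbf{Main obstacle.} The difficulty in the variant is identifying this limit with $\Phi(q)=0$, which requires continuity of $\Phi$ at $q$ along the segment. Concavity alone gives only the lower bound $\liminf_{t\to1}\Phi(q_t)\ge\Phi(q)$, not equality. I would handle this by working with the closed-domain formulation, where agreement at $q$ is part of the hypothesis, so that $\Psi(q)=\Phi(q)=0$ directly. Failing that, I would add a continuity assumption on $\Phi$ on $\overline{D}$, which is exactly what holds in the later applications such as Theorem~\ref{thm:GeometricPhaseTransition}. Either way, once $\Psi(q)=0$ is established, the contradiction in the concavity step goes through unchanged.
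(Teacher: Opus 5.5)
Your proof is correct and is essentially the paper's argument: both extend the segment from an interior point $p_0\in D$ through $q$ to a point $r\in\Delta^{(n)}$, then use concavity at $q$, together with $\Psi(q)=\Phi(q)=0$ and $\Psi(p_0)=\Phi(p_0)>0$, to force $\Psi(r)<0$. Your observation that this requires the extension to agree with $\Phi$ at $q$, not merely on $D$, is apt. The paper's proof relies on $\Psi(q)=\Phi(q)=0$ in exactly that sense.
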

	
	\begin{proof}
			Choose any $p\in D$. Since $q\in\partial D\cap\Delta^{(n)}$ and $\Delta^{(n)}$ is relatively open in $H$, there exist $r\in\Delta^{(n)}\setminus\overline{D}$ and $\lambda\in(0,1)$ such that $q=(1-\lambda)p+\lambda r$.
			Suppose that $\Phi$ admits a non-negative concave extension $\Psi$ to $\Delta^{(n)}$. By concavity,
			\[
			\Psi(q)\ge (1-\lambda)\Psi(p)+\lambda\Psi(r).
			\]
			Using $\Psi(q)=\Phi(q)=0$ and $\Psi(p)=\Phi(p)>0$, we obtain
			\[
			\Psi(r)\le -\frac{1-\lambda}{\lambda}\Psi(p)<0,
			\]
			which contradicts the non-negativity of $\Psi$ on $\Delta^{(n)}$.
	\end{proof}
	
	Proposition~\ref{prop:ZeroBoundary} provides a concrete geometric criterion for failure of non-negative concave extendability. In the subsequent analysis, we use this observation to construct explicit examples and identify parameter regimes under which short selling occurs.
	
	\subsection{Analytical Mechanism: Blow-Up Near a Zero Boundary}
	\label{subsec:AnalyticalMechanism}
	The zero-boundary condition ensures that short positions must occur somewhere on $D$, but it is also useful to understand how they arise as the market approaches the boundary. The key point is that the logarithmic potential $\log\Phi$ can become arbitrarily steep near a boundary point where $\Phi$ vanishes. We formalize this mechanism using tools from convex analysis.
	
	\begin{prop}[Blow-Up Near a Zero Boundary]
			\label{prop:LogGradientDivergence}
			Let $D\subset\Delta^{(n)}$ be a nonempty open convex set, and let $\Phi:\overline{D}\to[0,\infty)$ be a continuous concave function. Suppose that $\Phi(p_0)>0$ for some $p_0\in D$, and that $\Phi(q)=0$ for some $q\in\partial D\cap\Delta^{(n)}$ lying strictly on the line segment joining $p_0$ and a vertex $e_i$. Then there exists a selection $g(p)\in\partial\Phi(p)$ for all $p$ sufficiently close to $q$ along $[p_0,q)$ such that
			\begin{equation}
				\lim_{p\to q,\ p\in[p_0,q)}\frac{\langle g(p),e_i-p\rangle}{\Phi(p)}=-\infty.
			\end{equation}
	\end{prop}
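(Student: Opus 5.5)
The plan is to reduce everything to the one-dimensional concave function obtained by restricting $\Phi$ to the ray from $q$ toward $e_i$. Write $v:=e_i-q$ and note that for $p\in[p_0,q)$ one has $e_i-p=(e_i-q)+(q-p)$. Since $q$ lies strictly between $p_0$ and $e_i$, the points $p_0,q,e_i$ are collinear. Hence $q-p$ and $e_i-p$ are both positive multiples of the direction $u:=(e_i-p_0)/|e_i-p_0|$. Concretely, $e_i-p=\beta(p)\,u$ with $\beta(p)=|e_i-p|\ge |e_i-q|>0$, so $\beta$ stays bounded below along the segment. Parametrize $p(s)=q-s u$ for $s\in(0,s_0]$, where $p(s_0)=p_0$, and set $f(s):=\Phi(p(s))$. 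Then $f$ is concave and continuous on $[0,s_0]$, with $f(0)=0$ and $f(s_0)>0$.

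First I will show that $f(s)>0$ on $(0,s_0]$ and that $f$ is nondecreasing in $s$ near $0$, i.e.\ $\Phi$ decreases toward $q$. Positivity follows from concavity: $f(s)\ge (s/s_0)f(s_0)>0$. Next I choose the selection. Because $p(s)\in D$ is interior (relative to $H$), the superdifferential $\partial\Phi(p(s))$ is nonempty by \cite[Theorem 23.4]{Rockafellar1970ConvexAnalysis}. Any $g\in\partial\Phi(p)$ satisfies the supergradient inequality toward $q$:
\[
0=\Phi(q)\le \Phi(p)+\langle g,q-p\rangle .
\]
Here $\Phi(q)=0$ holds by continuity on $\overline D$; the supergradient inequality extends to $\overline{D}$ by continuity, or alternatively I apply it at $q_\epsilon\in[p,q)$ and let $\epsilon\to0$. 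Writing $q-p=s u$, this gives $\langle g,u\rangle\ge -\Phi(p)/s$, which is a bound in the wrong direction. So the supergradient inequality at $p$ alone is not enough, and I will instead use the one-sided derivative.

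The key step is the following. Take $g(p)\in\partial\Phi(p)$ achieving the directional derivative in direction $u$, i.e.\ $\langle g(p),u\rangle=\Phi'(p;u)=-f'_+(s)$. Such a $g$ exists because $\Phi'(p;u)=\min_{g\in\partial\Phi(p)}\langle g,u\rangle$ for concave $\Phi$ (Rockafellar, Theorem 23.4). Equivalently I can take any selection and bound it using the left derivative. Concavity of $f$ on $[0,s]$ gives $f'_+(s)\le f'_-(s)\le \bigl(f(s)-f(0)\bigr)/s=f(s)/s$. That is an upper bound, whereas I need $f'$ large relative to $f$. So I reverse the comparison: concavity gives the chord inequality $f(s)\ge f(0)+ s\,f'_-(s)$, which is again an upper bound on the slope. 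I therefore expect the real mechanism to be the \emph{ratio}, using $\langle g,e_i-p\rangle/\Phi(p)=-\beta(p)f'_+(s)/f(s)$ (with sign conventions checked). Define $h(s):=\log f(s)$. Then $h\to-\infty$ as $s\downarrow0$ while $h(s_0)$ is finite. Hence $\int_0^{s_0}h'(s)\,ds=+\infty$, so $h'(s)=f'(s)/f(s)$ cannot stay bounded as $s\downarrow0$.

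The main obstacle is upgrading this "unbounded" conclusion to a genuine limit $+\infty$. I will handle it with monotonicity. Since $f$ is concave, $f'_+$ is nonincreasing in $s$, so as $s\downarrow0$ the derivative $f'_+(s)$ increases. Meanwhile $f(s)\downarrow0$, because $f$ is nondecreasing near $0$: the slope near $0$ is at least the chord slope $f(s_0)/s_0>0$. Hence $f'_+(s)\ge f(s_0)/s_0>0$ for all $s<s_0$, while $f(s)\to0$. Therefore $f'_+(s)/f(s)\ge (f(s_0)/s_0)/f(s)\to+\infty$. Multiplying by $\beta(p)\ge|e_i-q|>0$ and using $\langle g(p),e_i-p\rangle=-\beta(p)f'_+(s)$ yields
\[
\frac{\langle g(p),e_i-p\rangle}{\Phi(p)}\le -|e_i-q|\,\frac{f(s_0)}{s_0\,f(s)}\longrightarrow-\infty .
\]
In fact this is simpler than the integral argument, and it also shows that the selection attaining the directional derivative works. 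Any selection works as well, since every $g\in\partial\Phi(p)$ has $\langle g,u\rangle\le\Phi'(p;u)$; I will double-check this sign of the inequality carefully.
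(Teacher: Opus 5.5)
Your argument is correct in substance and takes a more elementary route than the paper's.

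\textbf{Comparison with the paper.} The paper gets a point $\xi_\varepsilon\in(p_0,q)$ carrying a supergradient with slope $-C<0$ in the direction of $q$ from a nonsmooth mean value theorem (Wegge). It then propagates $\Phi'(p;u)\le -C$ toward $q$ by monotonicity of $f'_+$, and chooses $g(p)$ attaining $\min_{g\in\partial\Phi(p)}\langle g,u\rangle$ (Rockafellar, Theorem 23.4). You replace the mean value theorem by the one-dimensional chord-slope inequality for $f(s)=\Phi(q-su)$, which directly gives a positive lower bound on the slope near $s=0$. The remaining steps are the same as the paper's: collinearity, $\|e_i-p\|\ge\|e_i-q\|$, and dividing by $\Phi(p)\to0$.

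Your idea can be pushed one step further. The supergradient inequality tested toward $q$ does point the wrong way, as you found. Tested toward $p_0$, however, it gives for \emph{every} $g\in\partial\Phi(p)$
\[
\langle g,u\rangle\le\frac{\Phi(p)-\Phi(p_0)}{\|p-p_0\|}\longrightarrow-\frac{\Phi(p_0)}{\|q-p_0\|}<0 .
\]
So any selection works. This is stronger than the stated existence claim and needs neither the mean value theorem nor Theorem 23.4. It is also the version one actually wants for nondifferentiable $\Phi$, where the portfolio may be built from an arbitrary supergradient.

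\textbf{Statements that need fixing.}
\begin{enumerate}
\item Since $p+hu=p(s-h)$, moving in direction $u$ decreases $s$. Hence $\Phi'(p;u)=-f'_-(s)$, not $-f'_+(s)$. Because $f'_-(s)\ge f'_+(s)$, you still get $\langle g(p),u\rangle\le -f'_+(s)$, which is all you use.
\item The claim ``$f'_+(s)\ge f(s_0)/s_0$ for all $s<s_0$'' is false in general; take $f(s)=\min\{s,1\}$ with $s_0=2$. What is true, and sufficient, is $f'_+(s)\ge (f(s_0)-f(s))/(s_0-s)$. This is at least $f(s_0)/(2s_0)$ once $s$ is small enough that $f(s)\le f(s_0)/2$.
\item The inequality in your closing remark is reversed. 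For concave $\Phi$, $\Phi'(p;u)$ is the \emph{minimum} of $\langle g,u\rangle$ over $\partial\Phi(p)$, so $\langle g,u\rangle\ge\Phi'(p;u)$. The correct reason any selection works is
\[
\langle g,u\rangle\le\max_{g'\in\partial\Phi(p)}\langle g',u\rangle=-\Phi'(p;-u)=-f'_+(s).
\]
\end{enumerate}
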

	
	\begin{proof}
		By continuity at $q$ and the fact that $\Phi(q)=0<\Phi(p_0)$, for $\varepsilon>0$ small enough the point
		\[
		q_\varepsilon:=p_0+(1-\varepsilon)(q-p_0)\in(p_0,q)
		\]
		satisfies $\Phi(q_\varepsilon)<\Phi(p_0)$. Let $u:=\frac{q-p_0}{\|q-p_0\|}$. Applying the mean value theorem for convex functions to $-\Phi$ along the segment from $p_0$ to $q_\varepsilon$ \cite{Wegge1974MVT}, there exist $\lambda_\varepsilon\in(0,1)$ and $g_\varepsilon\in\partial\Phi(\xi_\varepsilon)$, where
		$\xi_\varepsilon:=p_0+\lambda_\varepsilon(q_\varepsilon-p_0)$,
		such that
		\[
		\langle g_\varepsilon,u\rangle
		=\frac{\Phi(q_\varepsilon)-\Phi(p_0)}{\|q_\varepsilon-p_0\|}
		\eqqcolon -C<0.
		\]
		
		For $p\in D$ and $\nu\in T$, let
		\[
		\Phi'(p;\nu):=\lim_{h\downarrow0}\frac{\Phi(p+h\nu)-\Phi(p)}{h}
		\]
		denote the one-sided directional derivative. Since $\Phi$ is concave on the open convex set $D$,
		\[
		\Phi'(p;u)=\min_{g\in\partial\Phi(p)}\langle g,u\rangle,
		\qquad p\in D,
		\]
		with the minimum attained \cite[Theorem 23.4]{Rockafellar1970ConvexAnalysis}. In particular,
		\[
		\Phi'(\xi_\varepsilon;u)\le \langle g_\varepsilon,u\rangle=-C.
		\]
		
		Now define
		\[
		f(\lambda):=\Phi\bigl(p_0+\lambda(q-p_0)\bigr),\quad \lambda\in[0,1].
		\]
		Then $f$ is concave, so its right derivative $f'_+$ is nonincreasing \cite[Theorem 24.1]{Rockafellar1970ConvexAnalysis}. Since $\xi_\varepsilon=p_0+\lambda_\varepsilon(1-\varepsilon)(q-p_0)$, for $\lambda\in(\lambda_\varepsilon(1-\varepsilon),1)$ and $p=p_0+\lambda(q-p_0)$ we have
		\[
		f'_+(\lambda)=\Phi'(p;q-p_0)=\|q-p_0\|\,\Phi'(p;u),
		\]
		and hence
		\[
		\Phi'(p;u)\le \Phi'(\xi_\varepsilon;u)\le -C,
		\qquad p\in(\xi_\varepsilon,q).
		\]
		For each such $p$, choose $g(p)\in\partial\Phi(p)$ so that
		\[
		\langle g(p),u\rangle=\Phi'(p;u)\le -C.
		\]
		Extend $g$ arbitrarily on $[p_0,\xi_\varepsilon]$.
		
		Since $q$ lies strictly on the segment between $p_0$ and $e_i$, there exists $k>0$ such that
		$e_i-q=k\|q-p_0\|\,u$. Also, for $p\in(\xi_\varepsilon,q)$, $q-p=\|q-p\|\,u$.
		Therefore,
		\[
		\langle g(p),e_i-p\rangle
		=\langle g(p),e_i-q\rangle+\langle g(p),q-p\rangle
		\le -kC\|q-p_0\|-C\|q-p\|
		\le -M,
		\]
		where $M:=kC\|q-p_0\|>0$. Since $\Phi(p)\to0$ as $p\to q$ along $[p_0,q)$,
		\[
		\frac{\langle g(p),e_i-p\rangle}{\Phi(p)}
		\le \frac{-M}{\Phi(p)}
		\longrightarrow -\infty.
		\]
	\end{proof}
	
	\begin{coro}[Emergence of Short Positions along a Ray to the Boundary]
			\label{coro:ShortSellingNearBoundary}
			Let $D\subset\Delta^{(n)}$ be open and convex, and let $\Phi:\overline{D}\to[0,\infty)$ be continuous on $\overline{D}$ and differentiable, concave, and strictly positive on $D$. Suppose that $\Phi(q)=0$ for some $q\in\partial D\cap\Delta^{(n)}$ lying strictly on the line segment joining a point $p_0\in D$ and a vertex $e_i$. Let $\pi$ be the portfolio generated by $\Phi$ via \eqref{eq:FGDifferential_Local}. Then $\pi_i(p)<0$ for all $p$ sufficiently close to $q$ along $[p_0,q)$.
	\end{coro}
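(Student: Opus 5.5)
The plan is to combine the gradient representation of Proposition~\ref{prop:FGExpression} with the blow-up estimate of Proposition~\ref{prop:LogGradientDivergence}. By \eqref{eq:FGDifferential_Local}, for every $p\in D$ we have
\[
\pi_i(p)=p_i\left(1+\frac{\langle\nabla\Phi(p),e_i-p\rangle}{\Phi(p)}\right),
\]
using $\nabla\varphi=\nabla\Phi/\Phi$ on $D$, where $\Phi>0$. Since $p\in\Delta^{(n)}$, the factor $p_i$ is strictly positive. Hence the sign of $\pi_i(p)$ is the sign of $1+\langle\nabla\Phi(p),e_i-p\rangle/\Phi(p)$, and it suffices to show that this ratio tends to $-\infty$ as $p\to q$ along $[p_0,q)$.

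Because $\Phi$ is differentiable on $D$, the superdifferential $\partial\Phi(p)$ is the singleton $\{\nabla\Phi(p)\}$ for every $p\in D$. Any selection $g(p)\in\partial\Phi(p)$ must therefore equal $\nabla\Phi(p)$, including the one produced in Proposition~\ref{prop:LogGradientDivergence}. The hypotheses of that proposition are satisfied here. $\Phi$ is continuous and concave on $\overline D$; to apply concavity on the closure, we use the continuous extension of a function concave on $D$, and the proof of the proposition in fact only uses concavity along the segment together with continuity at $q$. Moreover, $\Phi(p_0)>0$ because $p_0\in D$, and $q\in\partial D\cap\Delta^{(n)}$ lies strictly between $p_0$ and $e_i$ with $\Phi(q)=0$. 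Proposition~\ref{prop:LogGradientDivergence} therefore gives
\[
\lim_{p\to q,\ p\in[p_0,q)}\frac{\langle \nabla\Phi(p),e_i-p\rangle}{\Phi(p)}=-\infty .
\]

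The conclusion then follows. There is a neighbourhood of $q$ along the segment, namely $[p^\ast,q)$ for some $p^\ast\in[p_0,q)$, on which the ratio is below $-1$, so $\pi_i(p)<0$ there. One point needs care: Proposition~\ref{prop:LogGradientDivergence} only guarantees its bound for $p$ sufficiently close to $q$, on $(\xi_\varepsilon,q)$. Since we only claim the sign for $p$ near $q$, this is harmless. We also need $[p_0,q)\subset D$. This follows because $D$ is convex and open, $p_0\in D$ and $q\in\overline D$, so every point of the half-open segment lies in $D$ by the standard line-segment principle.

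The main obstacle is not analytic but consists of checking that the hypotheses line up. The singleton superdifferential identification uses differentiability relative to $H$, together with the convention that gradients are represented in $T$. The segment lying in $D$ is what makes \eqref{eq:FGDifferential_Local} applicable at each $p$. Once these are in place, the corollary is a direct consequence of the cited proposition.
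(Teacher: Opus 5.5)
Your proposal is correct and follows essentially the same route as the paper. You invoke Proposition~\ref{prop:LogGradientDivergence}, with the selection forced to equal $\nabla\Phi(p)$ by differentiability, deduce that $1+\langle\nabla\Phi(p),e_i-p\rangle/\Phi(p)<0$ near $q$, and conclude via $p_i>0$ in \eqref{eq:FGDifferential_Local}. Your extra checks are details the paper leaves implicit: concavity passing to $\overline D$ by continuity, the singleton superdifferential relative to $H$, and $[p_0,q)\subset D$ by the line-segment principle.
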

	
	\begin{proof}
			By Proposition~\ref{prop:LogGradientDivergence} and differentiability of $\Phi$ on $D$,
			\[
			\frac{\langle \nabla\Phi(p),e_i-p\rangle}{\Phi(p)}
			\longrightarrow -\infty
			\quad\text{as }p\to q,\ p\in[p_0,q).
			\]
			Hence
			\[
			1+\frac{\langle \nabla\Phi(p),e_i-p\rangle}{\Phi(p)}<0
			\]
			for all $p$ sufficiently close to $q$ along $[p_0,q)$. Using \eqref{eq:FGDifferential_Local},
			\[
			\pi_i(p)
			=
			p_i\left(1+\langle \nabla\varphi(p),e_i-p\rangle\right)
			=
			p_i\left(1+\frac{\langle \nabla\Phi(p),e_i-p\rangle}{\Phi(p)}\right),
			\]
			where $\varphi=\log\Phi$. Since $p_i>0$ for all $p\in\Delta^{(n)}$, it follows that $\pi_i(p)<0$ for all such $p$.
	\end{proof}

	\begin{remark}
			Corollary~\ref{coro:ShortSellingNearBoundary} shows that short positions arise on a nonempty open region of the domain. Indeed, there exists a point $p'\in[p_0,q)$ sufficiently close to $q$ such that $\pi_i(p')<0$. Since $\pi$ is continuous on $D$, there exists an open neighborhood $U\subset D$ of $p'$ such that
			\[
			\pi_i(p)<0,\quad p\in U.
			\]
			Therefore, for any nonempty open convex set $D'\subset U$, the restricted portfolio $\pi|_{D'}$ is generated by $\Phi|_{D'}$ and satisfies
			\[
			\pi_i(p)<0,\quad p\in D'.
			\]
			In this way, the vanishing condition $\Phi(q)=0$, together with concavity of $\Phi$, provides a natural mechanism for producing short-selling regions. For the concrete examples studied later, a more precise characterization of the corresponding short-selling region is given in Proposition~\ref{prop:ShortCondition}.
	\end{remark}
	While the divergence near the boundary forces short positions at the periphery, it is equally important to characterize the behavior at the center of the domain. Completing the geometric picture, the following proposition ensures that these strategies remain strictly long-only in a neighborhood of an interior maximizer.
	
	\begin{prop}[Existence of a Strictly Long-Only Region]
		\label{prop:LongOnlyRegion}
		Let $D\subset\Delta^{(n)}$ be open and convex, and let $\Phi:D\to(0,\infty)$ be a $C^1$ concave function. Let $\pi$ be the portfolio generated by $\Phi$ via \eqref{eq:FGDifferential_Local}. Suppose that $\Phi$ attains its global maximum on $D$ at some point $p^*\in D$. Then $\pi(p^*)=p^*$. Moreover, there exists an open neighborhood $U$ of $p^*$ such that
		\[
		\pi_i(p)>0,\qquad i=1,\ldots,n,\quad p\in U.
		\]
	\end{prop}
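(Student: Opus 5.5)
The plan is a short first-order argument at the maximizer followed by a continuity argument.

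\emph{Step 1: the gradient vanishes at $p^*$.} Since $D$ is open relative to $H$ and $\Phi$ is $C^1$ on $D$, the interior maximizer $p^*$ is a critical point of $\Phi$ along every direction in $T$. Thus $\nabla\Phi(p^*)=0$ as a vector in $T$, with the normalization fixed in the Convention. It follows that $\nabla\varphi(p^*)=\nabla\Phi(p^*)/\Phi(p^*)=0$. Here $\Phi(p^*)>0$ is used to make $\varphi=\log\Phi$ well defined and differentiable near $p^*$. Only $C^1$ regularity is needed for this step; concavity is not. With concavity one could add that $\nabla\Phi(p^*)$ is the supergradient $0$, but this is not required.

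\emph{Step 2: $\pi(p^*)=p^*$.} Substituting $\nabla\varphi(p^*)=0$ into the gradient representation \eqref{eq:FGDifferential_Local} of Proposition~\ref{prop:FGExpression} gives
\[
\pi_i(p^*)=p^*_i\bigl(1+\langle 0,e_i-p^*\rangle\bigr)=p^*_i
\]
for every $i$. Since $p^*\in\Delta^{(n)}$, each $\pi_i(p^*)=p^*_i>0$.

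\emph{Step 3: continuity.} Because $\Phi$ is $C^1$ and strictly positive, $p\mapsto\nabla\varphi(p)=\nabla\Phi(p)/\Phi(p)$ is continuous on $D$. Hence each map $p\mapsto\pi_i(p)=p_i\bigl(1+\langle\nabla\varphi(p),e_i-p\rangle\bigr)$ is continuous on $D$. Each of the $n$ sets $\{p\in D:\pi_i(p)>0\}$ is therefore relatively open in $H$ and contains $p^*$. Let $U$ be their finite intersection; it is an open neighborhood of $p^*$ on which every $\pi_i$ is strictly positive. If a convex neighborhood is preferred, one can shrink $U$ to a small ball around $p^*$ intersected with $H$.

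\emph{Main obstacle.} There is no real difficulty. The one point needing care is Step 1: the gradient must be taken relative to $H$ and identified with its representative in $T$. Otherwise a maximizer of $\Phi$ restricted to the simplex would only give $\nabla\Phi(p^*)$ parallel to $(1,\dots,1)$ in $\mathbb{R}^n$. That component is harmless anyway, since $\langle(1,\dots,1),e_i-p^*\rangle=0$, so the formula for $\pi$ does not depend on it.
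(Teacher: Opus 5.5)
Your proposal is correct and follows the paper's proof essentially step for step. The paper also argues that $\nabla\varphi(p^*)=0$ in $T$ at the interior maximizer, substitutes this into \eqref{eq:FGDifferential_Local} to get $\pi(p^*)=p^*$, and then uses continuity of $\pi$ (from $\Phi\in C^1$ and $\Phi>0$) to find the neighborhood $U$.
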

	\begin{proof}
		Since $p^*$ is an interior maximizer of $\Phi$ on $D$, the directional derivative of $\Phi$ at $p^*$ vanishes in every tangent direction $v\in T$. Equivalently,
		\[
		\langle \nabla\Phi(p^*),v\rangle=0,\qquad v\in T.
		\]
		By our convention, $\nabla\Phi(p^*)\in T$, and therefore $\nabla\Phi(p^*)=0$. Hence $\nabla\varphi(p^*)=0$, where $\varphi=\log\Phi$. Since $e_i-p^*\in T$ for each $i$,
		\[
		\langle \nabla\varphi(p^*),e_i-p^*\rangle=0,\qquad i=1,\ldots,n.
		\]
		Substituting into \eqref{eq:FGDifferential_Local} gives
		\[
		\pi_i(p^*)=p_i^*,\qquad i=1,\ldots,n,
		\]
		and thus $\pi(p^*)=p^*$.
		
		Because $p^*\in\Delta^{(n)}$, we have $p_i^*>0$ for all $i$. Moreover, $\pi$ is continuous on $D$ since $\Phi\in C^1(D)$. Therefore, after possibly shrinking to an open neighborhood $U$ of $p^*$, we obtain
		\[
		\pi_i(p)>0,\qquad i=1,\ldots,n,\quad p\in U.
		\]
	\end{proof}
	\subsubsection*{Proof of Geometric Phase Transition of Shrunken Portfolios}
	\begin{proof}[Proof of Theorem \ref{thm:GeometricPhaseTransition}]
		Assertion (i) follows from Proposition~\ref{prop:LongOnlyRegion}. Indeed, by symmetry and concavity, the barycenter $\bar e$ is a maximizer of $\Phi$ on $D$, and Proposition~\ref{prop:LongOnlyRegion} yields an open neighborhood $U$ of $\bar e$ on which $\pi$ is strictly long-only.
		
		Assertion (ii) is exactly Corollary~\ref{coro:ShortSellingNearBoundary}.
	\end{proof}

	\section{Constructive Strategies via Barycentric Scaling}
	\label{sec:ConstructionViaScaling}
	While the preceding section established the geometric necessity of short selling via the zero-boundary condition (Proposition \ref{prop:ZeroBoundary} and Theorem \ref{thm:GeometricPhaseTransition}), these results are primarily qualitative. To bridge the gap between abstract theory and practical implementation, we require a systematic methodology to engineer generating functions with prescribed boundary behavior.
	
	In this section, we introduce the \emph{barycentric scaling transformation}. This geometric technique allows us to continuously contract the domain of classical generating functions defined on the closed simplex $\overline{\Delta^{(n)}}$ into a strictly interior subdomain. By doing so, we create a user-defined zero boundary within the market space, thereby enforcing admissible short positions in a controlled and transparent manner.
	
	\subsection{The Barycentric Scaling Methodology}
	\label{subsec:ScalingMethodology}
	\begin{definition}[Barycentric Scaling Transformation]
		\label{def:ScalingTransformation}
		Let $\bar{e} = \left( \frac{1}{n}, \frac{1}{n}, \ldots, \frac{1}{n} \right)$ denote the barycenter of the simplex $\Delta^{(n)}$. For any scaling factor $0 < \lambda < 1$, the barycentric scaling map $S_\lambda$ is defined by
		\begin{align}
			S_\lambda: \overline{\Delta^{(n)}} &\to \overline{\Delta^{(n)}}, \nonumber \\
			x &\mapsto \lambda x + (1 - \lambda) \bar{e}. \label{eq:ShrinkTransformation}
		\end{align}
		This transformation uniformly contracts the simplex toward the barycenter $\bar{e}$ by the factor $\lambda$.
		
		The inverse map, termed the barycentric rescaling map, is given by
		\begin{align}
			S_\lambda^{-1}: \operatorname{Im}(S_\lambda) &\to \overline{\Delta^{(n)}}, \nonumber \\
			x &\mapsto \frac{1}{\lambda} x + \left(1 - \frac{1}{\lambda}\right) \bar{e}. \label{eq:ExpandTransformation}
		\end{align}
		Since $0 < \lambda < 1$, we have $1/\lambda>1$, and therefore $S_\lambda^{-1}$ expands away from the barycenter.
	\end{definition}
	
	\begin{figure}[ht]
		\centering
		\subfloat[Unit closed simplex $\overline{\Delta^{(3)}}$]{
			\tdplotsetmaincoords{70}{120}
			\begin{tikzpicture}[scale=2.5, tdplot_main_coords, line join=round, line cap=round, >=stealth]
				\coordinate (O) at (0,0,0);
				\coordinate (A) at (1,0,0);
				\coordinate (B) at (0,1,0);
				\coordinate (C) at (0,0,1);
				
				\coordinate (Center) at (0.3333,0.3333,0.3333);
				
				\draw[-, dashed] (0,0,0) -- (1,0,0);
				\draw[->] (1,0,0) -- (1.2,0,0) node[anchor=north east]{$x$};
				\draw[-, dashed] (0,0,0) -- (0,1,0);
				\draw[->] (0,1,0) -- (0,1.2,0) node[anchor=north west]{$y$};
				\draw[-, dashed] (0,0,0) -- (0,0,1);
				\draw[->] (0,0,1) -- (0,0,1.2) node[anchor=south]{$z$};
				
				\draw[-] (A) -- (B);
				\draw[-] (A) -- (C);
				\draw[-] (B) -- (C);
				
				\fill[red!30, opacity=0.6] (A) -- (B) -- (C) -- cycle;
				
				\node[anchor=west, font=\scriptsize, xshift=-25pt] at (A) {$e(1)$};
				\node[anchor=east, font=\scriptsize, xshift=20pt, yshift=5pt] at (B) {$e(2)$};
				\node[anchor=south, font=\scriptsize, xshift=-12pt] at (C) {$e(3)$};
				
				\fill[black] (Center) circle (0.4pt);
				\node[anchor=north, font=\scriptsize] at (Center) {$\bar{e}$};
				
				\node[font=\scriptsize, red] at (0.3,0.4,0.5) {$\overline{\Delta^{(3)}}$};
			\end{tikzpicture}
		}
		\hspace{1cm}
		\subfloat[Shrunken closed simplex $S_{\lambda}(\overline{\Delta^{(3)}})$]{
			\tdplotsetmaincoords{70}{120}
			\begin{tikzpicture}[scale=2.5, tdplot_main_coords, line join=round, line cap=round, >=stealth]
				\coordinate (O) at (0,0,0);
				\coordinate (A) at (1,0,0);
				\coordinate (B) at (0,1,0);
				\coordinate (C) at (0,0,1);
				
				\coordinate (Center) at (0.3333,0.3333,0.3333);
				
				\coordinate (As) at ($(Center) + 0.6667*($(A)-(Center)$)$);
				\coordinate (Bs) at ($(Center) + 0.6667*($(B)-(Center)$)$);
				\coordinate (Cs) at ($(Center) + 0.6667*($(C)-(Center)$)$);
				
				\draw[-, dashed] (0,0,0) -- (1,0,0);
				\draw[->] (1,0,0) -- (1.2,0,0) node[anchor=north east]{$x$};
				\draw[-, dashed] (0,0,0) -- (0,1,0);
				\draw[->] (0,1,0) -- (0,1.2,0) node[anchor=north west]{$y$};
				\draw[-, dashed] (0,0,0) -- (0,0,1);
				\draw[->] (0,0,1) -- (0,0,1.2) node[anchor=south]{$z$};
				
				\draw[gray, thin, dashed] (A) -- (B);
				\draw[gray, thin, dashed] (A) -- (C);
				\draw[gray, thin, dashed] (B) -- (C);
				
				\draw[-] (As) -- (Bs);
				\draw[-] (As) -- (Cs);
				\draw[-] (Bs) -- (Cs);
				
				\fill[blue!30, opacity=0.6] (As) -- (Bs) -- (Cs) -- cycle;
				
				\node[anchor=west, font=\scriptsize, xshift=-40pt] at (As) {$S_\lambda(e(1))$};
				\node[anchor=east, font=\scriptsize, xshift=40pt, yshift=5pt] at (Bs) {$S_\lambda(e(2))$};
				\node[anchor=south, font=\scriptsize, xshift=-25pt] at (Cs) {$S_\lambda(e(3))$};
				
				\fill[black] (Center) circle (0.4pt);
				\node[anchor=north, font=\scriptsize] at (Center) {$\bar{e}$};
				
				\node[font=\scriptsize, blue] at (0.3,0.3,0.4) {$S_\lambda(\overline{\Delta^{(3)}})$};
			\end{tikzpicture}
		}
		\caption{Visualization of the barycentric scaling transformation in $\mathbb{R}^3$. The left panel shows the standard simplex, while the right panel displays the shrunken domain $S_\lambda(\overline{\Delta^{(3)}})$ nested within the original simplex (dashed lines).}
		\label{fig:ShrinkTransformationR3}
	\end{figure}
	
	As illustrated in Figure~\ref{fig:ShrinkTransformationR3}, $S_{\lambda}$ preserves the simplicial structure while mapping the standard closed simplex $\overline{\Delta^{(n)}}$ to a compact subset strictly contained in the relative interior of $\Delta^{(n)}$.
	
	\paragraph{Domain Construction and Strategy Generation.}
	We utilize this transformation to define the nested domains required for our analysis. Let $\lambda \in (0,1)$ be the domain parameter and $\varepsilon \in (0, \lambda)$ be the market domain parameter.
	\begin{enumerate}
		\item \textbf{Generating domain ($\overline{D_\lambda}$):} The domain of the generating function is the image of the closed simplex,
		\begin{equation}
			\overline{D_\lambda} \coloneqq S_\lambda(\overline{\Delta^{(n)}}) \subset \Delta^{(n)}.
		\end{equation}
		Equivalently, we write $D_\lambda \coloneqq S_\lambda(\Delta^{(n)})$ so that $\overline{D_\lambda}=S_\lambda(\overline{\Delta^{(n)}})$.
		The boundary $\partial D_\lambda = S_\lambda(\partial \Delta^{(n)})$ serves as the zero boundary required by Proposition \ref{prop:LogGradientDivergence}.
		
		\item \textbf{Market state space ($K_\varepsilon$):} The market state space is defined as a slightly smaller shrunken simplex,
		\begin{equation}
			K_\varepsilon \coloneqq S_\varepsilon(\Delta^{(n)}) \subset D_\lambda.
		\end{equation}
	\end{enumerate}

	Since $\varepsilon<\lambda$, the market state space $K_\varepsilon$ is a smaller shrunken simplex contained in $D_\lambda$. As $\varepsilon\uparrow\lambda$, the set $K_\varepsilon$ expands toward $\partial D_\lambda$.
	With this geometric setup, we construct strategies by pulling back standard concave functions. Let $\Psi: \overline{\Delta^{(n)}} \to [0, \infty)$ be a base concave function that is positive on $\Delta^{(n)}$ and vanishes on some points of the boundary $\partial \Delta^{(n)}$. We define the \emph{shrunken generating function} $\Phi_\lambda$ on $\overline{D_\lambda}$ by
	\begin{equation}
		\Phi_\lambda(p) \coloneqq \Psi(S_\lambda^{-1}(p)).
	\end{equation}
	By construction, $\Phi_\lambda$ is concave on $\overline{D_\lambda}$. Moreover, $\Phi_\lambda$ vanishes on those boundary points $q\in\partial D_\lambda$ for which $S_\lambda^{-1}(q)$ belongs to the zero set of $\Psi$ on $\partial\Delta^{(n)}$. The gap $\lambda - \varepsilon$ controls the proximity of the market state space $K_\varepsilon$ to this zero boundary. When $\varepsilon$ is close to $\lambda$, the set $K_\varepsilon$ approaches $\partial D_\lambda$ and hence enters the regime where the logarithmic gradient becomes steep, which leads to short positions by Corollary \ref{coro:ShortSellingNearBoundary}.
	
	We now examine two canonical classes of such portfolios, distinguished by the structure of their zero sets.
	
	\subsection{Example 1: The Shrunken Equal-Weighted Portfolio (SEWP)}
	\label{subsubsec:SEWP}
	Our first example generalizes the classical equal-weighted portfolio (EWP) within the bankruptcy-proof setting. The naive $1/n$ rule is arguably the most widely accepted and transparent diversification benchmark: it requires essentially no estimation and is frequently used as a baseline in the academic asset-allocation literature \cite{BenartziThaler2001NaiveDiversification,DeMiguel2009Equalstragtegy}. In particular, DeMiguel, Garlappi and Uppal \cite{DeMiguel2009Equalstragtegy} show that the rebalanced $1/n$ strategy is remarkably difficult to outperform out of sample, which helps explain its continued role as a strong reference strategy alongside more sophisticated optimization-based allocations \cite{Duchin2009MarkowitzVersusTalmudic,Pflug2012EqualInvestmentStrategyIsOptimal,TU2011MarkowitzMeetsTalmud}.
	From a mathematical-finance perspective, EWP is a special case of constant-weighted portfolios (CWPs), for which one can obtain model-free long-run performance guarantees in terms of log-wealth growth. In particular, Cover's universal portfolio theory provides a distribution-free procedure whose asymptotic growth rate competes with the best retrospectively chosen CWP \cite{Cover1991UniversalPortfolios}; further algorithmic and continuous-time links to stochastic portfolio theory and the num\'eraire portfolio are developed in \cite{Cuchiero2019Cover'sUniversalPortfolioSPT,Helmbold1998MultiplicativeUpdates}.
	We apply barycentric scaling to the EWP to regulate its boundary behavior. The base function is the geometric mean,
	\begin{equation}
		\Psi_{\text{EWP}}(x) = \left( \prod_{i=1}^{n} x_i \right)^{\frac{1}{n}},
	\end{equation}
	which is concave on $\Delta^{(n)}$ and vanishes on the entire boundary $\partial \Delta^{(n)}$.
	
	By applying the barycentric scaling construction from Section \ref{subsec:ScalingMethodology}, we define the \emph{shrunken EWP} (SEWP).
	
	\begin{definition}[Shrunken Equal-Weighted Portfolio]
		\label{def:SEWP}
		Let $\lambda \in (0, 1)$ define the generating domain $D_\lambda = S_\lambda(\Delta^{(n)})$. The SEWP is the portfolio $\pi$ generated by the function
		\begin{equation}
			\Phi_{\text{SEWP}}(p) \coloneqq \Psi_{\text{EWP}} \circ S_\lambda^{-1}(p) : D_\lambda \to (0, \infty).
		\end{equation}
		By standard calculus, the portfolio weights for any market weight vector $\mu \in D_\lambda$ are given component-wise by
		\begin{equation}
			\frac{\pi_i(\mu)}{\mu_i} = \frac{1}{n \mu_i + \lambda - 1} + 1 - \sum_{j=1}^{n} \frac{\mu_j}{n \mu_j + \lambda - 1}.
			\label{eq:SEWPShortsell}
		\end{equation}
	\end{definition}
	
	Since $0<\lambda<1$, the SEWP is no longer a static portfolio; when $\lambda=1$, the construction reduces to the classical equal-weighted portfolio. This strategy exhibits a geometric phase transition. Since $\Phi_{\text{SEWP}}$ attains its maximum at the barycenter $\bar{e}$, Proposition \ref{prop:LongOnlyRegion} guarantees the existence of a long-only region around the geometric center of the simplex. Conversely, since $\Phi_{\text{SEWP}}$ vanishes on the entire boundary $\partial D_\lambda$, Corollary \ref{coro:ShortSellingNearBoundary} implies that short positions are structurally enforced near $\partial D_\lambda$.
	
	We now quantify this behavior within the market state space $K_\varepsilon = S_\varepsilon(\Delta^{(n)})$ (where $0 < \varepsilon < \lambda$). Based on the portfolio weights, we distinguish three regions in $K_\varepsilon$: a central core of long-only positions, an outer shell involving short-selling positions, and a transition zone. This is a qualitative description based on sufficient conditions and does not claim an exact partition.
	
	\paragraph{Region I: The Long-Only Core.}
	The following proposition identifies a sufficient condition for the strategy to remain long-only within the market state space $K_\varepsilon$.
	Since $\Phi_{\mathrm{SEWP}}$ is $C^1$ on $D_\lambda$, the portfolio map $\pi$ given by \eqref{eq:FGDifferential_Local} is continuous on $D_\lambda$. Moreover, $\overline{K_\varepsilon}\subset D_\lambda$ and $\overline{K_\varepsilon}$ is compact. Therefore, $\pi$ is long-only on $K_\varepsilon$ if and only if
	$\min_{\mu \in \overline{K_\varepsilon}}\pi_i(\mu)\ge 0$ for all $1\le i\le n$.
	In particular, it suffices to analyze nonnegativity on the compact set $\overline{K_\varepsilon}$.
	
	\begin{prop}[Threshold for Pure Long-Only Behavior]
		\label{prop:EWPAllLongArea}
		Consider the SEWP defined by $\lambda$. There exists a unique critical threshold $\varepsilon^* \in (0, \lambda)$ given by
		\begin{equation}
			\varepsilon^* = \frac{\lambda^2}{(n-1) - (n-2)\lambda},
			\label{eq:OptimalEpsilon}
		\end{equation}
		such that if $0 < \varepsilon \le \varepsilon^*$, then the portfolio is long-only (i.e., $\pi_i(\mu)\ge 0$) for all $\mu \in \overline{K_\varepsilon}$.
	\end{prop}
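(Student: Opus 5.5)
The plan is to rewrite the SEWP weights \eqref{eq:SEWPShortsell} in the shifted variables
\[
a_j:=n\mu_j+\lambda-1 ,
\]
reduce the sign condition for $\pi_i$ to a single inequality in the $a_j$, and then optimize that inequality over $\overline{K_\varepsilon}$ explicitly. Since $a_j=n\lambda\,(S_\lambda^{-1}\mu)_j$, we have $a_j>0$ on $D_\lambda$.

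\textbf{Step 1: simplify the weights.} Using $\mu_j=(a_j+1-\lambda)/n$, one gets
\[
\frac{\mu_j}{a_j}=\frac1n+\frac{1-\lambda}{n a_j}.
\]
Substituting into \eqref{eq:SEWPShortsell}, the constant terms $1$ and $-\sum_j 1/n$ cancel, leaving
\[
\frac{\pi_i(\mu)}{\mu_i}=\frac{1}{a_i}-\frac{1-\lambda}{n}\sum_{j=1}^n\frac{1}{a_j}.
\]
Since $\mu_i>0$, multiplying by $n$ and moving the $j=i$ term to the left shows that $\pi_i(\mu)\ge0$ is equivalent to
\[
\frac{n-1+\lambda}{a_i}\;\ge\;(1-\lambda)\sum_{j\ne i}\frac{1}{a_j}. \qquad (\ast)
\]

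\textbf{Step 2: parametrize $\overline{K_\varepsilon}$ and find the worst case.} Write $\mu=S_\varepsilon(x)$ with $x\in\overline{\Delta^{(n)}}$. Then
\[
a_j=n\varepsilon x_j+\lambda-\varepsilon \in[\lambda-\varepsilon,\;\lambda-\varepsilon+n\varepsilon].
\]
The left side of $(\ast)$ is bounded below by its value at $a_i=\lambda+(n-1)\varepsilon$. The right side is bounded above by $(1-\lambda)(n-1)/(\lambda-\varepsilon)$, attained when every $a_j$, $j\ne i$, equals $\lambda-\varepsilon$. Both extremes are attained simultaneously at the single point $x=e_i$, i.e.\ $\mu=S_\varepsilon(e_i)\in\overline{K_\varepsilon}$. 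Hence $\min_{\overline{K_\varepsilon}}\pi_i\ge0$ holds if and only if
\[
(n-1+\lambda)(\lambda-\varepsilon)\;\ge\;(1-\lambda)(n-1)\bigl(\lambda+(n-1)\varepsilon\bigr).
\]
By symmetry, the same condition covers every $i$.

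\textbf{Step 3: solve for the threshold.} Collecting terms, the $\lambda$-coefficient is
\[
(n-1+\lambda)-(1-\lambda)(n-1)=n\lambda .
\]
The $\varepsilon$-coefficient is
\[
(n-1+\lambda)+(1-\lambda)(n-1)^2=n\bigl[(n-1)-(n-2)\lambda\bigr].
\]
The inequality therefore becomes $n\lambda^2\ge n\varepsilon\bigl[(n-1)-(n-2)\lambda\bigr]$, that is, $\varepsilon\le\varepsilon^*$ as in \eqref{eq:OptimalEpsilon}.

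Two facts about $\varepsilon^*$ remain to check:
\begin{itemize}
\item $0<\varepsilon^*<\lambda$. The denominator $(n-1)-(n-2)\lambda$ is positive, and $\lambda^2<\lambda\bigl[(n-1)-(n-2)\lambda\bigr]$ is equivalent to $(n-1)\lambda<n-1$, which holds since $\lambda<1$.
\item Uniqueness of the threshold. The reduction in Step 2 is an equivalence, not merely a sufficient condition. So for $\varepsilon>\varepsilon^*$ we get $\pi_i(S_\varepsilon(e_i))<0$, and by continuity $\pi_i<0$ at nearby points of $K_\varepsilon$. Thus $\varepsilon^*$ is exactly the sharp cutoff.
\end{itemize}

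\textbf{Main obstacle.} I expect the only delicate point to be Step 2: justifying that the joint extremum of the two sides of $(\ast)$ is attained at one vertex. I would argue this directly from the componentwise bounds on the $a_j$, as above, rather than through a Lagrange-multiplier computation. The rest is bookkeeping of the algebra in Steps 1 and 3.
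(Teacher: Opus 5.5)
Your proof is correct and follows the same route as the paper. Both arguments locate the minimum of $\pi_i/\mu_i$ over $\overline{K_\varepsilon}$ at the vertex $S_\varepsilon(e_i)$ by coordinatewise monotonicity and then solve the resulting threshold condition. Your substitution $a_j=n\mu_j+\lambda-1$, giving $\pi_i/\mu_i=1/a_i-\frac{1-\lambda}{n}\sum_j 1/a_j$, makes the vertex argument and the algebra explicit, and it turns the threshold into a linear inequality in $\varepsilon$, so sharpness follows directly instead of via the paper's strict monotonicity of $g$ and the intermediate value theorem.
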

	
	\begin{proof}
		Let $f_i(\mu)$ denote the normalized portfolio weight function defined in \eqref{eq:SEWPShortsell}:
		\[
		f_i(\mu) \coloneqq \frac{\pi_i(\mu)}{\mu_i}
		= \frac{1}{n \mu_i + \lambda - 1} + 1 - \sum_{j=1}^{n} \frac{\mu_j}{n \mu_j + \lambda - 1}.
		\]
		Since $\mu_i > 0$, the condition $\pi_i(\mu) \ge 0$ is equivalent to $f_i(\mu) \ge 0$.
		By symmetry, it suffices to analyze $f_1$. Since $f_1$ is continuous on $D_\lambda$ and $\overline{K_\varepsilon}$ is compact, we have
		$\inf_{\mu\in K_\varepsilon} f_1(\mu)=\min_{\mu\in \overline{K_\varepsilon}} f_1(\mu)$.
		Therefore, $\pi$ is long-only on $K_\varepsilon$ if and only if $\min_{\mu\in \overline{K_\varepsilon}} f_1(\mu)\ge 0$.
		
		We analyze the gradient of $f_1(\mu)$ to locate its minimum. Direct computation yields
		\begin{equation}
			\frac{\partial f_1}{\partial \mu_1} = -\frac{n+\lambda-1}{(n\mu_1+\lambda-1)^2},
			\qquad
			\frac{\partial f_1}{\partial \mu_j} = \frac{1-\lambda}{(n\mu_j+\lambda-1)^2} \quad \text{for } j \neq 1.
		\end{equation}
		Given $n \ge 2$ and $0 < \lambda < 1$, we have $-(n+\lambda-1) < 0$ and $1-\lambda > 0$, and hence
		\begin{equation}
			\frac{\partial f_1}{\partial \mu_1} < 0,
			\qquad
			\frac{\partial f_1}{\partial \mu_j} > 0 \quad \text{for all } j \neq 1.
		\end{equation}
		
		These inequalities imply that $f_1$ has no interior critical points. Moreover, to decrease $f_1$ one must increase $\mu_1$ and decrease $\mu_j$ for $j\neq 1$. Since $\overline{K_\varepsilon}$ is a simplex defined by linear constraints, the unique point that maximizes the first coordinate and minimizes the others is the vertex
		$\mu^* \coloneqq S_\varepsilon(e_1)$. In coordinates,
		\[
		\mu^* = \left(\varepsilon + \frac{1-\varepsilon}{n}, \frac{1-\varepsilon}{n}, \ldots, \frac{1-\varepsilon}{n}\right).
		\]
		
		Let $g(\varepsilon) \coloneqq f_1(\mu^*)$ denote the minimum value of $f_1$ over $\overline{K_\varepsilon}$ as a function of $\varepsilon$. Substituting $\mu^*$ into the expression for $f_1$, we obtain
		\begin{equation}
			g(\varepsilon)
			= 1 + \frac{(1-\varepsilon)(n-1)}{n}\left[\frac{1}{\lambda+(n-1)\varepsilon}-\frac{1}{\lambda-\varepsilon}\right].
		\end{equation}
		We record the boundary behavior:
		\begin{itemize}
			\item As $\varepsilon \to 0$, $\mu^* \to \bar{e}$. Since the SEWP is long-only at $\bar e$ (Proposition \ref{prop:LongOnlyRegion}), we have $g(0) = f_1(\bar{e}) = 1 > 0$.
			\item As $\varepsilon \uparrow \lambda$, $\mu^*$ approaches the boundary of $D_\lambda$, and the denominator $\lambda-\varepsilon$ tends to $0$. Consequently, $g(\varepsilon)\to -\infty$, which is consistent with Corollary \ref{coro:ShortSellingNearBoundary}.
		\end{itemize}
		A direct computation shows that for $\varepsilon\in(0,\lambda)$
		\[g'(\varepsilon)=\frac{n-1}{n}[\frac{\lambda-1}{(\lambda-\varepsilon)^2}-\frac{\lambda+n-1}{(\lambda+(n-1)\varepsilon)^2}]<0,\]
		and hence $g$ is strictly decreasing on $(0,\lambda)$.
		By the intermediate value theorem, there exists a unique $\varepsilon^* \in (0, \lambda)$ such that $g(\varepsilon^*) = 0$.
		
		Solving $g(\varepsilon^*) = 0$ yields \eqref{eq:OptimalEpsilon}.
		Therefore, $f_i(\mu) \ge 0$ for all $i$ and all $\mu \in \overline{K_\varepsilon}$ holds if and only if the global minimum is nonnegative, that is, $g(\varepsilon)\ge 0$, which is equivalent to $0 < \varepsilon \le \varepsilon^*$.
	\end{proof}
	
	\begin{remark}
		Notably, for a fixed domain parameter $\lambda$, $\varepsilon^*$ is a decreasing function of the dimension $n$. Therefore, the sufficient long-only simplex region $\overline{K_\varepsilon}$ guaranteed by Proposition \ref{prop:EWPAllLongArea} becomes smaller as $n$ increases (for fixed $\lambda$ and $\varepsilon\le \varepsilon^*$).
		We stress that this conclusion concerns only this sufficient region and does not claim that the entire long-only region of the SEWP shrinks with $n$.
	\end{remark}
	
	\paragraph{Region II: The Short-Selling Shell.}
	When the market moves sufficiently far from the center, the strategy is forced to take short positions in assets that become relatively large in market weight within the domain.\footnote{We use ``short-selling shell'' as a geometric descriptor; this is not meant to imply an exact partition of $K_\varepsilon$.} The next proposition provides a sufficient condition for short selling in terms of market concentration.
	
	\begin{prop}[Sufficient Condition for Short Positions]
		\label{prop:ShortCondition}
		Let $\varepsilon^*$ be the unique threshold defined in Proposition \ref{prop:EWPAllLongArea}, and define
		\begin{equation}
			r_1^* \coloneqq \varepsilon^* + \frac{1-\varepsilon^*}{n}.
		\end{equation}
		If the market weight vector $\mu$ satisfies $\mu_i > r_1^*$ for some $i$, then the SEWP necessarily takes a short position in asset $i$, that is,
		\begin{equation}
			\mu_i > r_1^* \implies \pi_i(\mu) < 0.
		\end{equation}
	\end{prop}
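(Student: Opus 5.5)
The plan is to reduce the statement to the one-parameter function $g$ from the proof of Proposition~\ref{prop:EWPAllLongArea}. I will bound $f_i(\mu)=\pi_i(\mu)/\mu_i$ from above by its value at the most favourable configuration with the same $i$-th coordinate, and show that this configuration is exactly a vertex $S_\varepsilon(e_i)$ of some shrunken simplex. By symmetry it suffices to treat $i=1$. Since $\mu_1>0$, the claim $\pi_1(\mu)<0$ is equivalent to $f_1(\mu)<0$. Throughout, $\mu\in D_\lambda$, so that $n\mu_j+\lambda-1>0$ for every $j$ and \eqref{eq:SEWPShortsell} is well defined.

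\textbf{Step 1: maximise over the other coordinates.} Fix $a:=\mu_1$ and write
\[
h(t):=\frac{t}{nt+\lambda-1}=\frac1n\Bigl(1+\frac{1-\lambda}{nt+\lambda-1}\Bigr).
\]
The function $h$ is convex (and decreasing) on $\{t:\ nt+\lambda-1>0\}$, since $1-\lambda>0$. We have
\[
f_1(\mu)=\frac{1}{na+\lambda-1}+1-h(a)-\sum_{j\ge2}h(\mu_j).
\]
Under the constraint $\sum_{j\ge2}\mu_j=1-a$, Jensen's inequality gives $\sum_{j\ge2}h(\mu_j)\ge (n-1)\,h\bigl(\tfrac{1-a}{n-1}\bigr)$. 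Therefore
\[
f_1(\mu)\le f_1\Bigl(a,\tfrac{1-a}{n-1},\ldots,\tfrac{1-a}{n-1}\Bigr).
\]
The equal-split point still lies in $D_\lambda$: the set $D_\lambda$ is convex and symmetric in the coordinates $2,\ldots,n$, and the equal-split point is the average of permutations of $\mu$.

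\textbf{Step 2: identify the bound with $g$.} Set $\varepsilon:=\frac{na-1}{n-1}$, so that $a=\varepsilon+\frac{1-\varepsilon}{n}$ and $\frac{1-a}{n-1}=\frac{1-\varepsilon}{n}$. The equal-split point is then exactly $S_\varepsilon(e_1)=\mu^*$, and the bound in Step 1 equals $g(\varepsilon)$ as computed in the proof of Proposition~\ref{prop:EWPAllLongArea}.

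Two range checks are needed to use the properties of $g$:
\begin{itemize}
\item[] Since $\mu\in D_\lambda$, we have $\mu_1<\lambda+\frac{1-\lambda}{n}$, which gives $\varepsilon<\lambda$.
\item[] The map $a\mapsto\varepsilon$ is increasing, and $r_1^*$ corresponds to $\varepsilon^*$. Hence $a>r_1^*$ gives $\varepsilon>\varepsilon^*>0$.
\end{itemize}

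\textbf{Step 3: conclude by monotonicity.} By Proposition~\ref{prop:EWPAllLongArea}, $g$ is strictly decreasing on $(0,\lambda)$ with $g(\varepsilon^*)=0$. Combining this with Steps 1 and 2,
\[
f_1(\mu)\le g(\varepsilon)<g(\varepsilon^*)=0,
\]
so $\pi_1(\mu)<0$.

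The main obstacle I expect is Step 1. The partial-derivative signs used in the previous proof only show that $f_1$ increases in each $\mu_j$ separately. They do not control moves that preserve the budget constraint, so an upper bound requires the convexity of $h$ (Jensen) rather than coordinatewise monotonicity. A secondary point is to make sure the monotonicity of $g$, including the sign claimed for $g'$, is valid on the whole interval $(\varepsilon^*,\lambda)$. If needed, I will verify this directly from the explicit formula for $g$: the bracket term $\frac{1}{\lambda+(n-1)\varepsilon}-\frac{1}{\lambda-\varepsilon}$ is negative and strictly decreasing, while its positive prefactor $\frac{(1-\varepsilon)(n-1)}{n}$ is also decreasing. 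For this product the argument is not immediate, so I would fall back on the derivative formula stated in the paper.
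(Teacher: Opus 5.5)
Your proposal is correct and is essentially the paper's proof with the two steps in the opposite order. Jensen's inequality for the convex function $h(t)=t/(nt+\lambda-1)$ replaces $\mu$ by the point on the axis through $\bar e$ and $e_1$ with the same first coordinate. Strict monotonicity along that axis then gives $f_1(\mu)<f_1(S_{\varepsilon^*}(e_1))=0$; you express this monotonicity through $g$ via $\varepsilon=(n\mu_1-1)/(n-1)$, while the paper reads it off the signs of the partial derivatives. Your remaining concern about $g$ is easily dispelled: $g(\varepsilon)=1+\frac{n-1}{n}\bigl[\frac{1-\varepsilon}{\lambda+(n-1)\varepsilon}-1-\frac{1-\lambda}{\lambda-\varepsilon}\bigr]$ is visibly a strictly decreasing term minus a strictly increasing one on $(0,\lambda)$, in agreement with the paper's formula for $g'$.
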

	
	\begin{proof}
		Without loss of generality, assume $\mu_1 > r_1^*$. We aim to show that $f_1(\mu) < 0$, where $f_1$ is defined in \eqref{eq:SEWPShortsell}.
		
		Let $r = S_{\varepsilon^*}(e_1)$. By definition, its first coordinate is $r_1 = r_1^*$, and from the proof of Proposition \ref{prop:EWPAllLongArea} we have $f_1(r) = 0$.
		
		Our proof compares $\mu$ with a reference point on the symmetry axis $L$ connecting the barycenter $\bar{e}$ to the vertex $e_1$.
		
		\emph{Step 1: Monotonicity along the symmetry axis.}
		Any $q \in L$ has the form $q = (x, \frac{1-x}{n-1}, \dots, \frac{1-x}{n-1})$. Using the sign structure of the partial derivatives established above, increasing the first component and decreasing the remaining components strictly decreases $f_1$. Thus, for any $q \in L$ with $q_1 > r_1^*$,
		\[
		f_1(q) < f_1(r) = 0.
		\]
		
		\emph{Step 2: Jensen's inequality.}
		Now fix $\mu\notin L$ with $\mu_1>r_1^*$ and let $q\in L$ be the point with $q_1=\mu_1$ and $q_j=(1-\mu_1)/(n-1)$ for $j\ge 2$. Then $q_1>r_1^*$, so Step 1 gives $f_1(q)<0$. We claim that $f_1(\mu)\le f_1(q)$.
		
		Write
		\[
		f_1(\mu) - f_1(q)
		= \sum_{j=2}^{n} \left( \frac{q_j}{nq_j+\lambda-1} - \frac{\mu_j}{n\mu_j+\lambda-1} \right)
		= \sum_{j=2}^{n} \bigl( h(q_j) - h(\mu_j) \bigr),
		\]
		where $h(x) \coloneqq \frac{x}{nx+\lambda-1}$. A computation gives
		\[
		h''(x) = \frac{-2n(\lambda-1)}{(nx+\lambda-1)^3},
		\]
		which is strictly positive on $D_\lambda$ (since $\lambda<1$ and $nx+\lambda-1>0$ on $D_\lambda$). Hence $h$ is strictly convex on $D_\lambda$.
		Moreover, $q_j$ is the arithmetic mean of the coordinates $\{\mu_j\}_{j=2}^n$. By Jensen's inequality,
		\[
		\frac{1}{n-1}\sum_{j=2}^n h(\mu_j)
		\ge h\!\left(\frac{\sum_{j=2}^n \mu_j}{n-1}\right)
		= h(q_j).
		\]
		Multiplying by $n-1$ yields $\sum_{j=2}^n h(\mu_j) \ge \sum_{j=2}^n h(q_j)$, and hence
		\[
		f_1(\mu) - f_1(q) \le 0.
		\]
		Therefore $f_1(\mu)\le f_1(q)<0$, which implies $f_1(\mu)<0$ and hence $\pi_1(\mu)<0$.
	\end{proof}
	
	\paragraph{Geometry of the State Space.}
	Combining these results, for a given domain parameter $\lambda$, the size $\varepsilon$ of the market state space determines the qualitative behavior of the strategy:
	\begin{enumerate}
		\item \textbf{Pure Long-Only Phase ($0 < \varepsilon \le \varepsilon^*$):} The market state space $K_\varepsilon$ is contained entirely within the long-only region. The strategy behaves like a classical functionally generated portfolio, rebalancing without leverage.
		\item \textbf{Mixed Phase ($\varepsilon^* < \varepsilon < \lambda$):} The market state space $K_\varepsilon$ extends into a region where short positions occur.
		\begin{itemize}
			\item The core region $K_{\varepsilon^*}$ remains long-only.
			\item A sufficient condition for short selling is $\max_{1\le i\le n}\mu_i>r_1^*$, as in Proposition \ref{prop:ShortCondition}.
		\end{itemize}
	\end{enumerate}

	To visualize the qualitative regimes predicted by Propositions \ref{prop:EWPAllLongArea} and \ref{prop:ShortCondition}, we present a numerical example in $\mathbb{R}^3$ with $\lambda = 0.885$.\footnote{The choice of $\lambda$ is aligned with the empirical study in Section~\ref{sec:EmpiricalExamples}. The parameter is chosen to be neither too small nor too close to $1$. If $\lambda$ is too small, then for market weights close to the boundary of the simplex, the SEWP may fail to be well defined on the relevant market state space. If $\lambda$ is too close to the market boundary, then the portfolio can exhibit a large short-selling magnitude near $\partial D_\lambda$ (cf.\ Corollary~\ref{coro:ShortSellingNearBoundary}), which obscures the qualitative behavior in three-dimensional visualizations. On the other hand, if $\lambda$ is too close to $1$, then \eqref{eq:SEWPShortsell} shows that the SEWP approaches a constant-weighted (equal-weighted) portfolio, making it harder to illustrate the distinctive features of the SEWP.}
	Figure \ref{fig:f_3_SEWP_R3} plots the normalized portfolio weight of the third asset,
	\begin{equation}
		f^{\text{SEWP}}_3(\mu) \coloneqq \frac{\pi_3(\mu)}{\mu_3}
		= \frac{1}{3 \mu_3 + \lambda - 1} + 1 - \sum_{j=1}^{3} \frac{\mu_j}{3 \mu_j + \lambda - 1}.
	\end{equation}
	Since $\mu_3 > 0$, the sign of $f^{\text{SEWP}}_3$ is the same as the sign of $\pi_3$. Therefore, it suffices to analyze $f^{\text{SEWP}}_3$ in order to identify regions of short selling.
	
	\begin{figure}[H]
		\centering
		\subfloat[Pure long-only phase: $K=S_{\varepsilon^*}(\Delta^{(3)})$]{
			\includegraphics[width=0.45\linewidth]{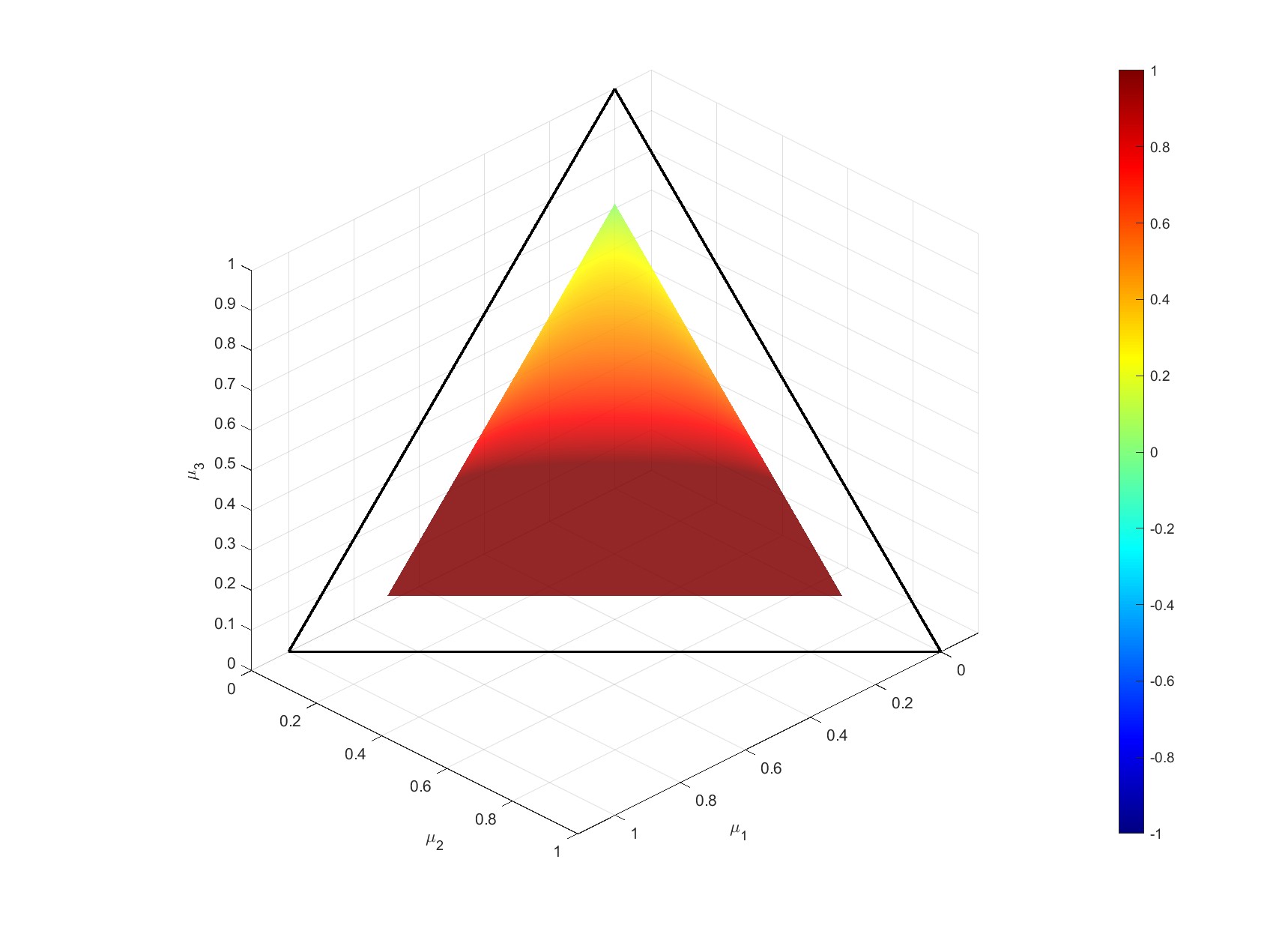}}
		\hspace{0in}
		\subfloat[Mixed phase: $K=S_{0.88}(\Delta^{(3)})$]{
			\includegraphics[width=0.45\linewidth]{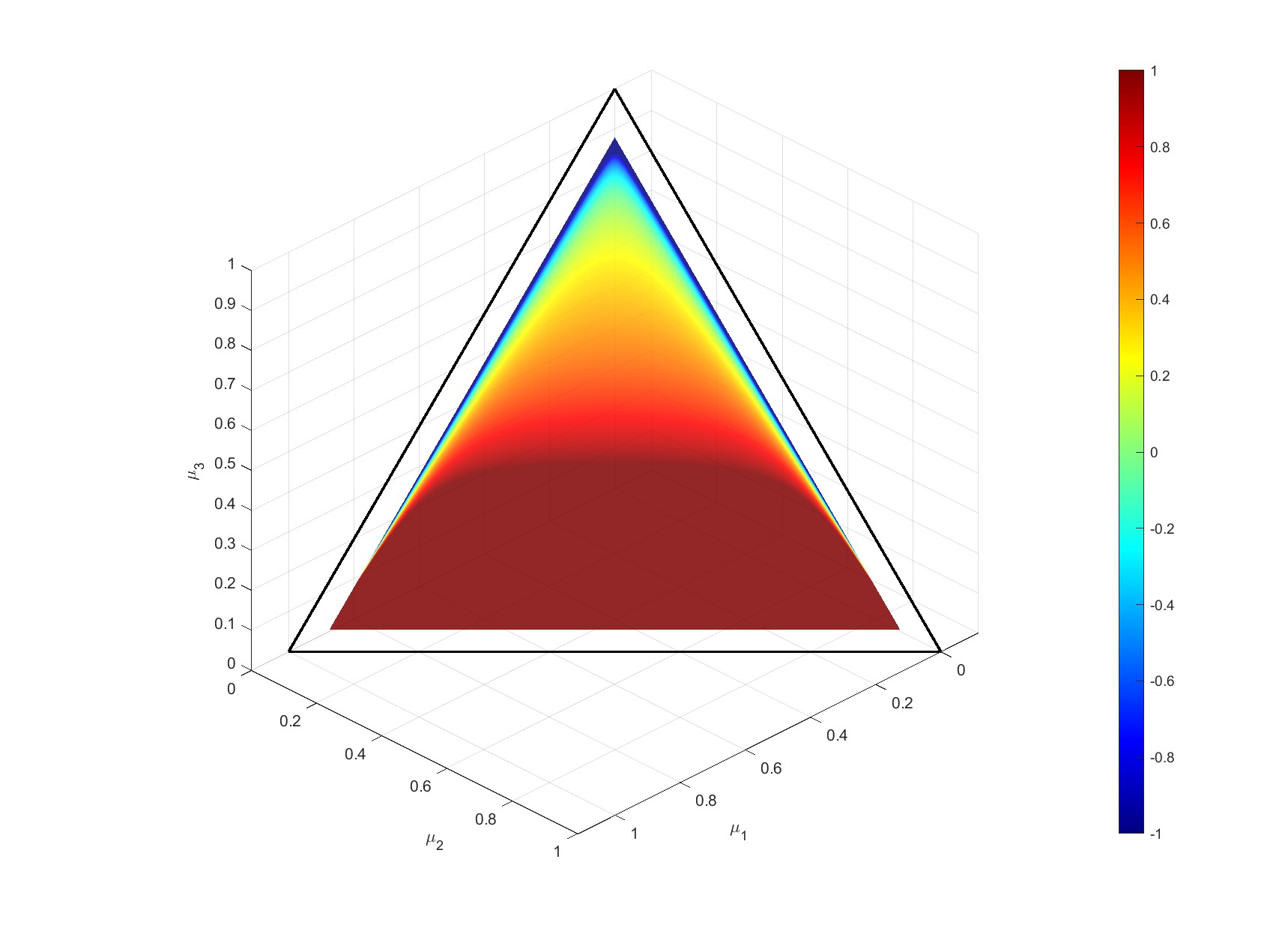}}
		\caption{Heatmap of the normalized third component $f^{\text{SEWP}}_3 = \pi_3/\mu_3$ with $\lambda=0.885$. Warm colors denote long positions ($f > 0$), while cool colors denote short positions ($f < 0$).}
		\label{fig:f_3_SEWP_R3}
	\end{figure}
	
	Subfigure (a) corresponds to the critical threshold $\varepsilon^*=0.70$ derived in Proposition \ref{prop:EWPAllLongArea}. The entire surface remains nonnegative, confirming that the strategy is long-only on this core region.
	
	Subfigure (b) expands the market state space to $\varepsilon = 0.88 > \varepsilon^*$.\footnote{The value $\varepsilon=0.88$ is chosen primarily for visualization. Since we aim to exhibit short selling, we require $\varepsilon>\varepsilon^*=0.70$. We therefore choose $\varepsilon$ to be above $\varepsilon^*$ while remaining below $\lambda$.}
	As predicted by Proposition \ref{prop:ShortCondition}, short positions (blue regions) emerge near the vertex $e_3$ where the market weight $\mu_3$ is highly concentrated.
	It is worth noting that the short-selling region visualized here extends beyond the sufficient condition in Proposition \ref{prop:ShortCondition}. The zero contour of $f^{\text{SEWP}}_3$ defines the exact boundary of the set $\{\mu:\pi_3(\mu)<0\}$, revealing a transition region that is more complex than the sufficient conditions alone.
	\subsection{Example 2: The Shrunken Entropy Portfolio (SEP)}
	\label{subsubsec:SEP}
	
	Our second example follows the same barycentric scaling framework but uses the Shannon entropy function \cite{Cover2006ElementsofInformationTheory,Jost2006EntropyandDiversity}. While the resulting expressions differ from those of the SEWP, the geometric intuition is analogous.
	
	The choice of Shannon entropy is motivated by robust theoretical and empirical foundations. In mathematical finance, it serves as a canonical measure of market diversity in Stochastic Portfolio Theory \cite{Fernholz2002SPT}.
	
	From a risk management perspective, recent work by Mercurio et al.\ \cite{Mercurio2020Entropy-BasedApproach} highlights entropy's efficacy in accommodating non-normal and asymmetric return distributions without relying on covariance matrices. Furthermore, Ruf and Xie \cite{Ruf2020ImpactPropotioalTransactionCosts} empirically study the impact of proportional transaction costs across several systematically generated portfolios and find that the entropy-weighted portfolio is less sensitive to transaction costs than the equally weighted portfolio; however, its relative performance depends on the rebalancing and market-configuration choices (e.g., trading frequency and constituent list size).
	
	Leveraging these information-theoretic and geometric properties, the underlying generating function is defined as
	\begin{equation}
		\Psi_{\text{Ent}}(x) = -\sum_{i=1}^{n} x_i \log x_i.
	\end{equation}
	By applying the barycentric scaling transformation, we construct the \emph{shrunken entropy-weighted portfolio} (SEP).
	
	\begin{definition}[Shrunken Entropy-weighted Portfolio]
		\label{def:SEP}
		Let $\lambda \in (0, 1)$ define the generating domain $D_\lambda = S_\lambda(\Delta^{(n)})$. The SEP is generated by the function
		\begin{equation}
			\Phi_{\text{SEP}}(p) \coloneqq \Psi_{\text{Ent}} \circ S_\lambda^{-1}(p) : D_\lambda \to (0, \infty).
		\end{equation}
		
		Substituting $\Phi_{\text{SEP}}$ into the local generation formula \eqref{eq:FGDifferential_Local}, we obtain the portfolio weights explicitly.
		For a market weight vector $\mu \in D_\lambda$, the normalized weight for asset $i$ is given by
		\begin{equation}
			\label{eq:SEP_Weights_Mu}
			\frac{\pi_i(\mu)}{\mu_i}
			= 1 + \frac{\log y_i(\mu) - \sum_{j=1}^{n} \mu_j \log y_j(\mu)}{\lambda \sum_{j=1}^{n} y_j(\mu) \log y_j(\mu) }
			\coloneqq f^{\text{SEP}}_i(\mu),
		\end{equation}
		where $y(\mu) = S^{-1}_\lambda(\mu)$ represents the inverse image of $\mu$ in the original simplex.
		In particular, $\pi$ is well-defined and bankruptcy-proof on any market state space $K_\varepsilon\subset D_\lambda$ with $0<\varepsilon<\lambda$.
	\end{definition}
	
	\paragraph{Region I: The Pure Long-Only Core.}
	Similar to the SEWP, the SEP exhibits a pure long-only region in a neighborhood of the barycenter. However, due to the transcendental nature of the entropy function, an explicit algebraic expression for the threshold is generally unavailable. Instead, we establish existence of a critical threshold by continuity. As in Proposition \ref{prop:EWPAllLongArea}, since $\Phi_{\text{SEP}}\in C^1(D_\lambda)$, the portfolio map $\pi$ (and hence $f_i^{\text{SEP}}$) is continuous on $D_\lambda$, and therefore it suffices to analyze the minimum of $f_i^{\text{SEP}}$ over the compact set $\overline{K_\varepsilon}$ to determine whether $\pi$ is long-only on $K_\varepsilon$.
	
	\begin{prop}[Existence of Long-Only Threshold]
		\label{prop:SEPAllLongArea}
		Consider the market state space $K_\varepsilon = S_\varepsilon(\Delta^{(n)})$ with $0 < \varepsilon < \lambda$. There exists a unique threshold $\varepsilon^* \in (0, \lambda)$ such that:
		\begin{enumerate}
			\item If $0 < \varepsilon \le \varepsilon^*$, the SEP maintains long-only positions for all $\mu \in \overline{K_\varepsilon}$.
			\item If $\varepsilon^* < \varepsilon < \lambda$, the SEP involves short selling for some $\mu \in \overline{K_\varepsilon}$.
		\end{enumerate}
	\end{prop}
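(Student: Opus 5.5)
Let $m(\varepsilon):=\min_{1\le i\le n}\min_{\mu\in\overline{K_\varepsilon}} f^{\text{SEP}}_i(\mu)$ for $\varepsilon\in(0,\lambda)$. By symmetry of $\Phi_{\text{SEP}}$ under coordinate permutations, $m(\varepsilon)=\min_{\overline{K_\varepsilon}} f^{\text{SEP}}_1$, and $\pi$ is long-only on $\overline{K_\varepsilon}$ exactly when $m(\varepsilon)\ge 0$. Since the sets $\overline{K_\varepsilon}$ are nested and increase with $\varepsilon$, $m$ is nonincreasing. The plan is to show three things: $m$ is continuous, $m(\varepsilon)>0$ for small $\varepsilon$, and $m(\varepsilon)<0$ for $\varepsilon$ close to $\lambda$. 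We then set $\varepsilon^*:=\sup\{\varepsilon\in(0,\lambda): m(\varepsilon)\ge0\}$. Monotonicity gives both items, with the endpoint $\varepsilon^*$ included by continuity. Uniqueness is automatic, since any threshold with properties 1 and 2 must equal this supremum.

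\textbf{Continuity.} Write $\overline{K_\varepsilon}=S_\varepsilon(\overline{\Delta^{(n)}})$, so $m(\varepsilon)=\min_{x\in\overline{\Delta^{(n)}}} f_1^{\text{SEP}}(\varepsilon x+(1-\varepsilon)\bar e)$. On any interval $\varepsilon\in(0,\lambda']$ with $\lambda'<\lambda$, the map $(\varepsilon,x)\mapsto f_1^{\text{SEP}}(S_\varepsilon x)$ is jointly continuous on $(0,\lambda']\times\overline{\Delta^{(n)}}$. This uses that $\overline{K_{\lambda'}}\subset D_\lambda$ and that $f^{\text{SEP}}_1$ is continuous on $D_\lambda$. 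The denominator $\lambda\sum_j y_j\log y_j$ is nonzero there, because $y\in\Delta^{(n)}$ gives $\Psi_{\text{Ent}}(y)>0$. A minimum over a fixed compact set of a jointly continuous function is continuous in the parameter, so $m$ is continuous.

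\textbf{Small $\varepsilon$.} The barycenter maximizes $\Phi_{\text{SEP}}$ by symmetry and concavity, so Proposition \ref{prop:LongOnlyRegion} gives $f^{\text{SEP}}_i(\bar e)=1$ and a neighborhood $U$ of $\bar e$ on which every $\pi_i>0$. For small $\varepsilon$ we have $\overline{K_\varepsilon}\subset U$, hence $m(\varepsilon)>0$. The set $\{m\ge0\}$ is therefore nonempty, and $\varepsilon^*>0$.

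\textbf{$\varepsilon$ near $\lambda$.} Take $q=S_\lambda(x)$ with $x$ the midpoint of the facet opposite $e_1$. Then $q\in\partial D_\lambda\cap\Delta^{(n)}$, and $\Phi_{\text{SEP}}(q)=\Psi_{\text{Ent}}(x)=\log(n-1)>0$, which does not vanish. So Corollary \ref{coro:ShortSellingNearBoundary} should instead be applied at a zero of $\Phi_{\text{SEP}}$ on $\partial D_\lambda$, namely the vertex $q=S_\lambda(e_1)$, where $\Psi_{\text{Ent}}(e_1)=0$. This $q$ lies strictly inside the segment from $\bar e$ to $e_1$, and $\Phi_{\text{SEP}}$ extends continuously to $\overline{D_\lambda}$ and is concave, positive and $C^1$ on $D_\lambda$. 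The corollary then yields a point $p=S_{\varepsilon_0}(e_1)$, $\varepsilon_0<\lambda$, on $[\bar e,q)$ with $\pi_1(p)<0$. Since $p\in\overline{K_\varepsilon}$ for all $\varepsilon\ge\varepsilon_0$, we get $m(\varepsilon)<0$ on $[\varepsilon_0,\lambda)$, and so $\varepsilon^*\le\varepsilon_0<\lambda$.

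\textbf{Main obstacle.} The only delicate point is that $\overline{K_\varepsilon}$ is the closure while item 2 demands a shorting point, which must lie in $D_\lambda$. This is fine: for $\varepsilon<\lambda$, $\overline{K_\varepsilon}\subset D_\lambda$, so every minimizer lies where $\pi$ is defined. Finally, the strict inequality $m(\varepsilon)<0$ for every $\varepsilon>\varepsilon^*$ holds directly from the definition of the supremum together with monotonicity. Unlike for the SEWP, we do not need strict monotonicity of $m$.
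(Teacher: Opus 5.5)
Your proposal is correct and follows essentially the same route as the paper. Like the paper, you define the nonincreasing minimum function over the nested compact sets $\overline{K_\varepsilon}$, obtain positivity for small $\varepsilon$ from Proposition~\ref{prop:LongOnlyRegion} and negativity near $\lambda$ from Corollary~\ref{coro:ShortSellingNearBoundary}, and set $\varepsilon^*$ to be the supremum. Your version is slightly more careful on two points the paper leaves implicit: you prove continuity of $m$ via joint continuity on $(0,\lambda']\times\overline{\Delta^{(n)}}$, and you apply the corollary explicitly at the vertex $S_\lambda(e_1)$, where $\Phi_{\text{SEP}}$ actually vanishes, along the ray from $\bar e$.
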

	
	\begin{proof}
		Define
		\[
		g(\varepsilon) \coloneqq \min_{\mu \in \overline{K_\varepsilon}} \min_{1\le i\le n} f^{\text{SEP}}_i(\mu).
		\]
		Since $f^{\text{SEP}}$ is continuous on $D_\lambda$ and $\overline{K_\varepsilon}$ is compact, the minimum is attained for each $\varepsilon\in(0,\lambda)$. Moreover, the family $\{\overline{K_\varepsilon}\}_{\varepsilon\in(0,\lambda)}$ is nested, and hence $g(\varepsilon)$ is nonincreasing in $\varepsilon$.
		
		Let $\mathcal{E} = \{ \varepsilon \in (0, \lambda) \mid g(\varepsilon) \ge 0 \}$ be the set of parameters for which the SEP is long-only on $K_\varepsilon$.
		\begin{itemize}
			\item \emph{Non-emptiness:} At the barycenter, we have $f^{\text{SEP}}_i(\bar{e}) = 1$ for all $i$. By continuity, $g(\varepsilon) > 0$ for sufficiently small $\varepsilon$, implying $\mathcal{E} \neq \emptyset$.
			\item \emph{Failure near the boundary:} As $\varepsilon \uparrow \lambda$, the vertices of $K_\varepsilon$ approach the boundary of $D_\lambda$. By Corollary \ref{coro:ShortSellingNearBoundary}, some portfolio weights become negative near $\partial D_\lambda$, and hence $g(\varepsilon) < 0$ for $\varepsilon$ sufficiently close to $\lambda$. In particular, $\mathcal{E}$ is bounded above by a value strictly less than $\lambda$.
		\end{itemize}
		
		Let $\varepsilon^* \coloneqq \sup \mathcal{E}\in(0,\lambda)$. Since $g$ is nonincreasing, it follows that $g(\varepsilon)\ge 0$ for all $\varepsilon<\varepsilon^*$ and $g(\varepsilon)<0$ for all $\varepsilon>\varepsilon^*$. By continuity of $g$, we have $g(\varepsilon^*)=0$.
		This proves the existence of the threshold and the two regimes stated in the proposition.
	\end{proof}
	
	\paragraph{Geometric Asymptotic Analysis.}
	The following heuristic asymptotic analysis helps explain the geometry of the short-selling region for the SEP.
	Since an algebraic solution for the long-only threshold is intractable, we rely on asymptotic analysis to clarify the mechanism. The portfolio weight is driven by the relative gradient term $\nabla \Phi/\Phi$.
	For the SEP, let $H(y) \coloneqq -\sum_{k=1}^n y_k \log y_k$ denote the entropy function on the latent variable $y = S_\lambda^{-1}(p)$. Since $S_\lambda$ is affine, the divergence behavior of the gradient with respect to $p$ is asymptotically equivalent to that of $\nabla_y H(y)/H(y)$.
	
	Let $d\downarrow 0$ quantify proximity to the boundary, for example $d\asymp \min_{1\le j\le n} y_j$, which is comparable (up to constants) to the Euclidean distance from $y$ to $\partial\Delta^{(n)}$ in the regimes considered below.
	\begin{itemize}
		\item \emph{Regime A: Near the vertices ($y \to e_i$).}
		Consider a sequence approaching the vertex $e_i$. In this limit, $H(y)\to 0$ since $1\log 1=0$ and $\lim_{z\downarrow 0} z\log z=0$.
		For instance, if $y_j\sim d$ for all $j\ne i$, then $H(y)\sim -(n-1)d\log d$.
		Moreover, the partial derivative diverges logarithmically: $\partial_{y_j}H(y)=-1-\log y_j \sim -\log d$.
		Consequently,
		\[
		\frac{\|\nabla H(y)\|}{H(y)} \sim \text{constant}\times\frac{|\log d|}{-d \log d} = O(\frac{1}{d}).
		\]
		This $O(1/d)$ divergence represents a comparatively strong effect: the vanishing denominator $H(y)\to 0$ amplifies the gradient term, so short selling may occur even when the market moves moderately far from the barycenter toward a vertex.
		
		\item \emph{Regime B: Near the center of a boundary face.}
		Consider a sequence approaching the center of a boundary face (for example, for $n=3$ one may take $y\to (1/2,1/2,0)$). Here, the minor component satisfies $y_n\to 0$, so $|\nabla H(y)|\sim |\log d|$.
		Crucially, $H(y)$ does \emph{not} vanish; it converges to the strictly positive entropy of the lower-dimensional face. Thus,
		\[
		\frac{\|\nabla H(y)\|}{H(y)} \sim \frac{|\log d|}{\text{constant}} = O(|\log d|).
		\]
		This logarithmic divergence is much weaker than the vertex case. It implies that the ``long-only'' region extends much closer to the boundary near the face centers than it does near the vertices.
	\end{itemize}
	This analysis suggests that the strongest blow-up of the logarithmic gradient is localized near the vertices. As a result, the short-selling region for the SEP is expected to form distinct ``lobes'' near the corners of the simplex, rather than a uniform short-selling shell.
	
	We now illustrate the behavior of the SEP and compare it with the SEWP. Figure \ref{fig:f_3_SEP_R3} displays the third component $f^{\text{SEP}}_3$ on the market state space $K_\varepsilon \subset \Delta^{(3)}\subset \mathbb{R}^3$ with $\lambda = 0.885$. The visualization convention mirrors that of the SEWP: cool colors indicate short positions, while warm colors represent long positions.
	
	\begin{figure}[H]
		\centering
		\subfloat[Pure long-only phase: $K=S_{0.5}(\Delta^{(3)})$]{
			\includegraphics[width=0.45\linewidth]{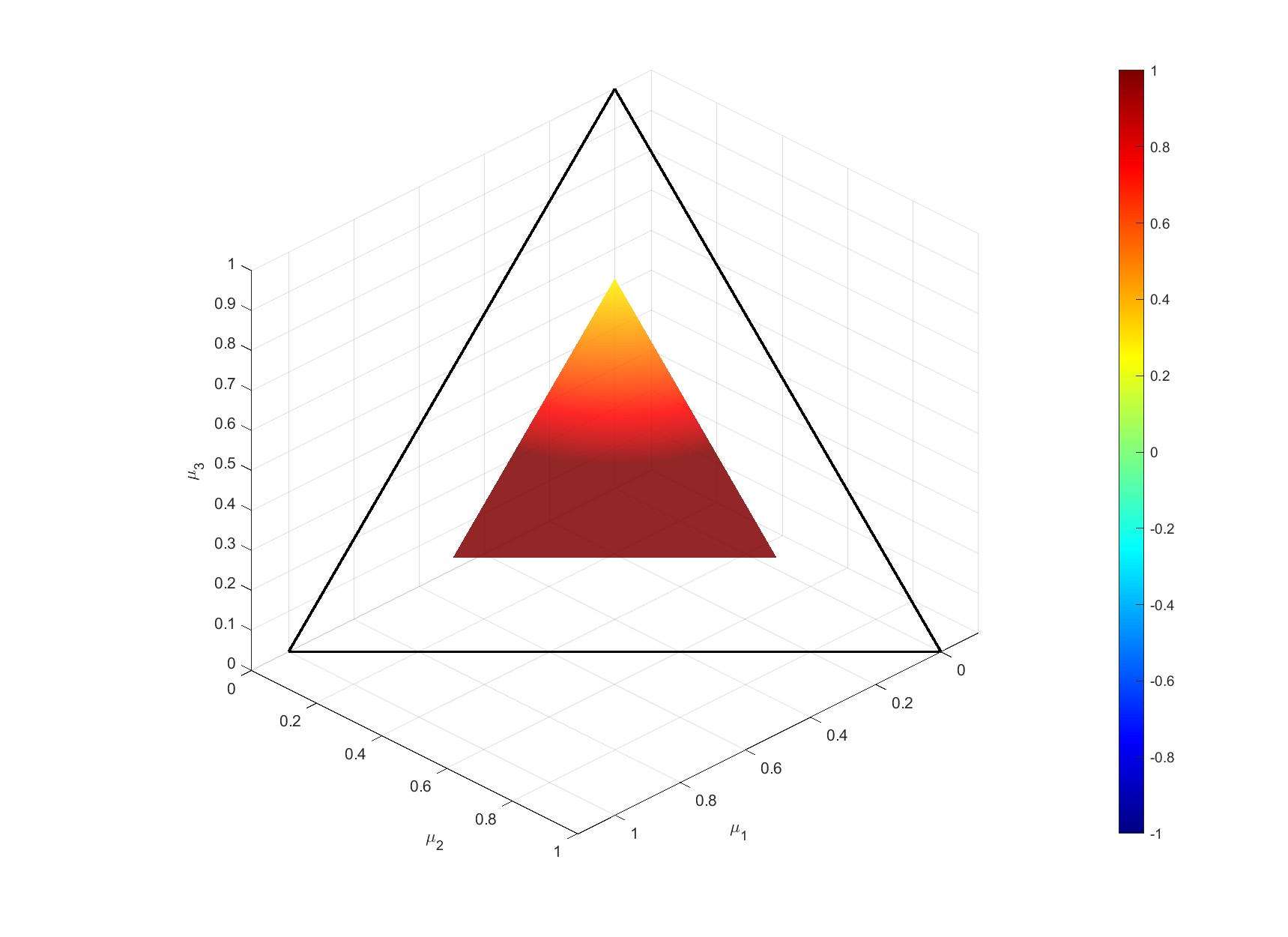}}
		\hspace{0in}
		\subfloat[Mixed phase: $K=S_{0.88}(\Delta^{(3)})$]{
			\includegraphics[width=0.45\linewidth]{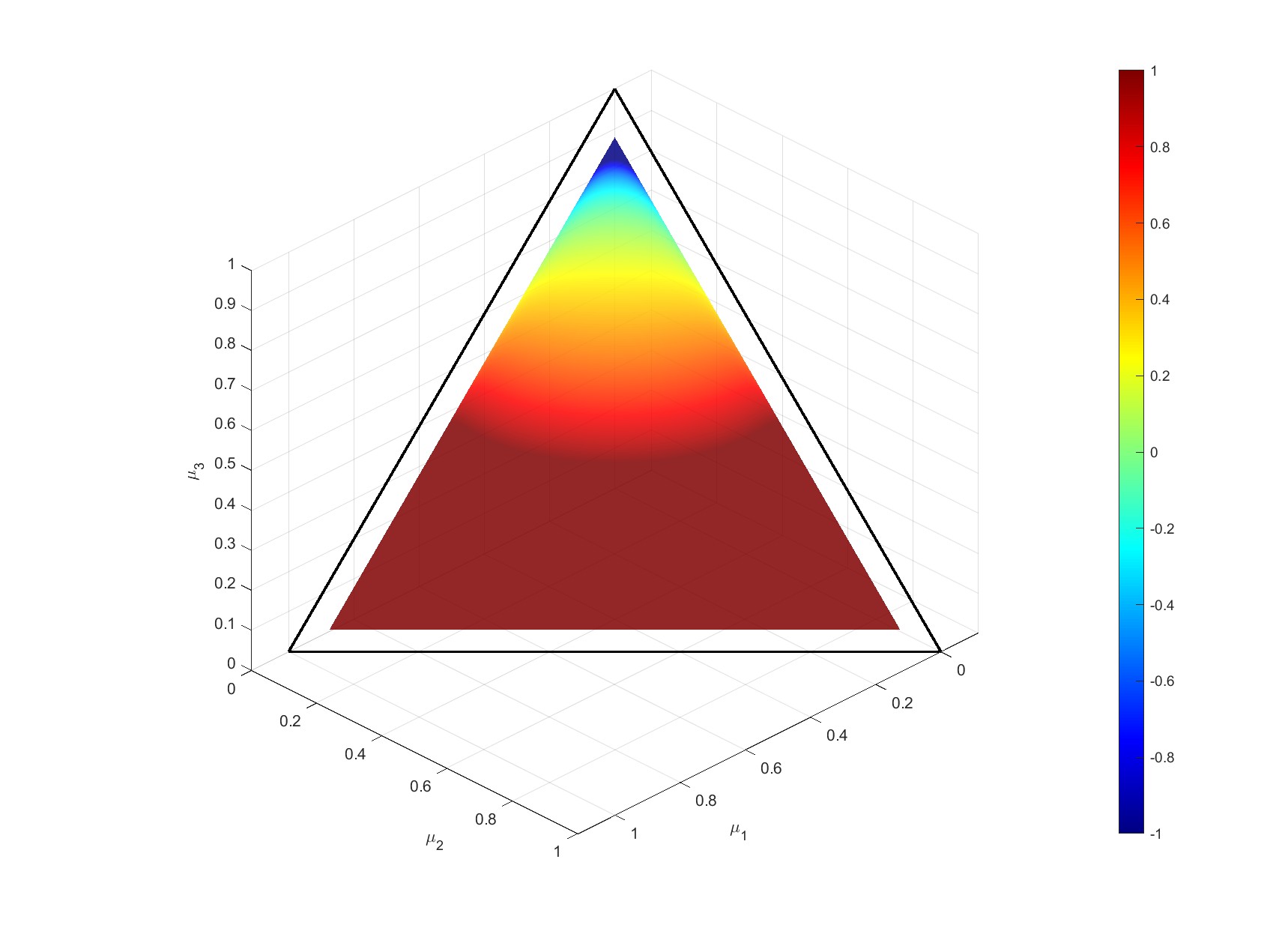}}
		\caption{Heatmap of the normalized third component $f^{\text{SEP}}_3=\pi_3/\mu_3$ for the SEP in $\mathbb{R}^3$ with $\lambda=0.885$. Subfigure (a) illustrates a long-only regime, while (b) shows that short positions emerge near a vertex.}
		\label{fig:f_3_SEP_R3}
	\end{figure}
	
	Subfigure (a) confirms the existence of the all-long regime established in Proposition \ref{prop:SEPAllLongArea}: for a sufficiently small domain ($\varepsilon=0.5$), the strategy remains long-only. Subfigure (b) illustrates the mixed phase ($\varepsilon=0.88$), where short positions emerge. Notably, compared to the SEWP, the SEP exhibits a smoother transition and, crucially, a distinct geometric pattern of short selling.
	
	\subsection{Comparative Analysis of Short-Selling Regions}
	\label{subsec:ComparativeAnalysis}
	A comparison of Figure \ref{fig:f_3_SEWP_R3} (SEWP) and Figure \ref{fig:f_3_SEP_R3} (SEP) shows a fundamental geometric difference in risk exposure. This difference is most apparent in Figure \ref{fig:f_3_SEP_R3_Negative}.
	\begin{itemize}
		\item \emph{SEWP (a ``shell'' pattern):} short-selling positions appear near the boundary of the market state space $K_\varepsilon$.
		\item \emph{SEP (a ``lobe'' pattern):} short-selling positions are concentrated in distinct ``lobes'' near the vertices, while the strategy remains long near the centers of the faces.
	\end{itemize}
	
	\begin{figure}[H]
		\centering
		\subfloat[Short-selling ``shell'' of SEWP]{
			\includegraphics[width=0.45\linewidth]{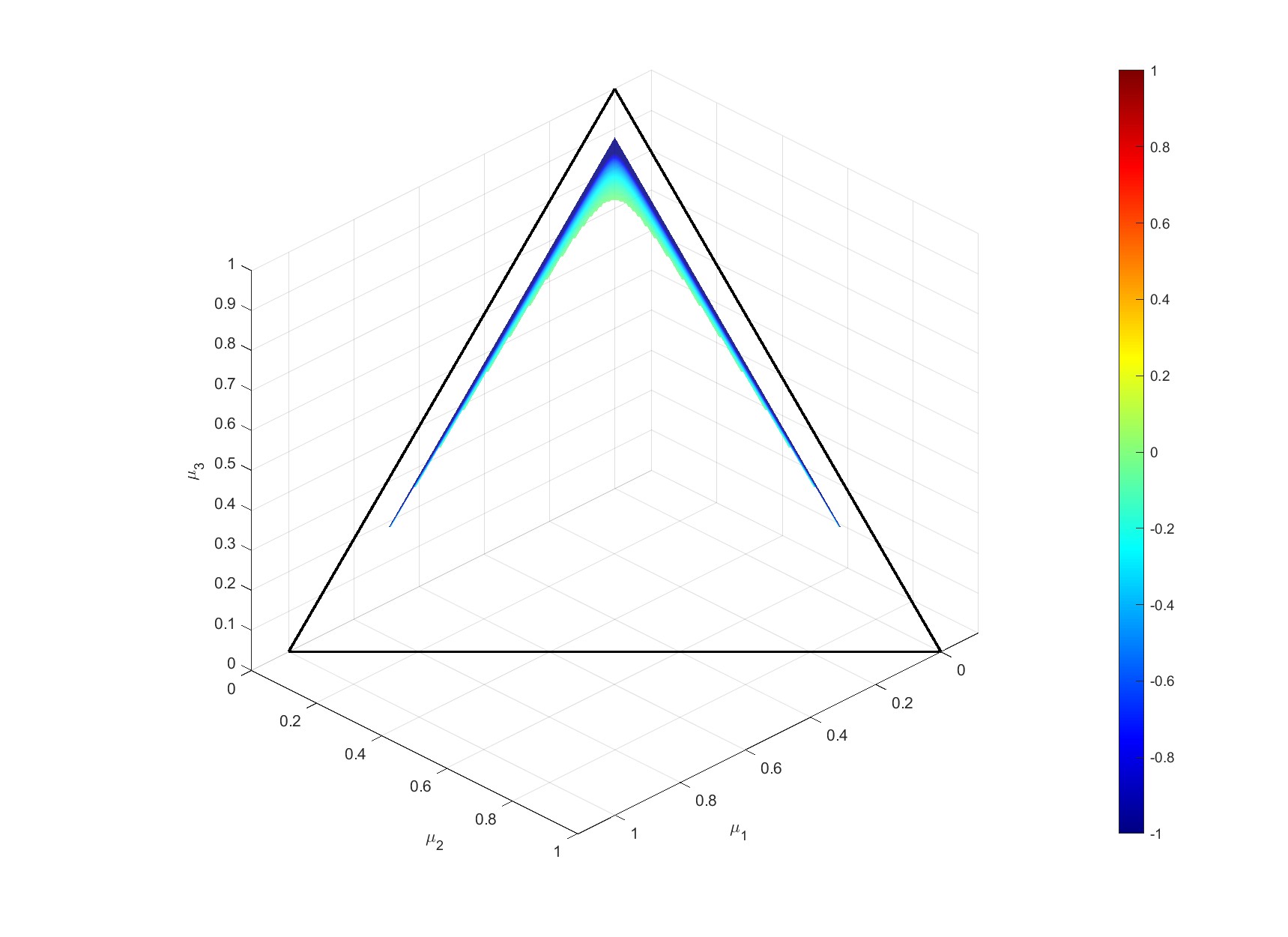}}
		\hspace{0in}
		\subfloat[Short-selling ``lobe'' of SEP]{
			\includegraphics[width=0.45\linewidth]{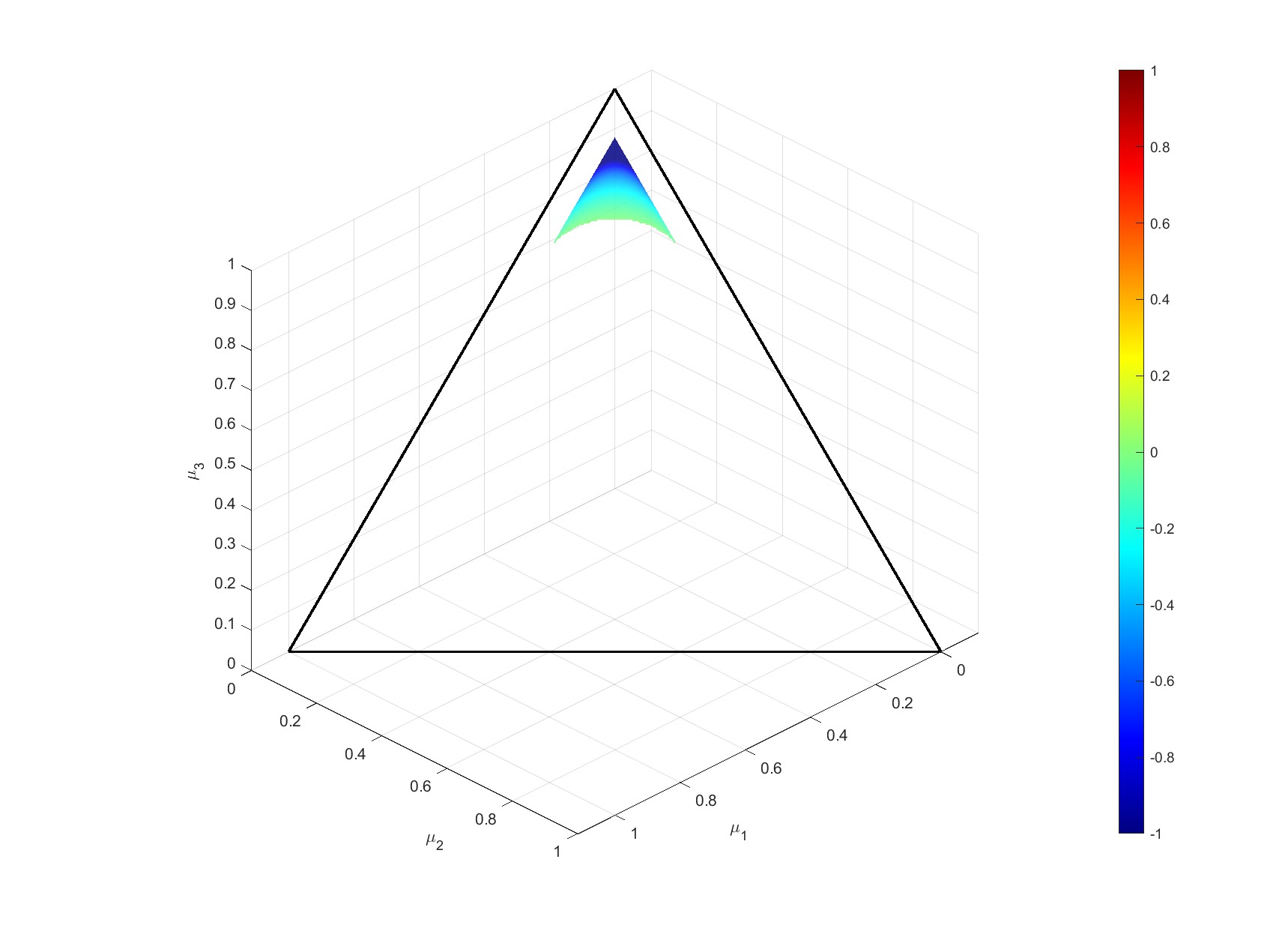}}
		\caption{Short-selling regions in heatmaps of the normalized third component $f^{\text{SEWP}}_3$ and $f^{\text{SEP}}_3$ for the SEWP and SEP in $\mathbb{R}^3$ with $\lambda=0.885$ and $\varepsilon=0.88$. Subfigure (a) illustrates the ``shell'' pattern, while (b) shows the ``lobe'' pattern near a vertex.}
		\label{fig:f_3_SEP_R3_Negative}
	\end{figure}
	
	This difference is theoretically grounded in Proposition \ref{prop:LogGradientDivergence} and Corollary \ref{coro:ShortSellingNearBoundary}. The emergence of short selling is driven by the divergence of the logarithmic directional derivative, equivalently the divergence of the ratio $\nabla \Phi / \Phi = \nabla \log \Phi$ on the region where $\Phi>0$. As stated in Corollary \ref{coro:ShortSellingNearBoundary}, this divergence occurs along rays approaching boundary points $q$ where the generating function vanishes ($\Phi(q)=0$) on $\partial D_\lambda$. Since $K_\varepsilon\subset D_\lambda$ and $\varepsilon$ close to $\lambda$ implies that $K_\varepsilon$ lies close to $\partial D_\lambda$, the location of the zero set of $\Phi$ on $\partial D_\lambda$ determines where short-selling positions are observed within $K_\varepsilon$.
	
	\begin{enumerate}
		\item For the SEWP: The base function is the geometric mean $\Psi_{\text{EWP}}(x) = (\prod x_i)^{1/n}$. This function vanishes on the \emph{entire boundary} $\partial \Delta^{(n)}$ (whenever some $x_i=0$). Because $S_\lambda^{-1}$ maps $\partial D_\lambda=S_\lambda(\partial\Delta^{(n)})$ onto $\partial\Delta^{(n)}$, it follows that $\Phi_{\text{SEWP}}=\Psi_{\text{EWP}}\circ S_\lambda^{-1}$ vanishes on the entire boundary $\partial D_\lambda$. Consequently, the divergence mechanism in Corollary \ref{coro:ShortSellingNearBoundary} can be triggered as the market approaches any portion of $\partial D_\lambda$, which is consistent with the ``shell'' pattern observed in Figure \ref{fig:f_3_SEWP_R3}.
		
		\item For the SEP: The base function is the Shannon entropy $\Psi_{\text{Ent}}(x) = -\sum x_i \log x_i$. Crucially, entropy is strictly positive on the relative interior of each face and vanishes \emph{only at the vertices} (where one $x_i=1$ and all others are $0$). Therefore, the zero set of $\Phi_{\text{SEP}}=\Psi_{\text{Ent}}\circ S_\lambda^{-1}$ on $\partial D_\lambda$ consists exactly of the vertices $\{S_\lambda(e_i)\}_{i=1}^n$. For any boundary point $q\in\partial D_\lambda$ that is not a vertex, one has $\Phi_{\text{SEP}}(q)>0$.
		Along a ray approaching a point in the relative interior of a face of $\partial D_\lambda$, $\Phi_{\text{SEP}}$ converges to a strictly positive limit. In this case the sufficient blow-up mechanism described in Proposition \ref{prop:LogGradientDivergence} does not apply. The strong divergence condition is met \emph{only} when approaching the vertices, which is consistent with the ``lobe'' pattern.
	\end{enumerate}
	
	In financial terms, the SEWP is sensitive to a broad range of boundary-approaching configurations, while the SEP is primarily sensitive to extreme concentration in a single asset. Equivalently, the SEP can remain long-only in markets that are volatile but retain effective diversity across several assets, whereas the SEWP tends to produce short-selling positions under a wider variety of diversity reducing moves toward the boundary. Figure \ref{fig:f_3_SEP_R3_Negative} also indicates that the SEP tends to take larger short positions as the market approaches a near-monopoly configuration, meaning that one asset weight becomes close to one.
	
	\section{Empirical Examples}
	\label{sec:EmpiricalExamples}
	
	\subsection{Data and Methodology}
	We evaluate the empirical performance of the SEWP (Definition \ref{def:SEWP}) and the SEP (Definition \ref{def:SEP}) using daily U.S.\ equity data from the CRSP database. Our sample spans from the first trading day of 2000 (January 3, 2000) to the last trading day of 2021 (December 30, 2021), and the data were downloaded on March 16, 2026. The raw CRSP fields used in the construction are \texttt{date}, \texttt{TICKER}, \texttt{SHROUT}, and \texttt{PRC}.
	
	In this section, market weights are constructed via \eqref{eq:MarketPortfolio}. For each stock $i$ and date $t$, we compute the market capitalization $X_i(t)=\texttt{PRC}_i(t)\cdot \texttt{SHROUT}_i(t)$, and we then normalize across the stocks in the selected submarket to obtain the market weight vector $\mu(t)\in\Delta^{(n)}$ \cite{Wong2023FunctionalPortfolioOptimization}.
	We evaluate performance in terms of the relative value process with the market portfolio as num\'eraire. Let $V_\pi(t)$ denote the ratio of the wealth of a portfolio $\pi$ to that of the market portfolio $\mu$ at time $t$. In discrete time, we compute $V_\pi$ using \eqref{eq:RelativeValue}.
	
	A key empirical input is the market stability parameter $\varepsilon$, which determines the market state space $K_\varepsilon=S_\varepsilon(\Delta^{(n)})$. In real markets $\varepsilon$ is not observed ex ante. For each selected three-stock market, we estimate $\varepsilon$ from the realized path by requiring that all observed market weights lie in $K_\varepsilon$. Concretely, we define the estimate
	\[
	\widehat{\varepsilon}\coloneqq \inf\left\{\varepsilon\in(0,1):\ \mu(t)\in K_\varepsilon\ \text{for all observed }t\right\}.
	\]
	This construction makes $\widehat{\varepsilon}$ a lower bound on the effective diversification level observed in the sample, in the sense that the market weights never leave $K_{\widehat{\varepsilon}}$.
	
	The empirical analysis is conducted in two parts. In the first part, we work with a fixed scaling parameter $\lambda$ in three-stock markets. For each market, we choose $\lambda>\widehat{\varepsilon}$ so that $K_{\widehat{\varepsilon}}\subset D_\lambda$, and we take $\lambda$ close enough to $\widehat{\varepsilon}$ to place the observed market trajectory near the zero boundary $\partial D_\lambda$ where Corollary \ref{coro:ShortSellingNearBoundary} predicts the emergence of short-selling positions.
	We visualize (i) the portfolio weights of the SEWP and SEP (denoted by $\pi^\text{SEWP}$ and $\pi^\text{SEP}$, respectively) as three-dimensional weight surfaces over the market simplex and (ii) the realized trajectory $\{\mu(t)\}$ relative to the corresponding short-selling regions.
	We also compare these results with long-only, unleveraged benchmark portfolios, namely the equal-weighted portfolio (EWP) and the entropy-weighted portfolio (EP), denoted by $\pi^\text{EWP}$ and $\pi^\text{EP}$, respectively. These benchmarks correspond to the unshrunken case $\lambda=1$, in which the SEWP and SEP reduce to the EWP and EP, respectively.
	In the second part, we fix a three-stock market and change the scaling parameter $\lambda$ to show the influence of $\lambda$ on relative values and on the prevalence of short-selling positions along the realized path.
	
	\subsection{Results and Discussion}
	By plotting the 3D portfolio weight surfaces, we aim to show that the SEWP and SEP behave differently according to how the realized market weight trajectory approaches the zero boundary $\partial D_\lambda$. This empirical design directly reflects the theoretical mechanism in Proposition \ref{prop:LogGradientDivergence} and Corollary \ref{coro:ShortSellingNearBoundary}, which link short selling to the behavior of $\nabla\Phi/\Phi$ near boundary points where $\Phi$ vanishes.
	
	Market 1 (Marathon Oil Corp (MRO), Allstate Corp (ALL), and Rockwell Automation Inc (ROK)).
	For this market, the estimated market stability parameter is $\widehat{\varepsilon}=0.84609$. We set $\lambda=0.885$, so that $K_{\widehat{\varepsilon}}\subset D_\lambda$ and the observed trajectory lies close to $\partial D_\lambda$.
	Empirically, the market weights leave the long-only core identified in Proposition \ref{prop:EWPAllLongArea} and fluctuate near the relative interiors of the edges. In this regime, the SEWP exhibits short-selling positions across a broad neighborhood of the boundary, which is consistent with the fact that $\Phi_{\mathrm{SEWP}}$ vanishes on the entire $\partial D_\lambda$. In contrast, the SEP remains predominantly long in edge-center configurations, which is consistent with the fact that $\Phi_{\mathrm{SEP}}$ does not vanish on the relative interior of any face and vanishes only at the vertices of $\partial D_\lambda$.
	
	Market 2 (Marathon Oil Corp (MRO), Allstate Corp (ALL), and Devon Energy Corp New (DVN)).
	After a comprehensive analysis of Market 1, we replace ROK with DVN to form a new market. For this market, the estimated market stability parameter is $\widehat{\varepsilon}=0.79159$, and we set $\lambda=0.843$.
	The smaller $\lambda$ places the zero boundary $\partial D_\lambda$ closer to the barycenter, which strengthens boundary effects for a given level of dispersion in the market weights. In this market, the realized trajectory exhibits more pronounced concentration episodes near the vertices. In accordance with the ``lobe'' geometry implied by the vertex-only zero set of $\Phi_{\mathrm{SEP}}$, the SEP produces larger short-selling positions primarily during near-monopoly configurations, while the SEWP continues to show short-selling positions more broadly as the trajectory approaches $\partial D_\lambda$.
	
	Our results are presented as follows.
	We first compare the behavior of portfolio weights under different three-stock markets, using the 3D weight figures to show the difference between the SEWP and SEP. After that, comparisons between short-selling strategies and long-only portfolios in each three-stock market are conducted with the help of the relative value process $V_\pi$. Finally, we fix a three-stock market to show the influence of the scaling parameter $\lambda$ on relative values.
	
	\subsubsection{Three-Stock Markets: MRO, ALL and ROK}
	\paragraph{Market Dynamics and the Long-Only Core.}
	Figure \ref{fig:TimeSeriesMarket10} depicts the evolution of market weights. The sample period covers distinct regimes of market dominance, with MRO, ALL, and ROK sequentially holding the largest market shares.
	Geometrically, Figure \ref{fig:TimeSeriesMarket10}(b) projects this trajectory onto the simplex. The light green area represents the theoretical long-only core $S_{\varepsilon^*}(\overline{\Delta^{(3)}})$ derived in Proposition \ref{prop:EWPAllLongArea}.
	Consistent with our theoretical framework, the market trajectory frequently exits this core during periods of increased asset concentration (e.g., the dominance of ALL in 2001). These excursions bring the market closer to the boundary $\partial D_\lambda$ and trigger the short-selling mechanism of the admissible strategies.
	
	\begin{figure}[H]
		\centering
		\subfloat[Time series of market weights]{
			\includegraphics[width=0.45\linewidth]{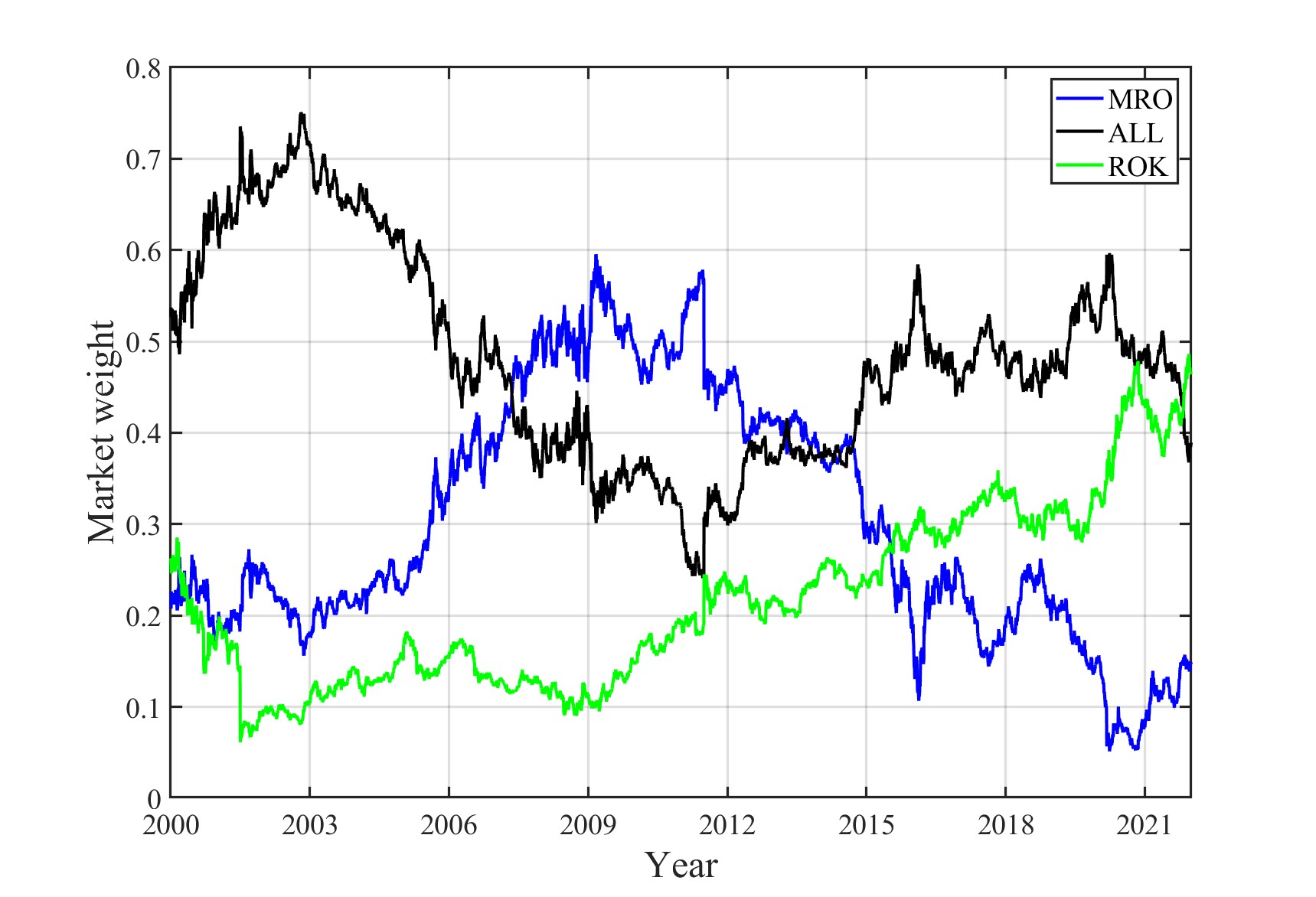}}
		\hspace{0in}
		\subfloat[Trajectory in the simplex]{
			\includegraphics[width=0.45\linewidth]{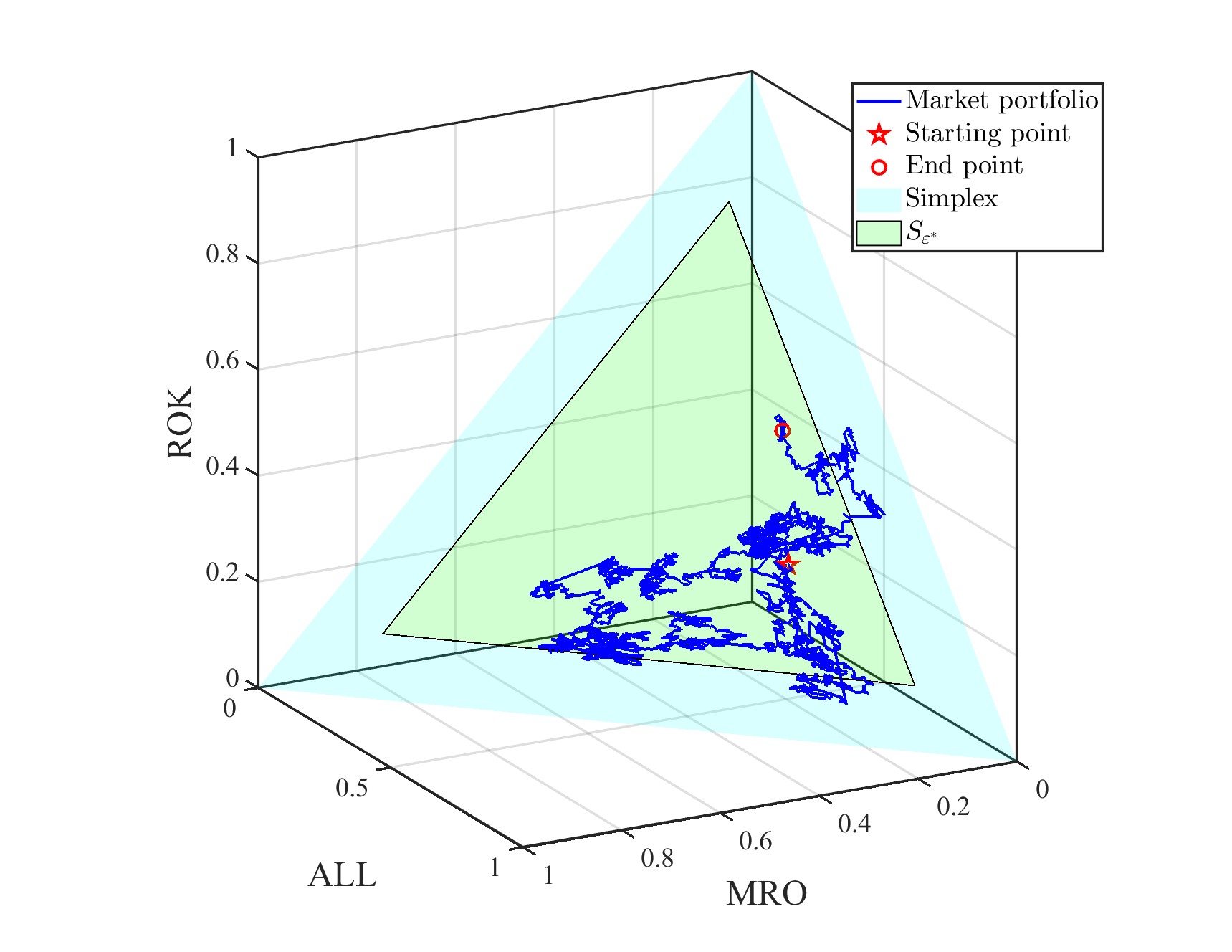}}
		\caption{Evolution of market weights (2000--2021). The green region in (b) denotes the theoretical long-only core $S_{\varepsilon^*}(\overline{\Delta^{(3)}})$ for the SEWP.}
		\label{fig:TimeSeriesMarket10}
	\end{figure}
	
	\paragraph{SEWP: Aggressive Response to Concentration.}
	Figure \ref{fig:TimeSeriesSEWP10} illustrates the behavior of the SEWP.
	Consistent with the qualitative regimes described in Section~\ref{subsubsec:SEWP}, the strategy reacts strongly to market concentration. As shown in Figure \ref{fig:TimeSeriesSEWP10}(a), when the market weight of an asset becomes sufficiently low (e.g., MRO in 2020), the strategy typically increases its exposure to that asset, and it finances this tilt by taking short positions in one or more relatively high-weight assets. Short selling can involve multiple stocks simultaneously; for instance, from October 9, 2020, to October 21, 2020, the weights for both ALL and ROK were negative.
	This is further visualized in Figure \ref{fig:TimeSeriesSEWP10}(b), where the portfolio trajectory (blue line) frequently extends outside the simplex $\Delta^{(3)}$ into the region of $H\setminus\overline{\Delta^{(3)}}$.
	Although the maximum market weight $\max_{1\le i\le 3}\mu_i$ remained below the sufficient threshold $r_1^*=0.8$ defined in Proposition \ref{prop:ShortCondition}, short selling occurred frequently. This confirms that $r_1^*$ is a sufficient but not necessary condition. 
	
	\begin{figure}[H]
		\centering
		\subfloat[SEWP weights time series]{
			\includegraphics[width=0.45\linewidth]{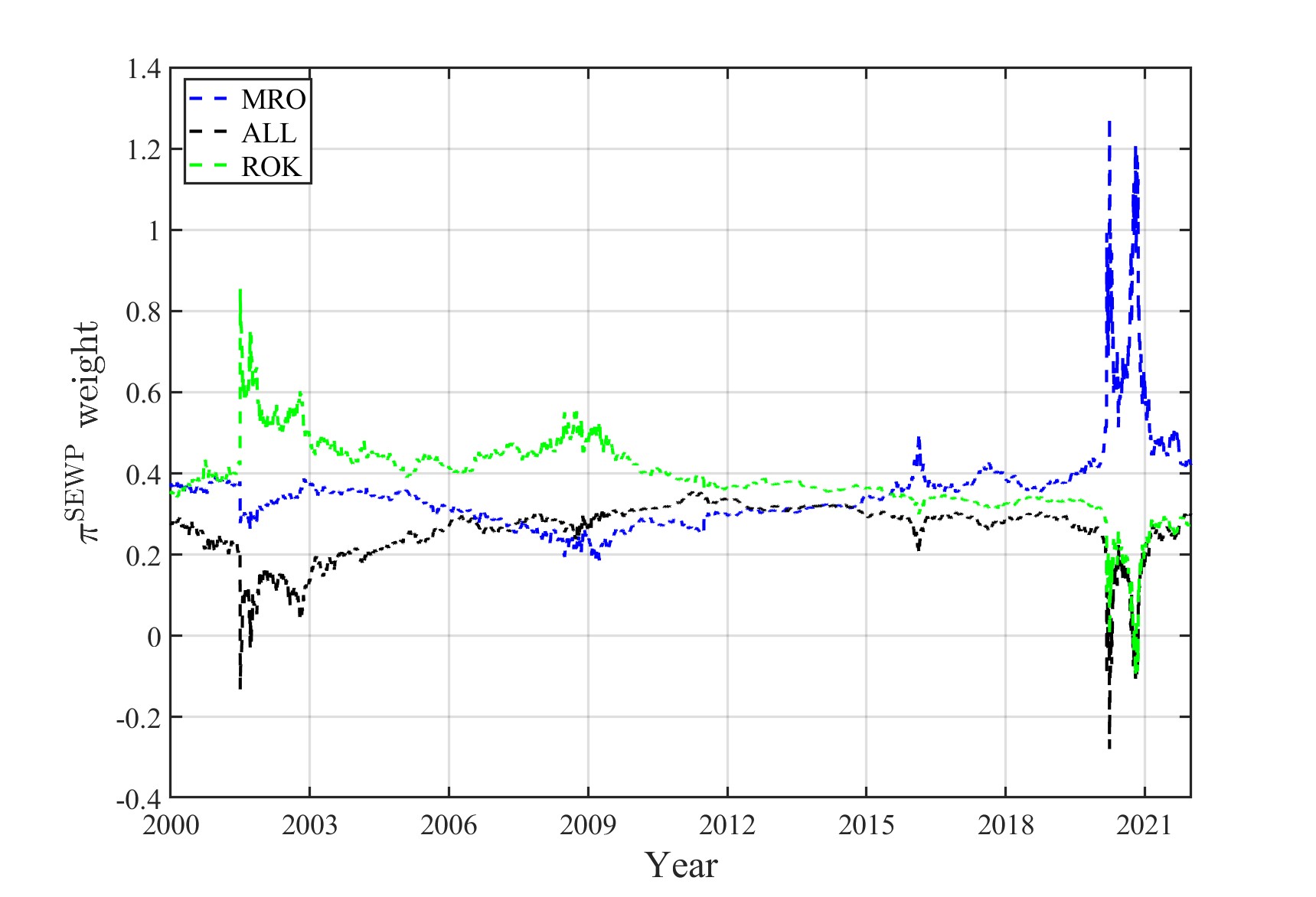}}
		\hspace{0in}
		\subfloat[SEWP trajectory in hyperplane]{
			\includegraphics[width=0.45\linewidth]{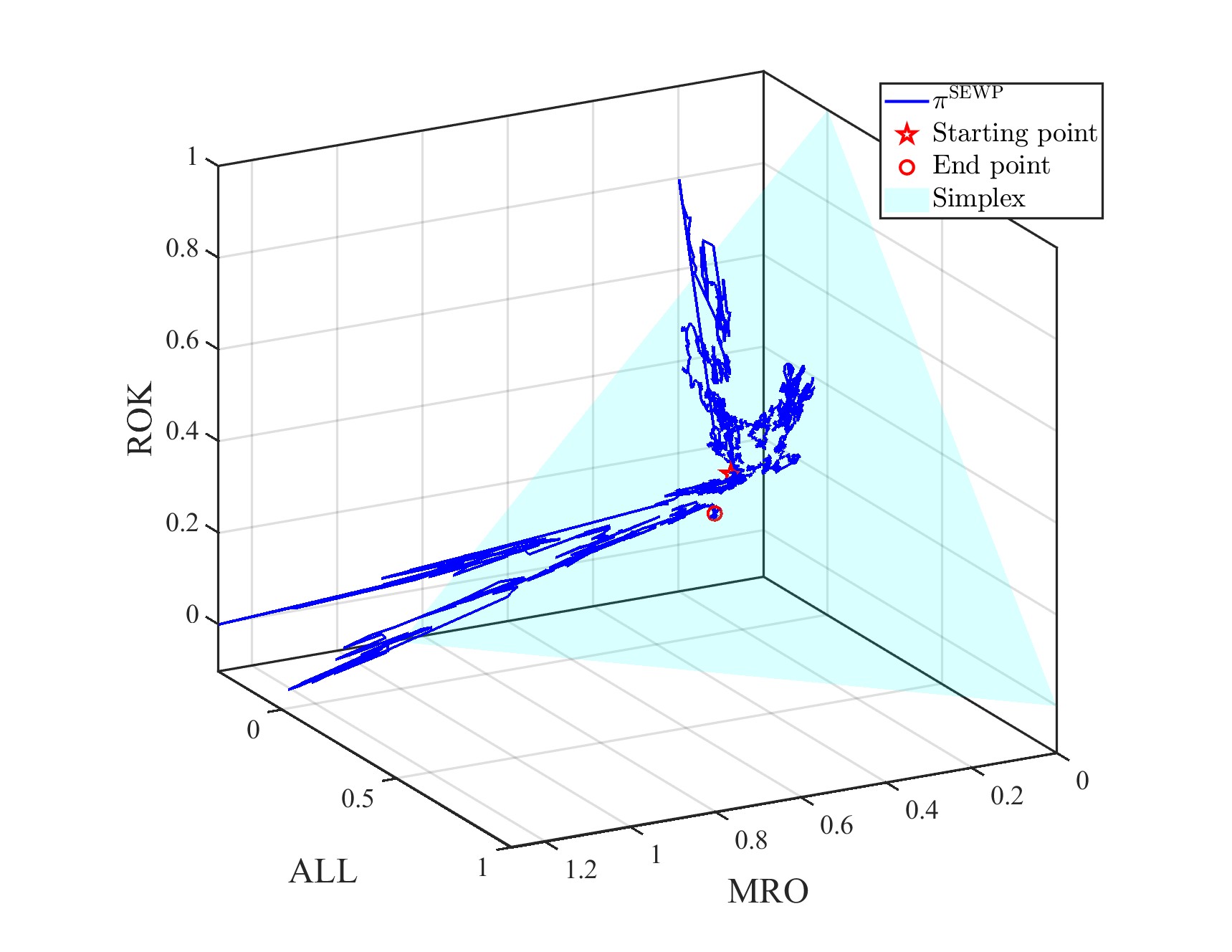}}
		\caption{SEWP performance. The strategy frequently leverages (weights $>1$) and takes short positions (weights $<0$), with the trajectory exiting the standard simplex.}
		\label{fig:TimeSeriesSEWP10}
	\end{figure}
	
	\paragraph{SEP: Conservative Stability.}
	In contrast, the SEP (Figure \ref{fig:TimeSeriesSEP10}) remains long-only throughout the entire period.
	This behavior aligns with the geometric asymptotic analysis in Section \ref{subsubsec:SEP}. Unlike the SEWP, for which the potential function $\Phi_{\mathrm{SEWP}}$ vanishes on the entire boundary, the entropy potential $\Phi_{\mathrm{SEP}}$ vanishes only at the vertices. The realized market trajectory did not approach the vertices closely enough to enter the lobe-like short-selling regions shown in Figure \ref{fig:f_3_SEP_R3_Negative}. Consequently, the SEP remained within the long-only regime.
	Figure \ref{fig:TimeSeriesSEP10}(b) confirms that the SEP trajectory stays tightly clustered within the simplex, demonstrating that short selling was not triggered for this strategy under the observed market conditions.
	
	\begin{figure}[H]
		\centering
		\subfloat[SEP weights time series]{
			\includegraphics[width=0.45\linewidth]{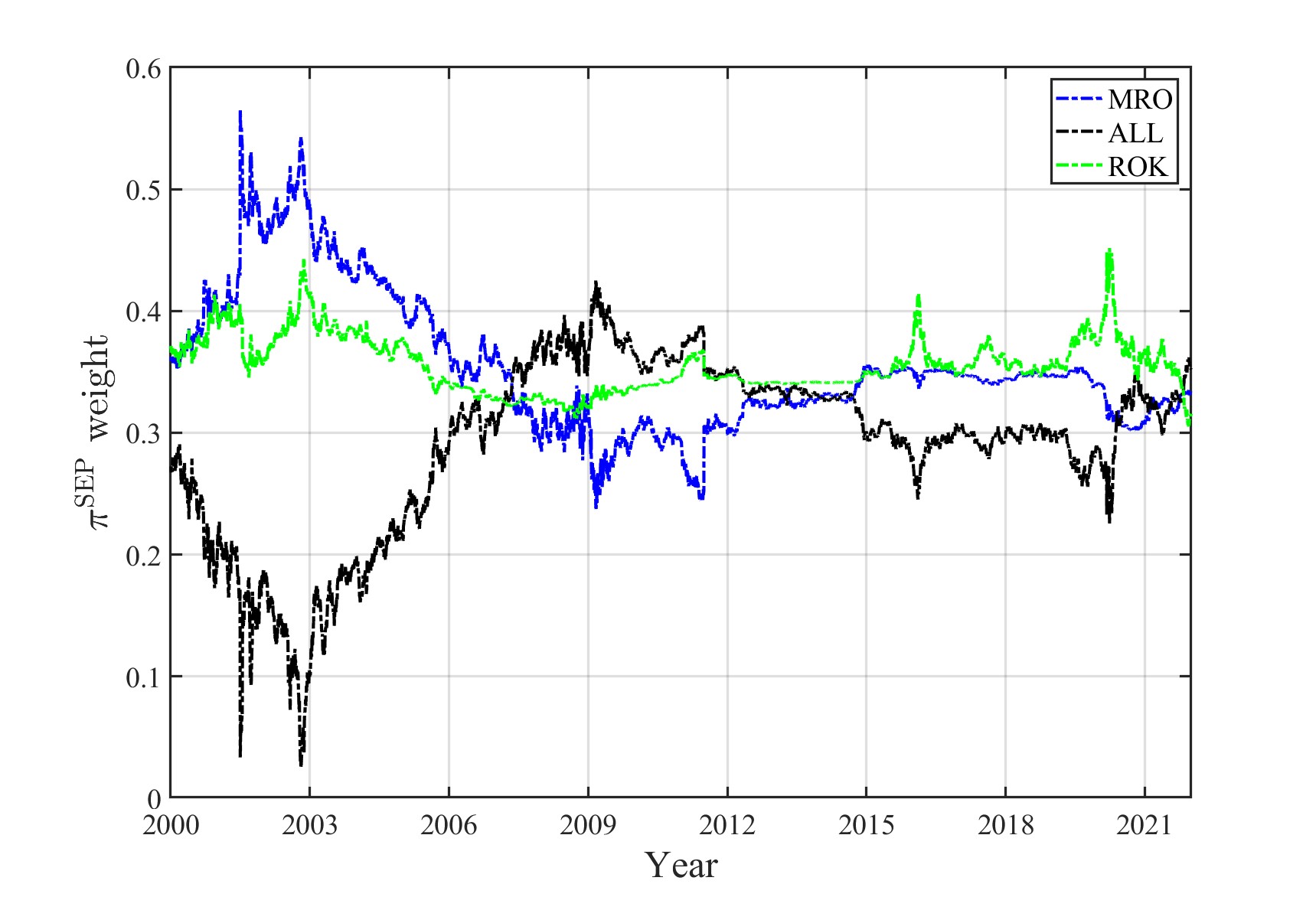}}
		\hspace{0in}
		\subfloat[SEP trajectory in hyperplane]{
			\includegraphics[width=0.45\linewidth]{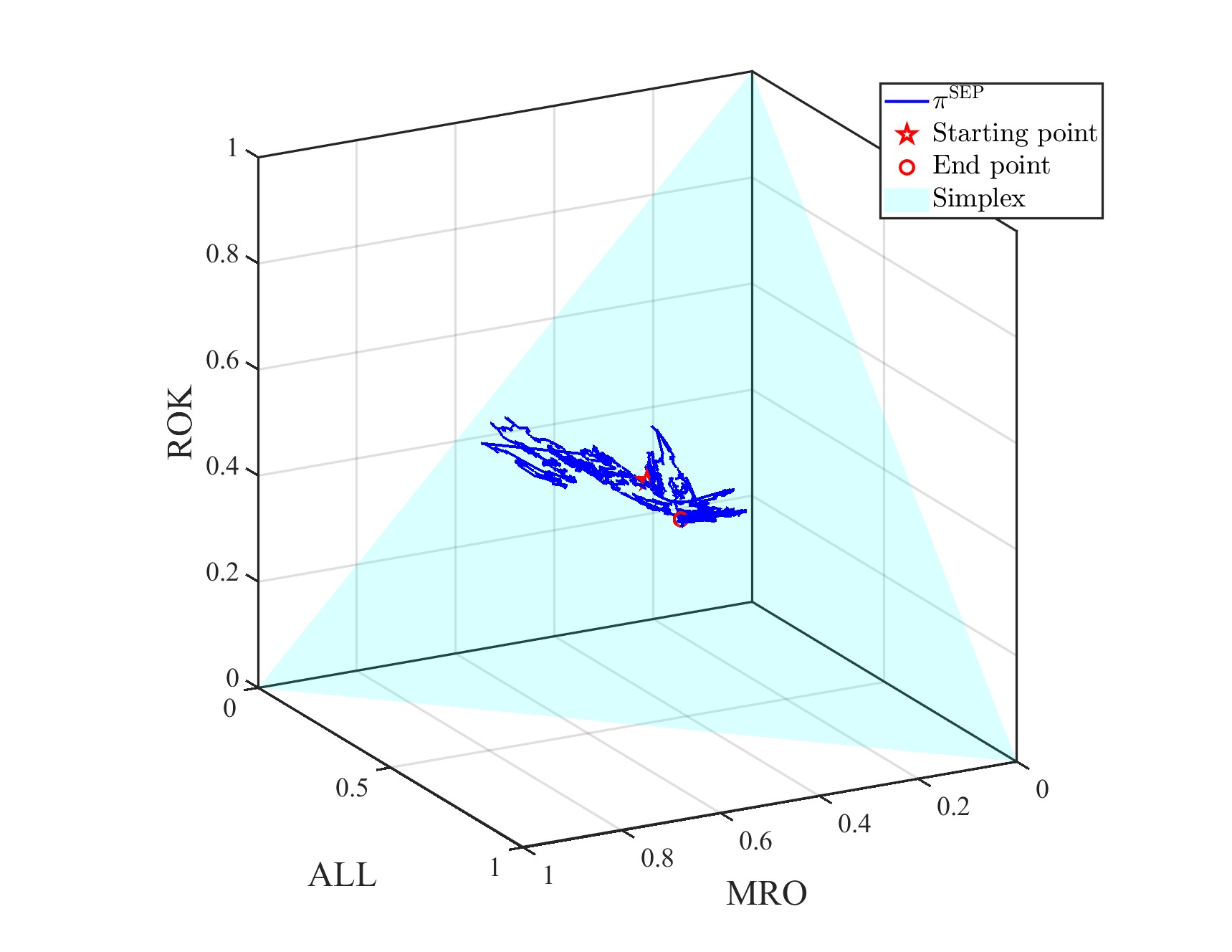}}
		\caption{SEP performance. The strategy remains long-only and contained within the simplex, reflecting the slower divergence of the gradient for the entropy-based potential away from the vertices.}
		\label{fig:TimeSeriesSEP10}
	\end{figure}

	\subsubsection{Three-Stock Markets: MRO, ALL and DVN}
	Compared with ROK, DVN has a higher market weight for most trading days. This change tends to keep the three-stock market weight trajectory closer to the interior of the simplex, rather than near an edge.
	However, since 2018, the market capitalizations of DVN and MRO declined substantially relative to ALL. As a result, by around 2020 the market weights became highly concentrated near the simplex vertex corresponding to ALL, which indicates a near-monopoly configuration.
	
	\paragraph{Market Dynamics and the Long-Only Core.}
	Figure \ref{fig:TimeSeriesMarket} depicts the evolution of market weights of MRO, ALL and DVN. Compared to the market of MRO, ALL, and ROK, it is clear from Figure \ref{fig:TimeSeriesMarket}(b) that the trajectory is mainly near the center of the simplex and away from the boundary for most of the sample. Starting from 2020, the market becomes concentrated near the vertex corresponding to ALL, which is consistent with the discussion above.
	As discussed in Proposition \ref{prop:ShortCondition}, such a near-monopoly configuration can cause the SEWP to take a short-selling position in the dominant stock (here, ALL).
	
	\begin{figure}[H]
		\centering
		\subfloat[Time series of market weights]{
			\includegraphics[width=0.45\linewidth]{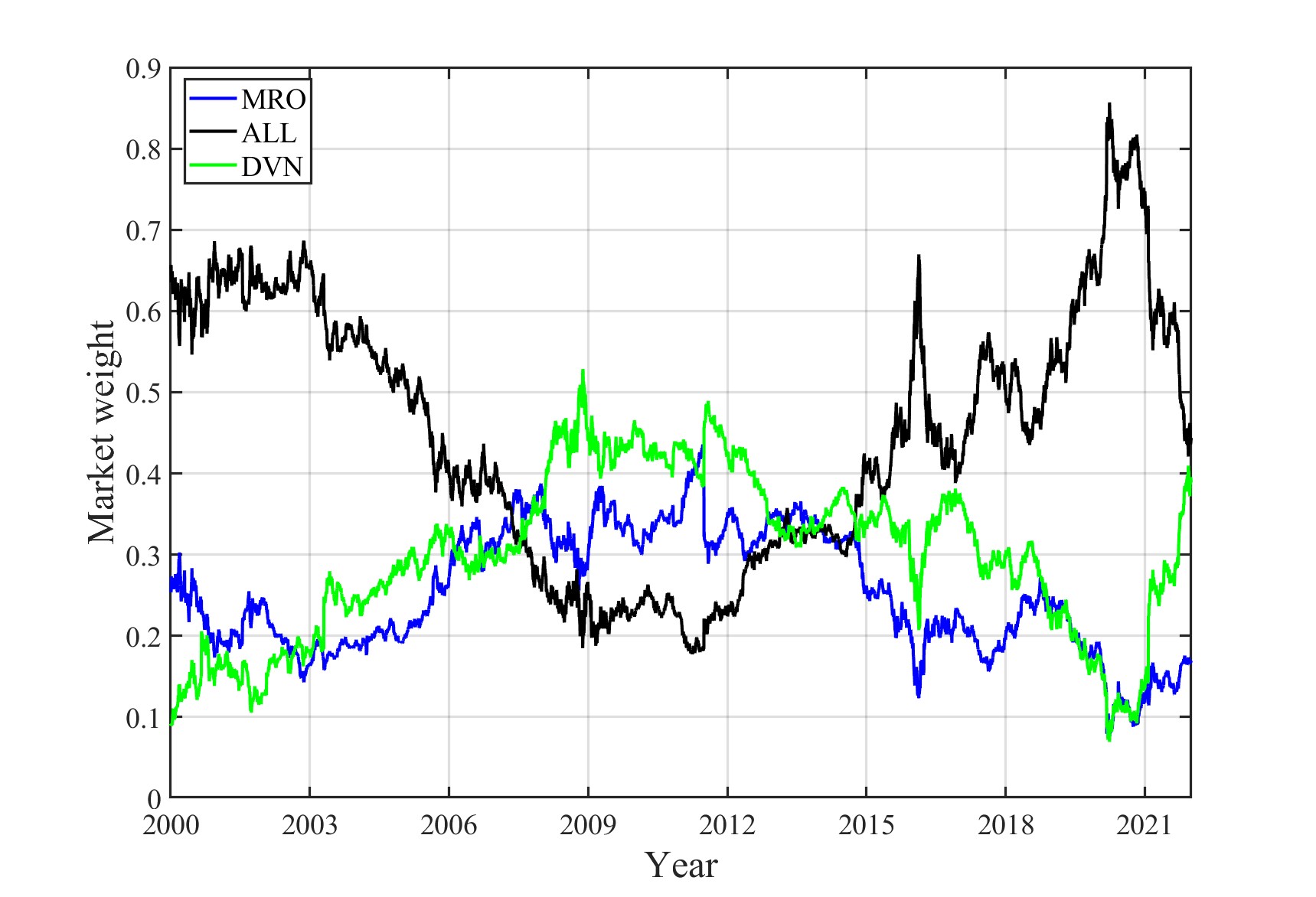}}
		\hspace{0in}
		\subfloat[Trajectory in the simplex]{
			\includegraphics[width=0.45\linewidth]{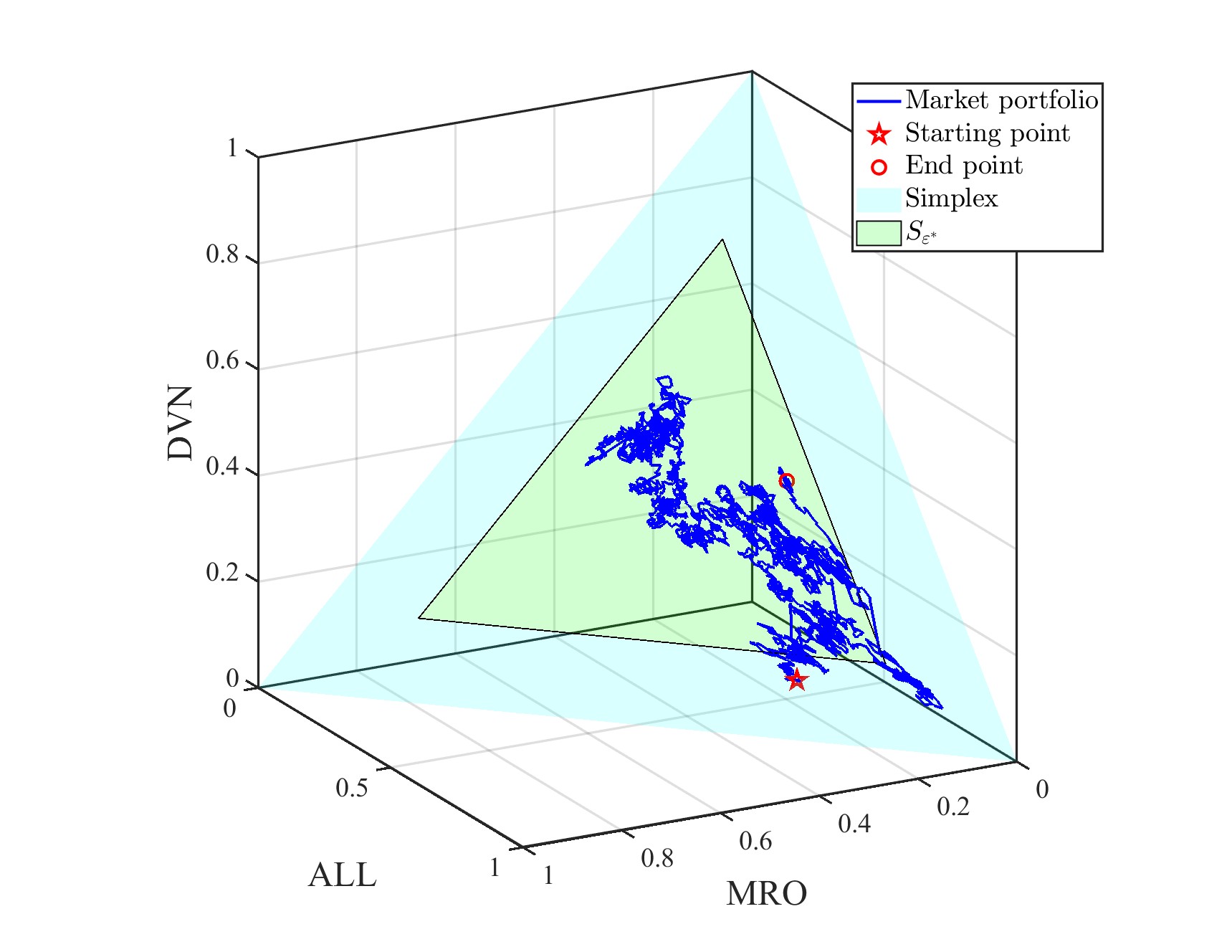}}
		\caption{Evolution of market weights (2000--2021). The green region in (b) denotes the theoretical long-only core $S_{\varepsilon^*}(\overline{\Delta^{(3)}})$ for the SEWP.}
		\label{fig:TimeSeriesMarket}
	\end{figure}
	
	\paragraph{SEWP: Aggressive Response to Concentration.}
	Figure \ref{fig:TimeSeriesSEWP} illustrates the behavior of the SEWP.
	Consistent with the sufficient regimes described in Proposition \ref{prop:ShortCondition}, the strategy reacts strongly to market concentration around a vertex in the sense that short selling can be triggered when $\max_{1\le i\le 3}\mu_i\ge r_1^*=0.74$. As shown in Figure \ref{fig:TimeSeriesSEWP}(a), when the market weight of a single asset becomes dominant (e.g., ALL between March 2020 and December 2020), the strategy takes substantial short-selling positions in the corresponding asset.
	Outside these near-monopoly episodes, the market trajectory lies mostly inside the long-only core $S_{\varepsilon^*}(\overline{\Delta^{(3)}})$, and the SEWP rarely takes short-selling positions.
	
	\begin{figure}[H]
		\centering
		\subfloat[SEWP weights time series]{
			\includegraphics[width=0.45\linewidth]{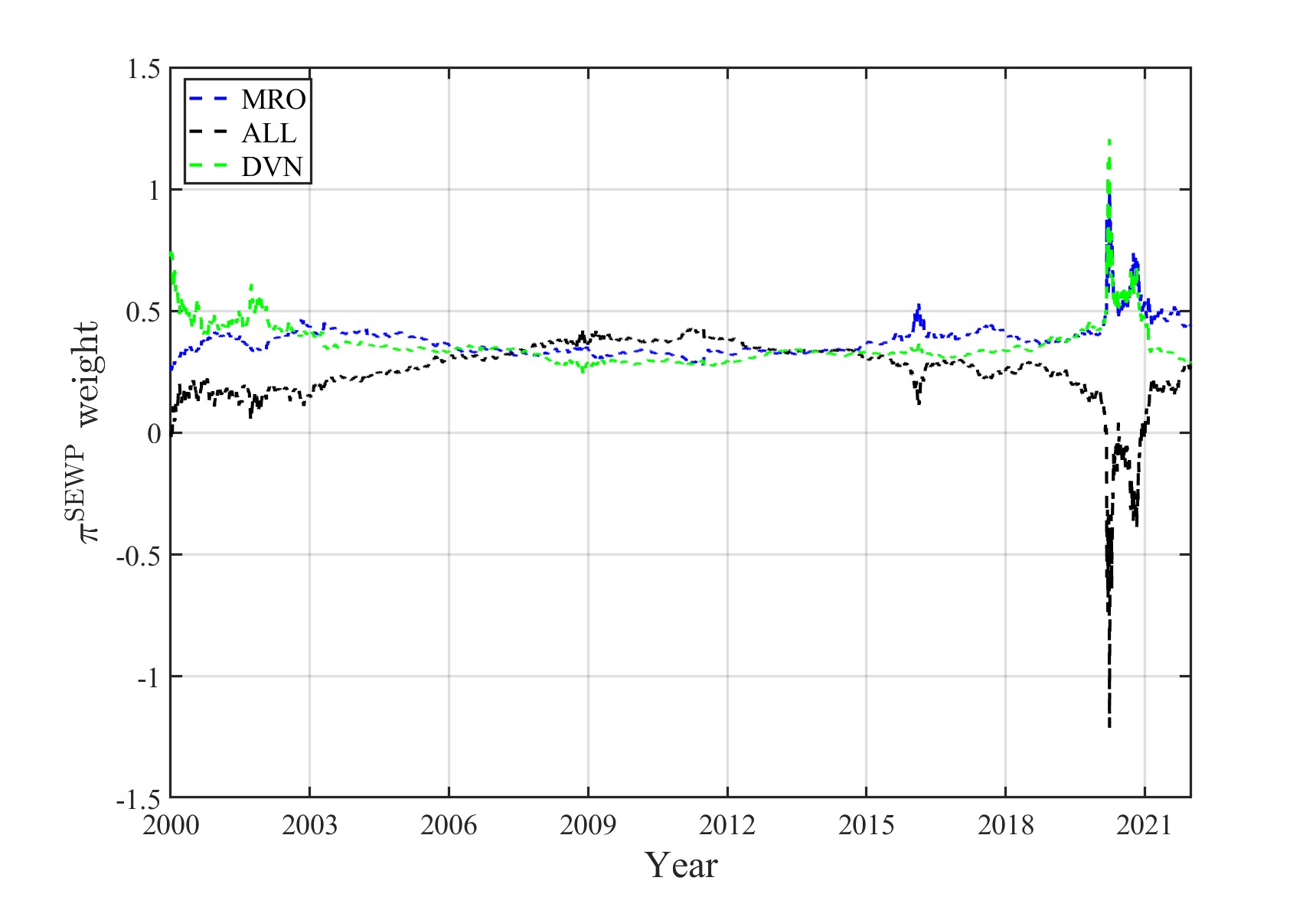}}
		\hspace{0in}
		\subfloat[SEWP trajectory in hyperplane]{
			\includegraphics[width=0.45\linewidth]{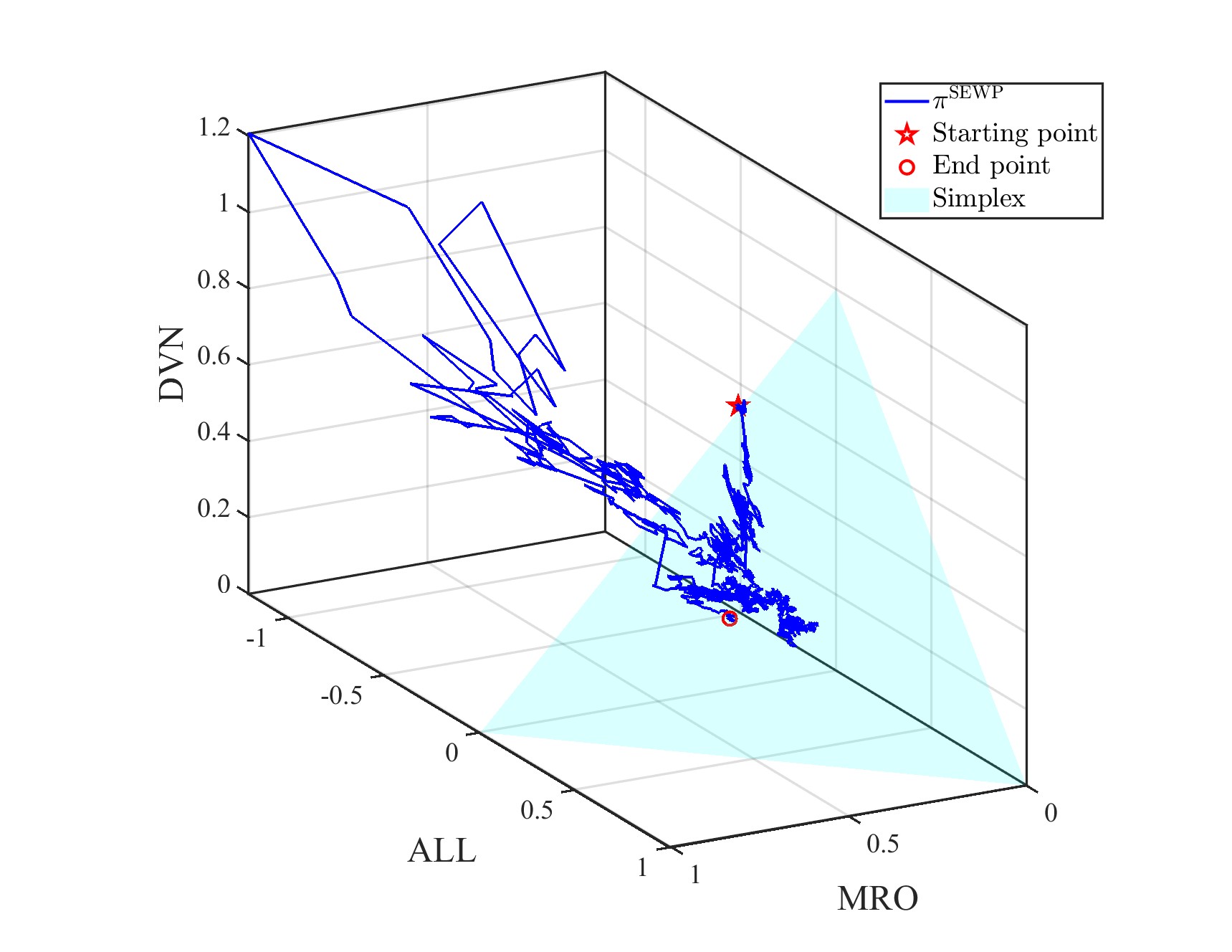}}
		\caption{SEWP performance. The strategy frequently leverages (weights $>1$) and takes short positions (weights $<0$), with the trajectory exiting the standard simplex.}
		\label{fig:TimeSeriesSEWP}
	\end{figure}
	
	\paragraph{SEP: Conservative Stability.}
	The geometric asymptotic analysis in Subsection \ref{subsubsec:SEP} implies that the SEP is primarily sensitive to vertex-approaching configurations, because its generating function vanishes only at the vertices of $\partial D_\lambda$. In contrast, the SEP is comparatively insensitive to trajectories approaching the relative interior of an edge.
	This is consistent with the heatmap in Figure \ref{fig:f_3_SEP_R3_Negative}.
	As shown in Figure \ref{fig:TimeSeriesSEP}(a), for most of the sample the SEP weights remain stable and stay within the long-only region.
	However, when the market trajectory approaches a vertex closely enough to enter the lobe-like short-selling region, the SEP takes a substantial short-selling position. Under the observed trajectory, this short position can be larger than that of the SEWP.
	
	\begin{figure}[H]
		\centering
		\subfloat[SEP weights time series]{
			\includegraphics[width=0.45\linewidth]{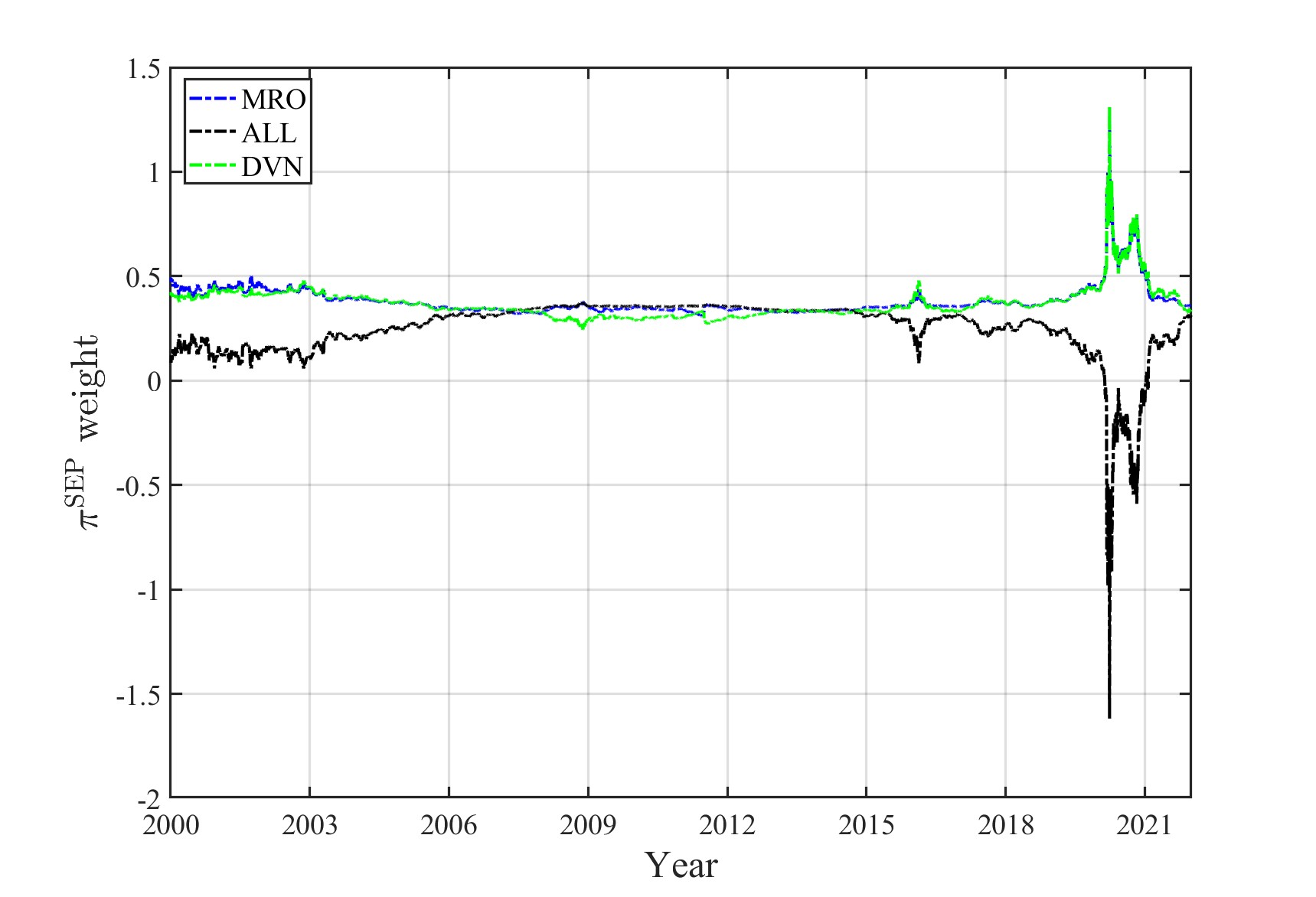}}
		\hspace{0in}
		\subfloat[SEP trajectory in hyperplane]{
			\includegraphics[width=0.45\linewidth]{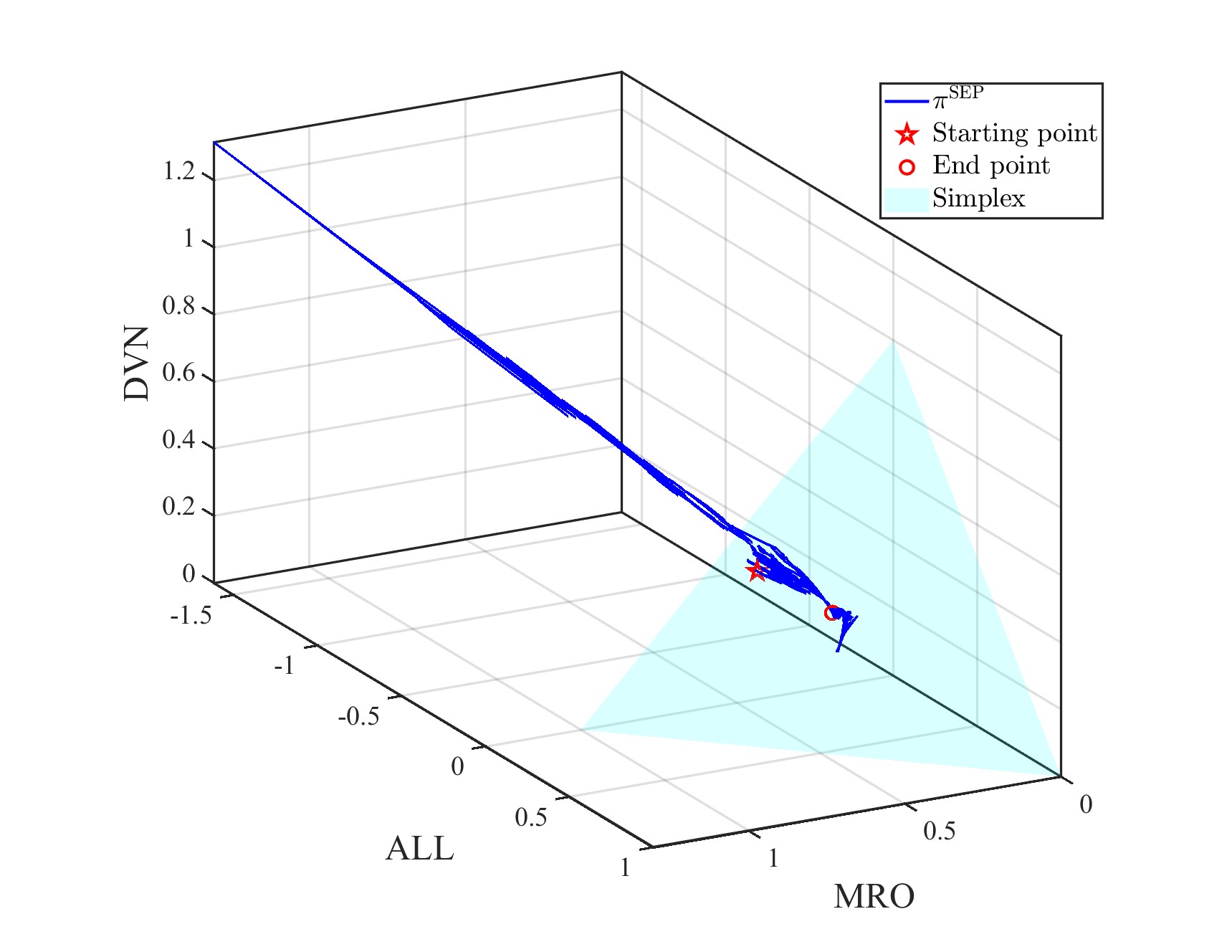}}
		\caption{SEP performance. The strategy is no longer long-only during near-vertex episodes and holds substantial short positions.}
		\label{fig:TimeSeriesSEP}
	\end{figure}
	
	\subsubsection{Relative Value with Fixed Scaling Parameter}
	\label{subsubsec:RelativeValueFixed}
	\paragraph{Relative Value and the Trade-off between Risk and Return.}
	Figure \ref{fig:RV_SEWP_SEP} compares the relative values of the shrunken portfolios (SEWP and SEP) against their classical counterparts (EWP and EP, respectively) in the two three-stock markets.
	
	The results indicate that the barycentric scaling transformation can amplify growth but also increases fluctuations in the relative value.
	\begin{itemize}
		\item \emph{Performance:} Both shrunken portfolios outperformed their baselines. The SEWP achieved the highest terminal relative values in both markets, significantly outpacing the SEP. This reflects its more aggressive use of leverage during persistent trends.
		For the market composed of MRO, ALL, and DVN, the empirical pattern reveals the mechanism through which functionally generated portfolios can achieve relative value growth. When a dominant stock continues to rise and the market weight approaches the boundary (e.g., ALL rose sharply from March to April 2020), the relative value of the functionally generated portfolio incurs significant losses. Conversely, when the dominant stock declines and the market weight moves back toward the interior (e.g., ALL fell from February to March 2020), the strategy begins to realize gains. This observation is consistent with the boundary-driven mechanism described in Proposition \ref{prop:LogGradientDivergence}.
		\item \emph{Risk:} The excess return of shrunken portfolios came at the cost of deeper drawdowns. During the consecutive trading days of March 6, 2020, and March 9, 2020, our trading strategies experienced significant drawdowns in both markets. For the market composed of MRO, ALL, and ROK, the SEWP suffered a loss of 19\% during this period, the SEP incurred a loss of 10\%, whereas the EWP recorded a loss of only 10\%, and the EP experienced a loss of merely 6\%.
		In general, the shrunken portfolios exhibited maximum drawdowns approximately double those of the long-only baselines.
	\end{itemize}

	\begin{figure}[H]
		\centering
		\subfloat[SEWP vs.\ EWP in MRO, ALL and ROK]{
			\includegraphics[width=0.45\linewidth]{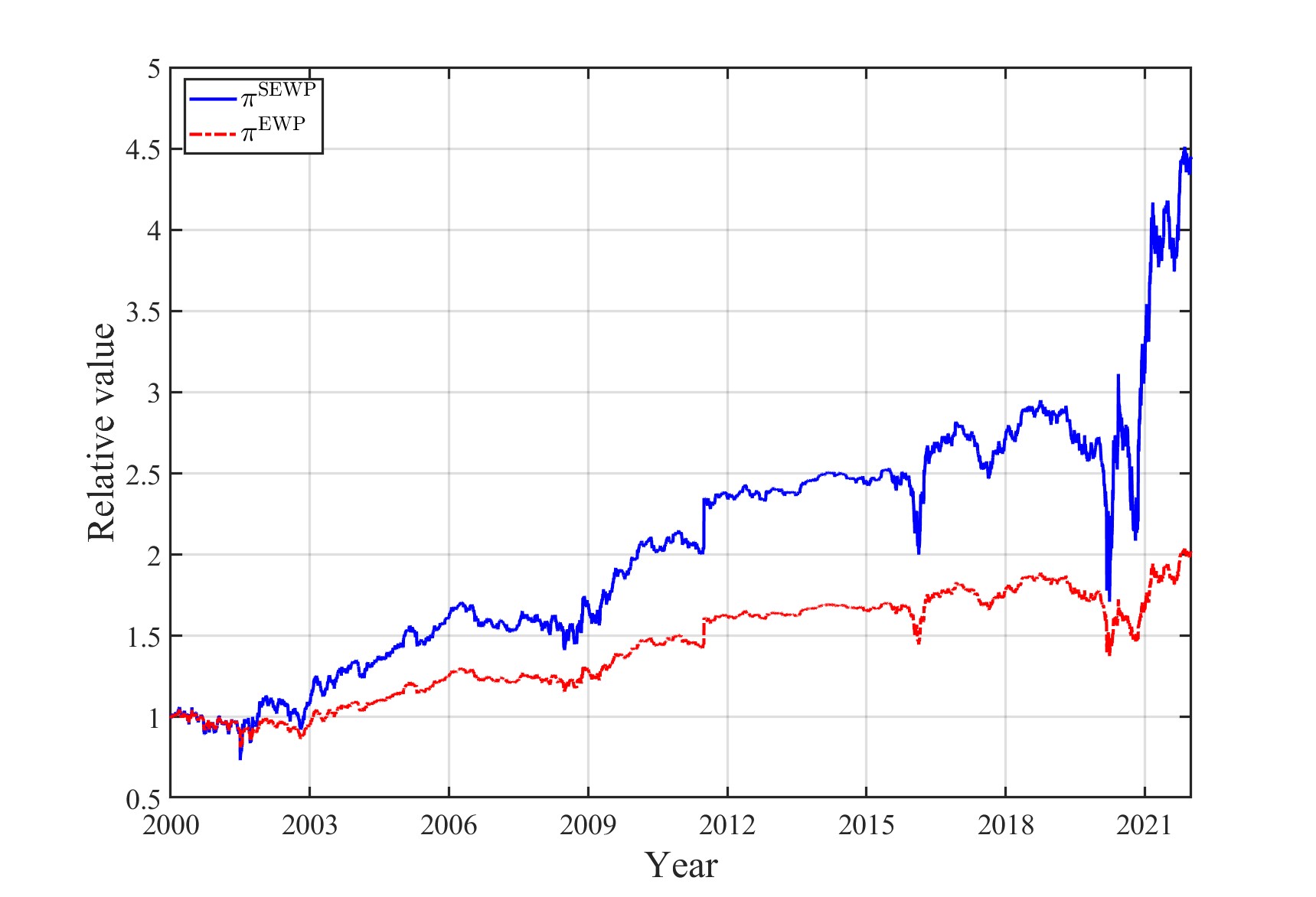}}
		\hspace{0in}
		\subfloat[SEP vs.\ EP in MRO, ALL and ROK]{
			\includegraphics[width=0.45\linewidth]{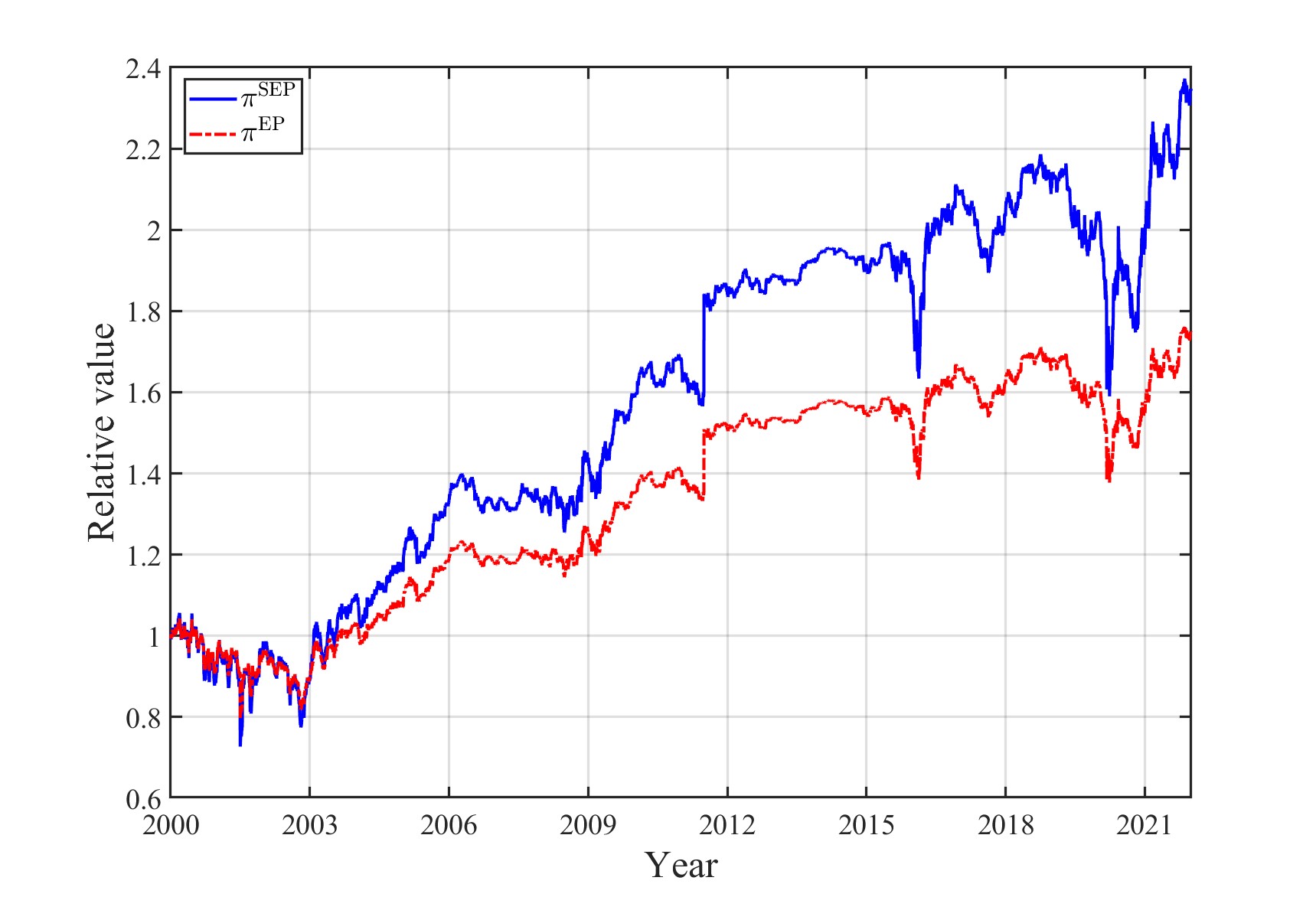}}
		\hspace{0in}
		\subfloat[SEWP vs.\ EWP in MRO, ALL and DVN]{
			\includegraphics[width=0.45\linewidth]{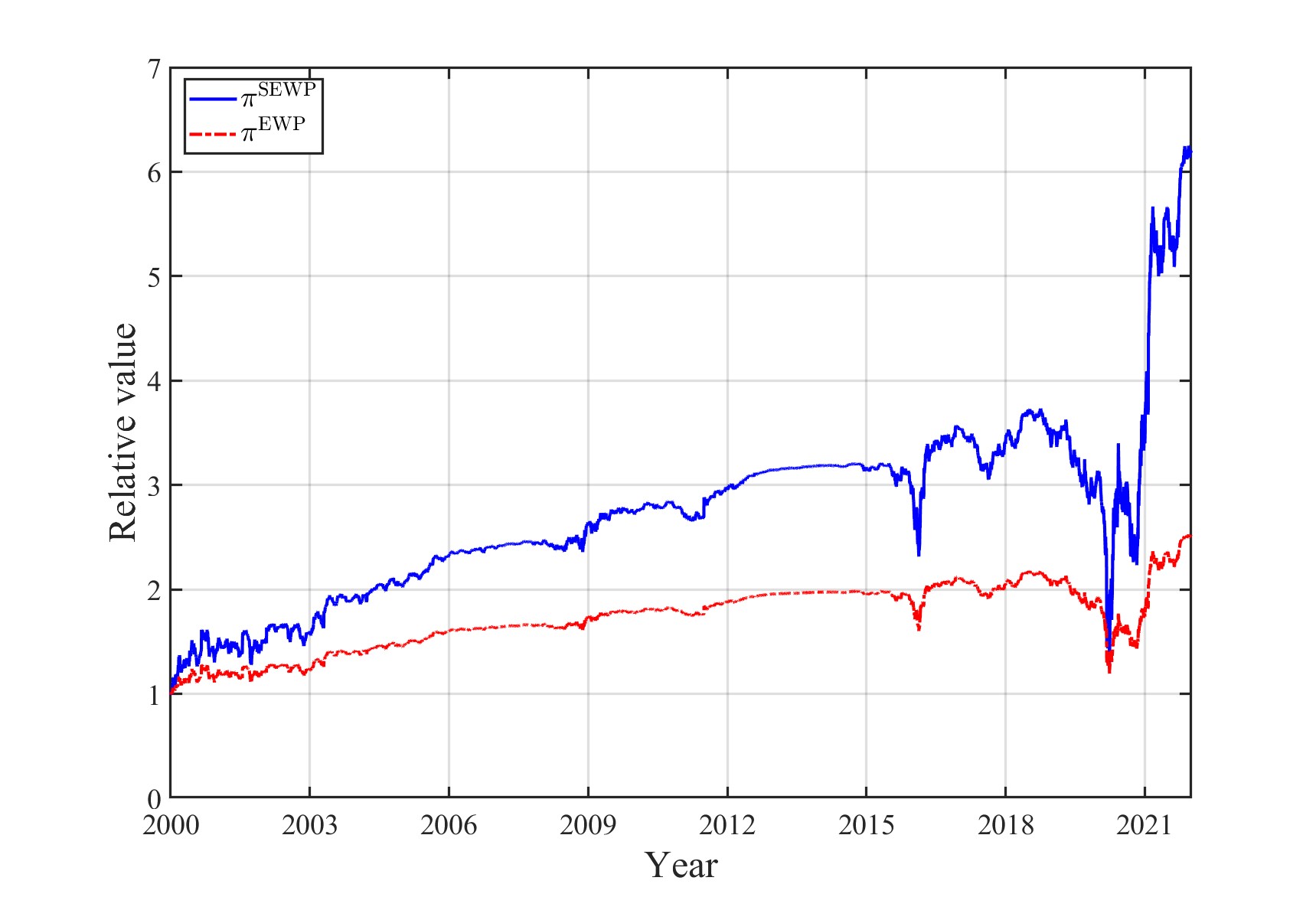}}
		\hspace{0in}
		\subfloat[SEP vs.\ EP in MRO, ALL and DVN]{
			\includegraphics[width=0.45\linewidth]{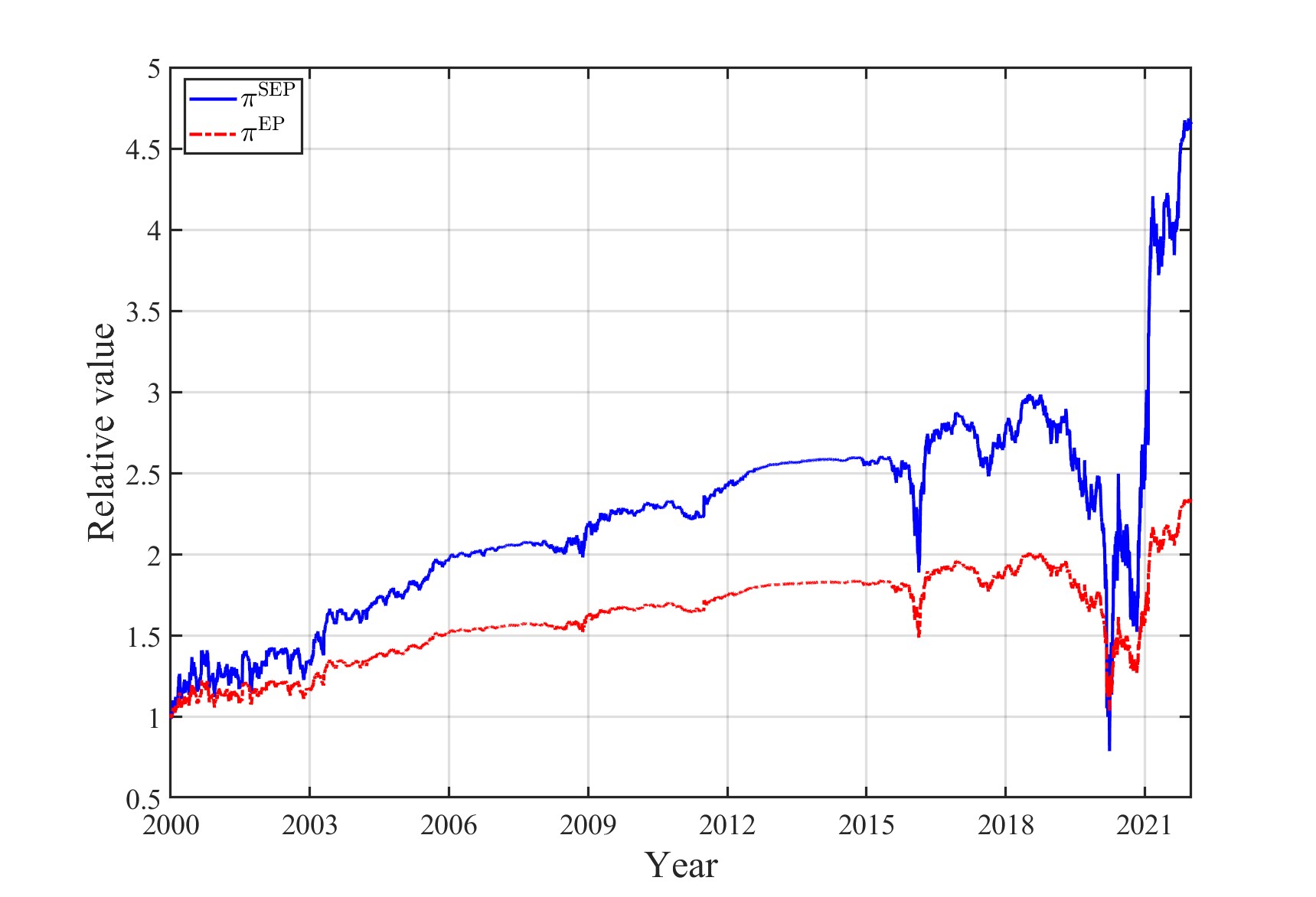}}
		\caption{Relative value trajectories in three-stock markets. Solid lines represent the admissible strategies; dashed lines represent the classical baselines.}
		\label{fig:RV_SEWP_SEP}
	\end{figure}
	
	In summary, the empirical evidence is consistent with our theoretical characterization: the SEWP is more sensitive to boundary-approaching market configurations and can deliver higher relative growth at the cost of larger drawdowns, whereas the SEP provides a more conservative alternative in volatile but effectively diversified markets.
	
	\subsubsection{Relative Value with Various Scaling Parameters}
	We now fix the three-stock market of MRO, ALL and DVN and vary the scaling parameter $\lambda\in\{0.842,0.896,0.95\}$ to illustrate how $\lambda$ controls the degree of short selling and the resulting relative performance. Figure \ref{fig:TimeSeriesDynamicLambda} visualizes the trajectories of the SEWP and SEP respectively.
	
	\begin{figure}[H]
		\centering
		\subfloat[SEWP trajectory in hyperplane]{
			\includegraphics[width=0.45\linewidth]{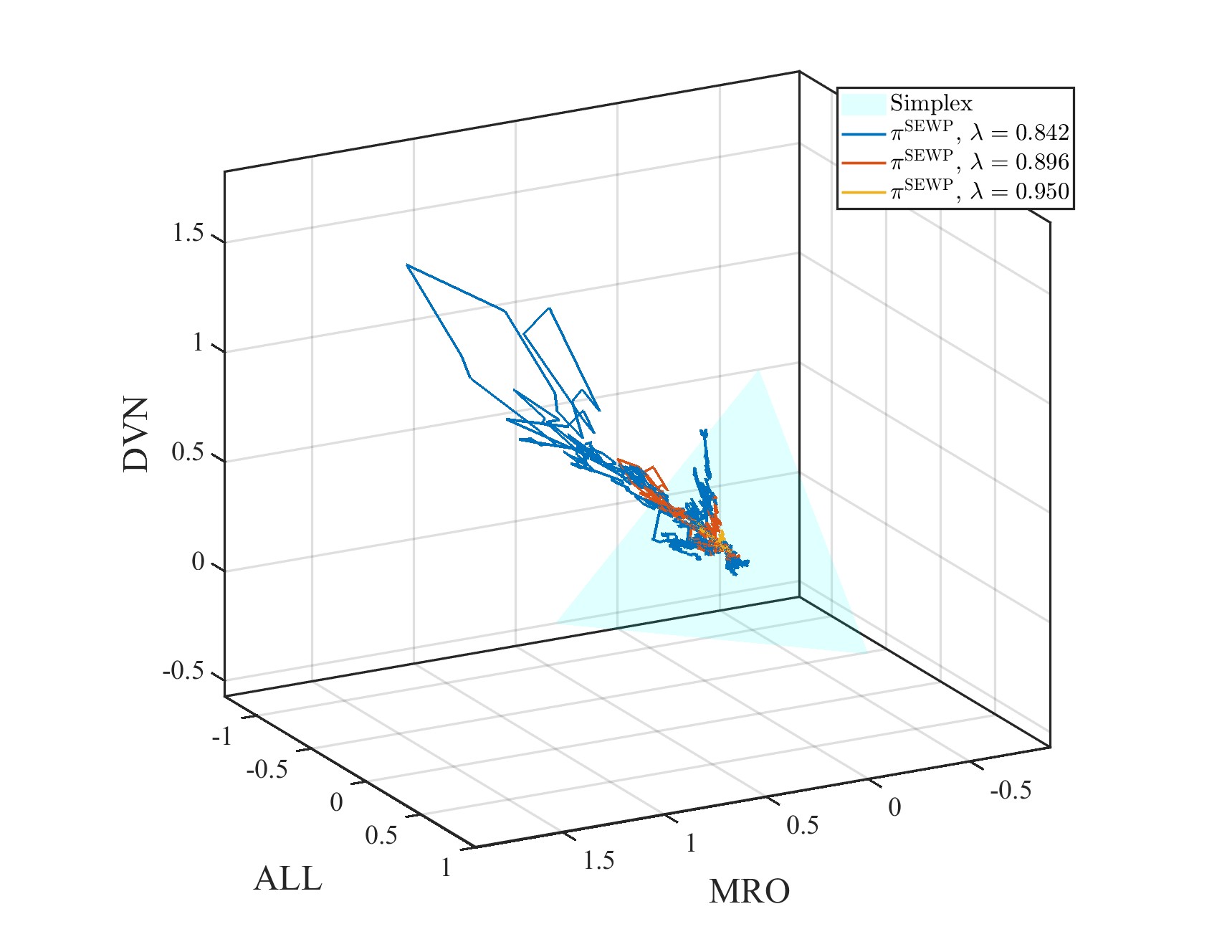}}
		\hspace{0in}
		\subfloat[SEP trajectory in hyperplane]{
			\includegraphics[width=0.45\linewidth]{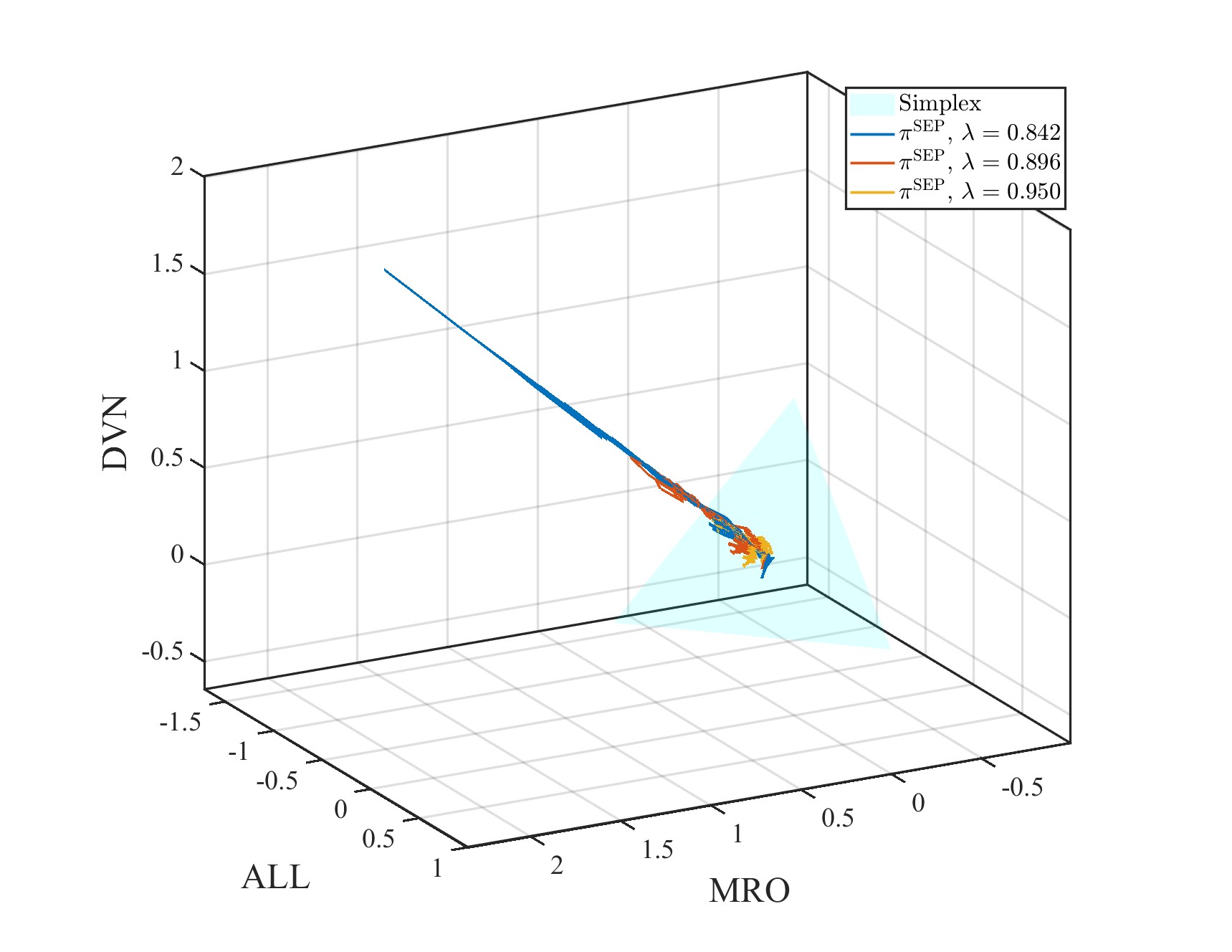}}
		\caption{Shrunken portfolio trajectories in the hyperplane with various scaling parameters $\lambda$.}
		\label{fig:TimeSeriesDynamicLambda}
	\end{figure}
	
	Figure \ref{fig:TimeSeriesDynamicLambda} shows directly that the scaling parameter $\lambda$ controls the degree of short selling. As $\lambda$ increases toward 1, the shrunken portfolios degenerate toward the long-only case because $S_\lambda(\Delta^{(3)})$ expands toward $\Delta^{(3)}$. Conversely, as $\lambda$ decreases, the generating domain $D_\lambda=S_\lambda(\Delta^{(3)})$ contracts toward the barycenter, which moves the zero boundary $\partial D_\lambda$ closer to the realized market trajectory and induces more short-selling positions in accordance with Corollary \ref{coro:ShortSellingNearBoundary}.
	
	The relative value trajectories in Figure \ref{fig:RVDynamicLambda} reflect this trade-off. On the one hand, the terminal values in both panels show that decreasing $\lambda$ amplifies relative returns. On the other hand, Figure \ref{fig:RVDynamicLambda}(b) indicates that the lower bound of $\lambda$ must be chosen with care: for the SEP with $\lambda=0.842$, the relative value fell below 1 during September to October 2020, meaning that the SEP underperformed the market portfolio during this period.
	
	\begin{figure}[H]
		\centering
		\subfloat[Relative value trajectories of SEWP]{
			\includegraphics[width=0.45\linewidth]{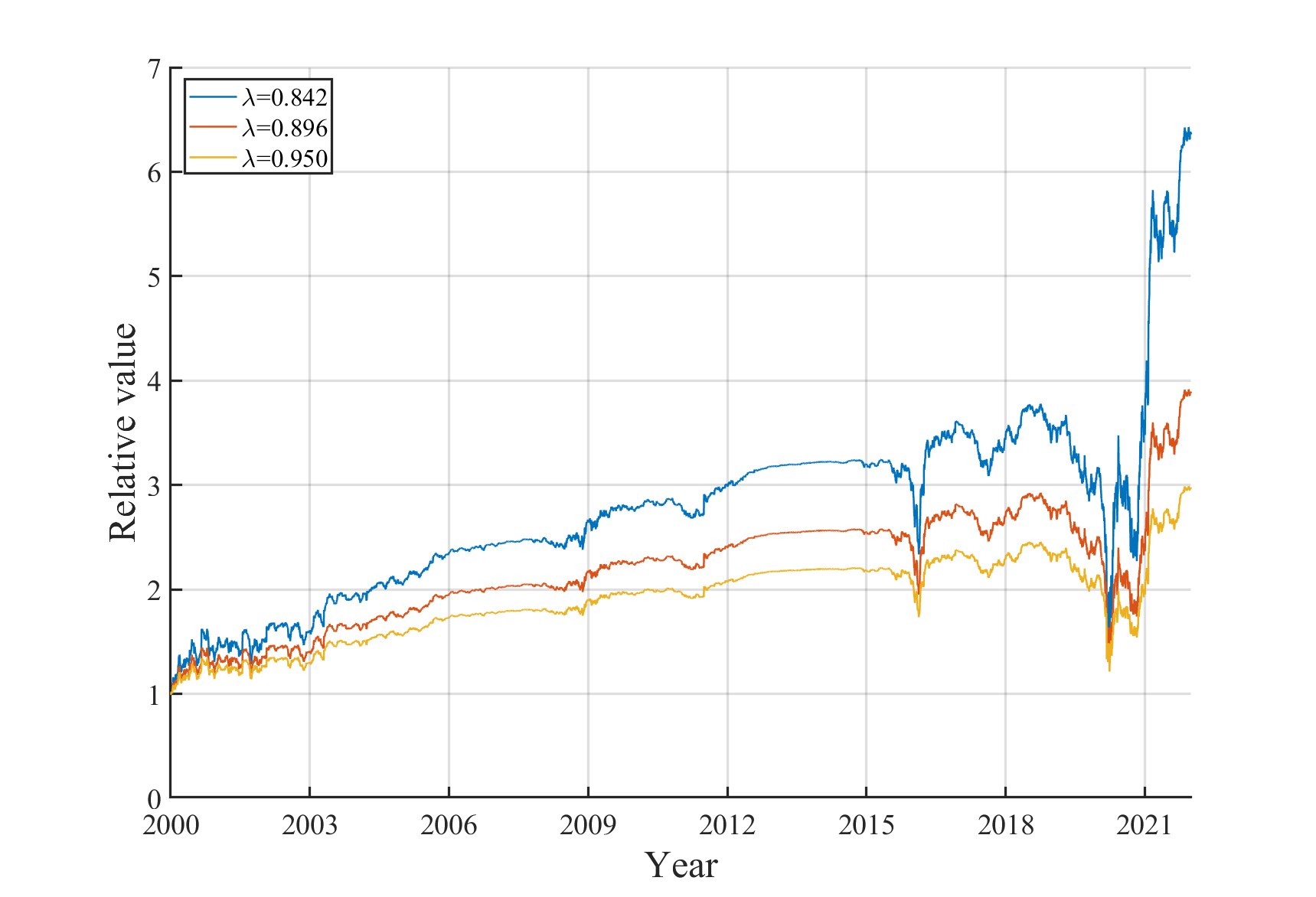}}
		\hspace{0in}
		\subfloat[Relative value trajectories of SEP]{
			\includegraphics[width=0.45\linewidth]{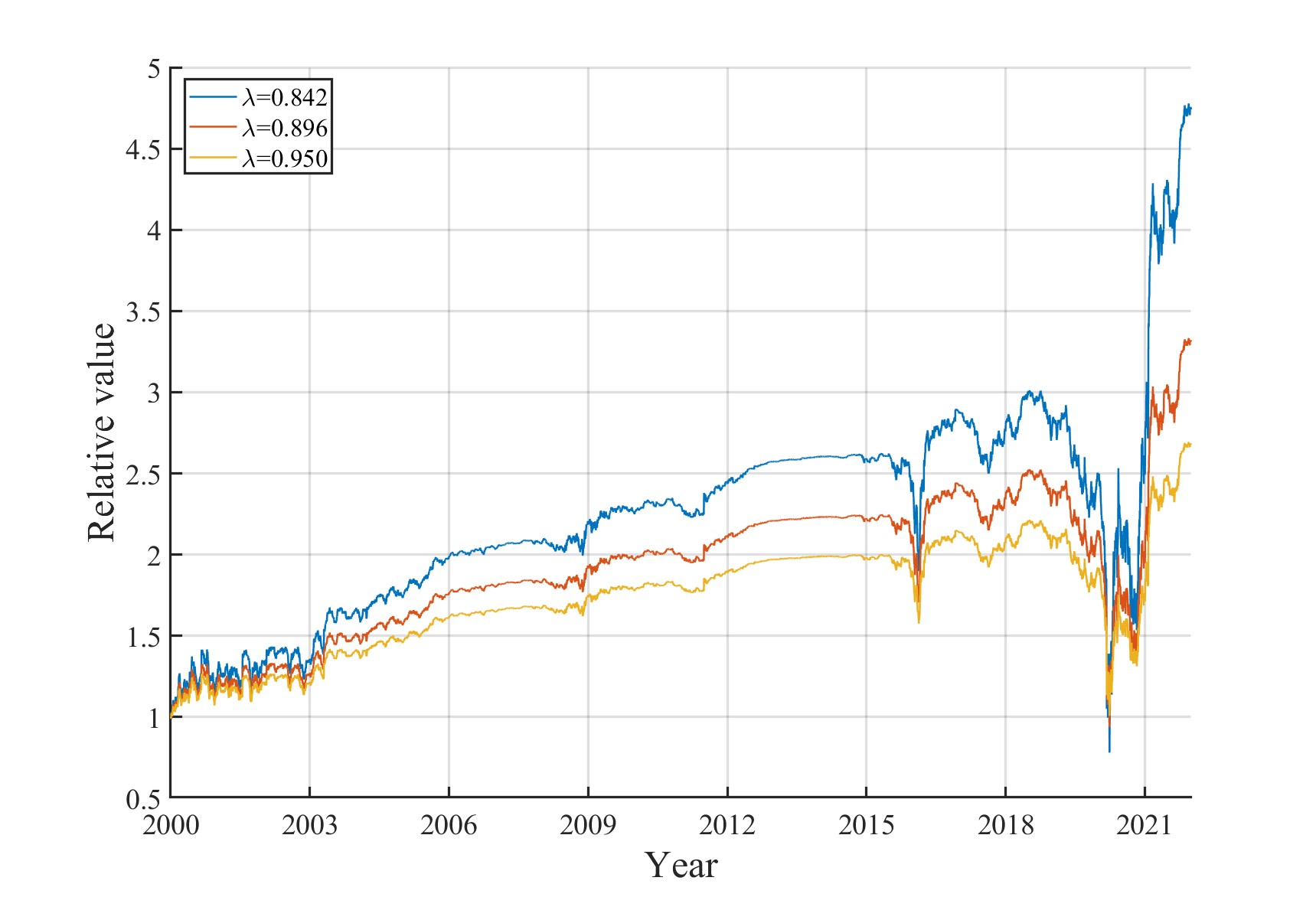}}
		\caption{Relative value trajectories of shrunken portfolios with various scaling parameter $\lambda$.}
		\label{fig:RVDynamicLambda}
	\end{figure}

	\section{Conclusion}
	This paper develops a geometric framework for constructing and analyzing admissible short-selling strategies within discrete-time Stochastic Portfolio Theory. By relaxing the domain of functional generation to a local market state space $K$, we extend the theory to a broader class of \emph{bankruptcy-proof} portfolios. This generalization demonstrates that the long-only restriction often imposed in the discrete-time literature is not intrinsic to the arbitrage mechanism, but rather a structural consequence of requiring the portfolio map to be well defined on the entire simplex.
	
	Our theoretical analysis yields three key insights.
	First, we established that pseudo-arbitrage is characterized by concavity of the generating function on $K$, which frees the construction from global constraints.
	Second, we characterized the presence of short selling by introducing the \emph{maximal concave extension}. We proved that short selling occurs if and only if this extension takes negative values, a phenomenon driven by the steepness of the logarithmic gradient near a zero boundary.
	Third, we identified a \emph{geometric phase transition}, where the interplay between symmetry of the generating potential and its boundary behavior gives rise to a long-only core and a short-selling region under suitable geometric conditions. These regions are described qualitatively and are not asserted to form an exact partition of the domain.
	
	To bridge the gap between theoretical existence and practical construction, we introduced the \emph{barycentric scaling transformation}. This methodology provides a systematic tool for engineering admissible strategies by contracting classical generating functions. Our examples illustrate a robust geometric principle: the structure of the zero set of the generating function on the boundary of the generating domain has direct implications for the geometry of short-selling exposure.
	Specifically, generating functions vanishing on the entire boundary induce a broad short-selling shell (as in the SEWP), whereas those vanishing only at vertices restrict short-selling exposure to localized regions (as in the SEP).
	
	These findings offer a unified geometric perspective on relative arbitrage, treating long-only and long-short strategies not as distinct categories, but as continuous variations controlled by the domain of definition. Future research may focus on dynamic calibration of the scaling parameter to adapt to changing volatility regimes, extending this local geometric framework to settings with transaction costs.

\newpage
\bibliographystyle{plain}
\bibliography{refs}
\end{document}